\numberwithin{equation}{section}
\numberwithin{figure}{section}
\theoremstyle{plain}
\newtheorem{thm}{\protect\theoremname}[section]
\theoremstyle{remark}
\newtheorem{rem}[thm]{\protect\remarkname}
\theoremstyle{plain}
\newtheorem{prop}[thm]{\protect\propositionname}
\theoremstyle{remark}
\newtheorem{notation}[thm]{\protect\notationname}
\theoremstyle{plain}
\newtheorem{lem}[thm]{\protect\lemmaname}
\theoremstyle{remark}
\newtheorem*{acknowledgement*}{\protect\acknowledgementname}
\providecommand{\acknowledgementname}{Acknowledgement}
\providecommand{\lemmaname}{Lemma}
\providecommand{\notationname}{Notation}
\providecommand{\propositionname}{Proposition}
\providecommand{\remarkname}{Remark}
\providecommand{\theoremname}{Theorem}
\begin{document}
 % Preview body

\title{Scaling Limit of Small Random Perturbation of Dynamical Systems}
\author{Fraydoun Rezakhanlou and Insuk Seo}
\address{Department of Mathematics \\
 University of California \\
 Berkeley, CA 94720-3840}
\email{rezakhan@math.berkeley.edu}
\address{Department of Mathematical Science\\
 Seoul National University\\
 Seoul, South Korea}
\email{insuk.seo@snu.ac.kr}
\begin{abstract}
In this article, we prove that a small random perturbation of dynamical
system with multiple stable equilibria converges to a Markov chain
whose states are neighborhoods of the deepest stable equilibria, under
a suitable time-rescaling, provided that the perturbed dynamics is
reversible in time. Such a result has been anticipated from 1970s,
when the foundation of mathematical treatment for this problem has
been established by Freidlin and Wentzell but the
process level convergence remains open for a long time.  We solve
this problem by reducing the entire analysis to an investigation of
the solution of an associated Poisson equation, and furthermore provide
a method to carry out this analysis by using well-known test functions
in a novel manner.
\end{abstract}

\maketitle

\section{\label{sec1}Introduction}

Dynamical systems that are perturbed by small random noises are known
to exhibit {\em metastable} behavior. There have been numerous
progresses in the last two decades on the rigorous verification of
metastability for a class of models that are collectively known as
{\em Small Random Perturbation of Dynamical System (SRPDS)}. In
this introductory section, we briefly review some of the existing
results on SRPDS, and describe the main contribution of this article.
We refer to a classical monograph \cite{FW1} and a recent monograph
\cite{BdH} for the comprehensive discussion on the metastable behavior
of the SRPDS.

\subsection{Small random perturbation of dynamical systems: historical review}

Consider a dynamical system given by the ordinary differential equation
in $\mathbb{R}^{d}$
\begin{equation}
d\boldsymbol{x}(t)=b(\boldsymbol{x}(t))dt\;,\label{e11}
\end{equation}
where $b:\mathbb{R}^{d}\rightarrow\mathbb{R}^{d}$ is a smooth vector
field. Suppose that this dynamical system owns multiple stable equilibria
as illustrated in Figure \ref{fig0}, and consider the random dynamical
system obtained by perturbing \eqref{e11} with a small Brownian noise.
Such a random dynamical system is defined by a stochastic differential
equation of the form
\begin{equation}
d\boldsymbol{x}_{\epsilon}(t)=b(\boldsymbol{x}_{\epsilon}(t))dt+\sqrt{2\epsilon}\,d\boldsymbol{w}_{t}\;\;;\;t\ge0\;,\label{e12}
\end{equation}
where $(\boldsymbol{w}_{t}:t\ge0)$ is the standard $d$-dimensional
Brownian motion, and $\epsilon>0$ is a small positive parameter representing
the magnitude of the noise. Suppose now that the diffusion process
$\boldsymbol{x}_{\epsilon}(t)$ starts from a neighborhood of a stable
equilibrium of the unperturbed dynamics \eqref{e11}. Then, because
of the small random noise, one can expect that the perturbed dynamics
\eqref{e12} exhibits a rare transition from this starting neighborhood
to another one around different stable equilibrium. This is a typical
metastable or tunneling transition and its quantitative analysis was
originated from Freidlin and Wentzell \cite{FW1,FW2,FW3}. However,
beyond the large-deviation type estimate that was obtained by Freidlin
and Wentzell (explained below), not much is known about the precise
nature of the metastable behavior of the model \eqref{e12}, unless
the drift $b$ is a gradient vector field. For instance, we do not
know of any sharp asymptotic for the expectation of the metastable
transition time.

\begin{figure}
\includegraphics[scale=0.3]{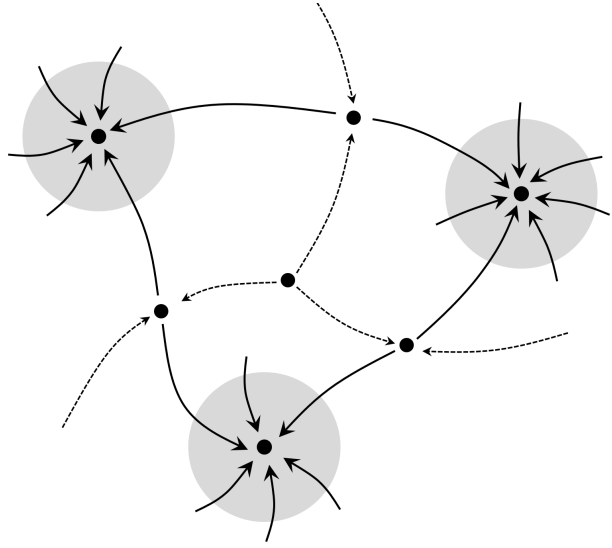}\caption{{The flow chart of the dynamical systems $d\boldsymbol{x}(t)=-b(\boldsymbol{x}(t))dt$
with three stable equilibria. There are four unstable equilibria as
well.} \label{fig0}}
\end{figure}

\subsection{Small random perturbation of dynamical systems: gradient model}

Suppose that the vector field $b$ in \eqref{e12} can be expressed
as $b=-\nabla U$, for a smooth {\em potential} function $U:\mathbb{R}^{d}\rightarrow\mathbb{R}$.
In other words, the stochastic differential equation \eqref{e12}
is of the form
\begin{equation}
d\boldsymbol{x}_{\epsilon}(t)=-\nabla U(\boldsymbol{x}_{\epsilon}(t))dt+\sqrt{2\epsilon}\,d\boldsymbol{w}_{t}\;;\;t\ge0\;.\label{e13}
\end{equation}
In particular, if the function $U(\cdot)$ has several local minima
as illustrated in Figure \ref{fig0}, then the dynamical system associated
with the unperturbed equation $d\boldsymbol{x}(t)=-\nabla U(\boldsymbol{x}(t))dt$,
has multiple stable equilibria, and hence the diffusion process $(\boldsymbol{x}_{\epsilon}(t):t\ge0)$
is destined to exhibit a metastable behavior.

In order to explain some of the classical results obtained in \cite{FW1,FW2}
by Freidlin and Wentzell in its simplest form, let us assume that
$U$ is a double-well potential. That is, the function $U$ has exactly
two local minima $\boldsymbol{m}_{1}$ and $\boldsymbol{m}_{2}$,
and a saddle point $\boldsymbol{\sigma}$ between them, as illustrated
in Figure \ref{fig1}-(left). For such a choice of $U$, the diffusion
$\boldsymbol{x}_{\epsilon}$, wanders mostly in one of the two {\em
potential wells} surrounding $\boldsymbol{m}_{1}$ and $\boldsymbol{m}_{2}$,
and occasionally makes transitions from one well to the other. To
understand the metastable nature of $\boldsymbol{x}_{\epsilon}$ qualitatively,
we analyze the asymptotic behavior of the transition time of $\boldsymbol{x}_{\epsilon}$
between the two potential wells. Writing $\tau_{\epsilon}$ for the
time that it takes for $\boldsymbol{x}_{\epsilon}(t)$ to reach a
small ball around $\boldsymbol{m}_{2}$, we wish to estimate the mean
transition time $\mathbb{E}_{\boldsymbol{m}_{1}}^{\epsilon}[\tau_{\epsilon}]$,
where $\mathbb{E}_{\boldsymbol{m}_{1}}^{\epsilon}$ denotes the expectation
with respect to the law of $\boldsymbol{x}_{\epsilon}(t)$ starting
from $\boldsymbol{m}_{1}$. Freidlin and Wentzell in \cite{FW1,FW2}
establishes a large-deviation type estimate of the form
\begin{equation}
\log\mathbb{E}_{\boldsymbol{m}_{1}}^{\epsilon}[\tau_{\epsilon}]\;\simeq\;\frac{U(\boldsymbol{\sigma})-U(\boldsymbol{m}_{1})}{\epsilon}\;\;\mbox{ as }\epsilon\to0\;.\label{e14}
\end{equation}
For the precise metastable behavior of $\boldsymbol{x}_{\epsilon}$,
we need to go beyond \eqref{e14} and evaluate the low $\epsilon$
limit of
\[
\mathbb{E}_{\boldsymbol{m}_{1}}^{\epsilon}[\tau_{\epsilon}]\ \exp\left\{ -\frac{U(\boldsymbol{\sigma})-U(\boldsymbol{m}_{1})}{\epsilon}\right\} \ .
\]
This was achieved by Bovier et. al. in \cite{BEGK2} by verifying
a classical conjecture of Eyring \cite{Ey} and Kramers \cite{Kra}.
By developing a robust methodology which is now known as {\em the
potential theoretic approach}, Bovier et. al. derive an Eyring-Kramers
type formula in the form
\begin{equation}
\mathbb{E}_{\boldsymbol{m}_{1}}^{\epsilon}[\tau_{\epsilon}]\simeq\frac{2\pi}{\lambda_{\sigma}}\,\sqrt{\frac{-\det(\nabla^{2}U)(\boldsymbol{\sigma})}{\det(\nabla^{2}U)(\boldsymbol{m}_{1})}}\,\exp\left\{ \frac{U(\boldsymbol{\sigma})-U(\boldsymbol{m}_{1})}{\epsilon}\right\} \;\;\mbox{ as }\epsilon\to0\;,\label{e15}
\end{equation}
provided that the Hessians of $U$ at $\boldsymbol{m}_{1},\,\boldsymbol{m}_{2},$
and $\boldsymbol{\sigma}$ are non-degenerate, $(\nabla^{2}U)(\boldsymbol{\sigma})$
has a unique negative eigenvalue $-\lambda_{\boldsymbol{\sigma}}$,
and some additional technical assumptions on $U$ (corresponding to
\eqref{gc} and \eqref{tc} of the current paper) are valid. It is
also verified in the same work that $\tau_{\epsilon}/\mathbb{E}_{\boldsymbol{m}_{1}}^{\epsilon}[\tau_{\epsilon}]$
converges to the mean-one exponential random variable. Similar formulas
can be derived when $U$ has multiple local minima as in Figure \ref{fig1}
(right).

\begin{figure}
\includegraphics[scale=0.19]{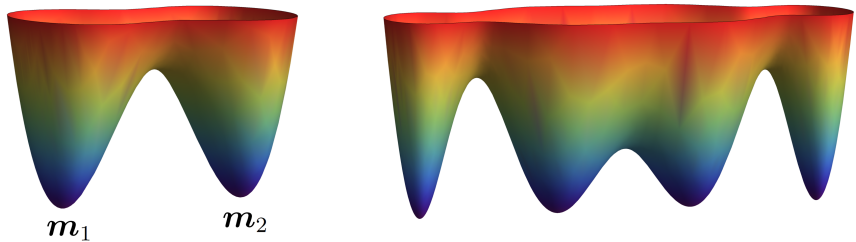}\caption{Potential $U$ with two global minima $\boldsymbol{m}_{1}$ and $\boldsymbol{m}_{2}$
(left) and multiple global minima (right). \label{fig1}}
\end{figure}

\subsection{Main result}

We starts with an informal explanation of our main result when $U$
is a double-well potential with $U(\boldsymbol{m}_{1})=U(\boldsymbol{m}_{2})$.
Heuristically speaking, the process starting from a neighborhood of
$\boldsymbol{m}_{1}$ makes a transition to that of $\boldsymbol{m}_{2}$
after an exponentially long time, as suggested by \eqref{e14}. After
spending another exponentially long time, the process makes a transition
back to the neighborhood of $\boldsymbol{m}_{1}$. These tunneling-type
transitions take place repeatedly and may be explained in terms of
a Markov chain among two valleys around $\boldsymbol{m}_{1}$ and
$\boldsymbol{m}_{2}$. More generally, if $U$ has several global
minima as in Figure \ref{fig1} (right), then the successive inter-valley
dynamics seems to be approximated by a Markov chain whose states are
the deepest valleys of $U$. In spite of the appeal of the above heuristic
description, and its consistency with \eqref{e14}, its rigorous verification
for our process \eqref{e13} was not known before. In the main result
of the current paper (Theorem \ref{main}), we show that after a rescaling
of time, a finite state Markov chain governs the inner-valley dynamics
of $\boldsymbol{x}_{\epsilon}$.

\subsection{Methodology}

The most natural way to describe the inter-valley dynamics of metastable
random processes is the reduction of the model to a continuous time
Markov process (cf. \cite{BL1,BL2,new DLLN review}). Namely, we try
to demonstrate that a suitable scaling limit of the metastable random
processes are governed by finite state Markov chains whose jump rates
are evaluated with the aid of Eyring-Kramers type formulas.

Recently, there have been numerous active researches toward this direction,
especially when the underlying metastable process lives in a discrete
space. Beltran and Landim in \cite{BL1,BL2} provide a general framework,
known as the \textit{martingale approach} to obtain the scaling limit
of metastable Markov chains. This method is quite robust and has been
applied to a wide scope of metastable processes including the condensing
zero-range processes \cite{AGL,BL3,Lan2,S}, the condensing simple
inclusion processes \cite{BDG,GRV2,Kim-Seo2}, the random walks in
potential fields \cite{LMT,LS1}, and the Potts models \cite{Kim-Seo1,LS2,NZ}.

The method of Beltran and Landim relies on a careful analysis of the
so-called trace process. A trace process is obtained from the original
process by turning off the clock when the process is not in a suitable
neighborhood of a stable equilibrium. However, as Landim pointed out
in \cite{Lan3}, it is not clear how to apply this methodology when
the underlying metastable process is a diffusion. In this paper, instead
of modifying the approach outlined in \cite{BL1,BL2}, we appeal to
an entirely new method that is a refinement of a scheme that was utilized
in \cite{ET,ST}.

We establish the metastable behavior of our diffusion $\boldsymbol{x}_{\epsilon}$
by analyzing the solutions of certain classes of Poisson equations
related to its infinitesimal generator. Theorem \ref{t51} is the
main step of our approach and will play an essential role in the proof
of our main result Theorem \ref{main}. The proof of Theorem \ref{t51}
is to some extent model-dependent, though the deduction of the main
result from this Theorem is robust and applicable to many other examples.
{For instance, the method originally developed in this
article is successfully applied to various models, e.g., the special
case of general dynamics defined by \eqref{e12} in \cite{newLS2}
and the critical zero-range processes in \cite{newLMS1}. }

{We remark that both the martingale approach and the
approach developed in the current article concern on the }\textit{{process
level convergence}}{{} in the model reduction via Markov
process. For the other sense of convergence, we refer to \cite{new DLLN review,new DLLN2}
for the approach based on the quasi-stationary distribution, and for
\cite{Su1} for the convergence in the sense of finite dimensional
marginals. 

\subsection{Non-gradient model}

As we mentioned earlier, except for the exponential estimate similar
to \eqref{e14}, the analog of \eqref{e15} is not known for the general
case \eqref{e12}. Even for \eqref{e14}, the term $U(\boldsymbol{\sigma})-U(\boldsymbol{m}_{1})$
on the right-hand side is replaced with the so-called {\em quasi-potential}
$V(\boldsymbol{\sigma};\boldsymbol{m}_{1})$. For the sake of comparison,
let us describe three simplifying features of the diffusion \eqref{e13}
that play essential roles in our work:
\begin{itemize}
\item The quasi-potential function governing the rare behaviors of the process
\eqref{e13} is given by $U$. In general, the quasi-potential $V$
is given by a variational principle in a suitable function space.
For the metastability questions, we need to study the regularity of
this quasi-potential that in general is a very delicate issue.
\item The diffusion $\boldsymbol{x}_{\epsilon}$ of the equation \eqref{e13}
admits an invariant measure with a density of the form $Z_{\epsilon}^{-1}\exp\left\{ -U/\epsilon\right\} $.
For the general case, no explicit formula for the invariant measure
is expected. The invariant measure density is specified as the unique
solution of an elliptic PDE associated with the adjoint of the generator
of \eqref{e12}.
\item The diffusion $\boldsymbol{x}_{\epsilon}$ of the equation \eqref{e13}
is reversible with respect to its invariant measure. This is no longer
the case for non-gradient models.
\end{itemize}
The main tool for proving the Eyring-Kramers formula for the gradient
model \eqref{e12} in \cite{BEGK2} is the potential theory associated
with reversible processes. Of course the special form of the invariant
measure is also critically used, and hence its extension to general
case requires non-trivial additional work. Recently, in \cite{LMS}
a potential theory for non-reversible processes is obtained, and accordingly
the Eyring-Kramers formula is extended to a class of non-reversible
diffusions with Gibbsian invariant measures.
{Moreover,
in \cite{newLS1}, the Eyring-Kramers formula for the non-gradient
model \eqref{e12} when $\boldsymbol{b}$ can be written as $\boldsymbol{b}=\nabla U+\boldsymbol{\ell}$
where $\boldsymbol{\ell}:\mathbb{R}^{d}\rightarrow\mathbb{R}^{d}$
is a smooth vector field orthogonal to $\nabla U$ and divergence-free,
i.e., $\nabla\cdot\boldsymbol{\ell}\equiv0$. These results offers
a meaningful advance to the general case.}

The current work can be regarded as an entirely new alternative approach
to the general case. Comparing to previous approaches, the main difference
of ours is the fact that we do not rely on potential theory, especially
the estimation of the capacity. Hence our approach does not rely on
the reversibility of the process $\boldsymbol{x}_{\epsilon}$. Keeping
in mind that one of main challenge of the non-reversible case is the
estimation of the capacity between valleys, the methodology adopted
in the current paper appears to be well-suited for treating non-reversible
models. This possibility is partially verified in \cite{LS3} by Landim
and an author of the current paper. In this work, the scaling limit
for the diffusion $\boldsymbol{x}_{\epsilon}$ of the equation \eqref{e12}
on a circle is obtained. It is worth mentioning that in the case of
a circle, many simplifications and explicit computations are available.
Nonetheless, the results of \cite{LS3} demonstrates that the Eyring-Kramers'
formula as well as the limiting Markov chain are very different from
the reversible case, and many peculiar features are observed.

\subsection{Related works}
{
We end the introduction with an overview of related works. As was
already explored in Bovier et al. \cite{newBGK}, the the mean transition
time $\mathbb{E}^{\epsilon}\tau_{\epsilon}$ starting from a local
minima is related to the exponentially small eigenvalues of the infinitesimal
generator $\mathcal{L}_{\epsilon}$ of the diffusion $\boldsymbol{x}_{\epsilon}$.
In particular the reciprocal of right-hand of \eqref{e15} should
serve as an asymptotic representation of the spectral gap of the operator
$\mathcal{L}_{\epsilon}$. This suggests a strategy for verifying
Eyring-Kramer formula via a Poincar\'{e} inequality for the invariant
measure of $\mathcal{L}_{\epsilon}$. This has been successfully employed
by Menz and Schlichting in \cite{newMS}. More importantly, the corresponding
\textit{logarithmic Sobolev inequality} is also valid as has been
shown in the same paper \cite{newMS}.
Indeed, the connection between the small eigenvalues of $\mathcal{L}_{\epsilon}$
to those of the corresponding \textit{Witten Laplacian} has been explored
to derive various refinements of Eyring-Kramer formula. The first
important step in this connection was taken by Helffer, Klein and
Nier \cite{newHKN1,newHKN2,newHKN3} who deduced the Eyring-Kramer
formula and WKB type asymptotic with the aid of semiclassical analysis
(see also \cite{newBerg} for an overview). Furthermore, the associated
eigenfunction can be used to build a local quasi stationary measure
as have been extensively studied by De Gesu et al. in \cite{new DLLN2}.
Most notably, a precise asymptotic analysis of the eigenfunction in
\cite{new DLLN2} leads to an exact asymptotic for the law of $\boldsymbol{x}_{\epsilon}(\tau_{\epsilon})$
(See also \cite{new DLLN review} for an overview). We also refer
to Berglund, Di Ges\`{u}, and Weber \cite{newBDW} where an Eyring-Kramers
type formula has been derived for the stochastic Allen-Cahn equation.
}
\section{\label{sec2}Model and Main result}

Our main interest in this paper is the metastable behavior of the
diffusion process \eqref{e13} when the potential function $U$ has
multiple global minima. In Section \ref{s21}, we explain basic assumptions
on $U$ and the geometric structure of its graph related to the metastable
valleys and saddle points between them. In Section \ref{s22} some
elementary results about the invariant measure of the process \eqref{e13}
is recalled. Finally, in Section \ref{s23} we describe the main result
of the paper, which is a convergence theorem for the metastable process
\eqref{e13}. We remark that the presentation and the result in the
current section are similar to a discrete counterpart model considered
in \cite{LMT}, though our proof of the main result is entirely different
from the one that is presented therein.

\subsection{\label{s21}Potential function and its landscape}

We shall consider the potential function $U:\mathbb{R}^{d}\rightarrow\mathbb{R}$
that belongs to $C^{2}(\mathbb{R}^{d})$, satisfying the growth condition
\begin{equation}
\lim_{|\boldsymbol{x}|\rightarrow\infty}\frac{U(\boldsymbol{x})}{|\boldsymbol{x}|}=\infty\;,\label{gc}
\end{equation}
and the tightness condition
\begin{equation}
\int_{\{\boldsymbol{x}:U(\boldsymbol{x})\ge a\}}e^{-U(\boldsymbol{x})/\epsilon}dx\le C_{a}e^{-a/\epsilon}\;\;\text{for all }a\in\mathbb{R\;\text{and }}\epsilon\in(0,\,1]\;,\label{tc}
\end{equation}
where $C_{a}$, $a\in\mathbb{R}$, is a constant that depends on $a$,
but not on $\epsilon$. These two conditions are required to confine
the process $\boldsymbol{x}_{\epsilon}(t)$ in a compact region with
high probability.

The metastable behavior of our model critically depends on the graphical
structures of the level sets of the potential function $U$. To guarantee
the occurrence of a metastable behavior of the type we have described
in Section 1, we need to make some standard assumptions on $U$. We
refer to Figure \ref{fig2} for the visualization of some the notations
that appear in the rest of the current section.

\begin{figure}
\includegraphics[scale=0.22]{im2}\caption{\label{fig2}Shadow area represents $\Omega$. For this case $S=\{1,\,2,\,3,\,4,\,5\}$,
$\mathcal{S}=\{\boldsymbol{\sigma}_{1},\,\boldsymbol{\sigma}_{2},\,\boldsymbol{\sigma}_{3},\,\boldsymbol{\sigma}_{4},\,\boldsymbol{\sigma}_{5}\}$,
and $\mathcal{M}_{1}=\{\boldsymbol{m}_{1},\,\boldsymbol{m}_{2}\}$. }
\end{figure}

\subsubsection{Structure of the metastable wells}

Fix $H\in\mathbb{R}$ and let $\mathcal{S}=\{\boldsymbol{\sigma}_{1},\,\boldsymbol{\sigma}_{2},\,\cdots,\,\boldsymbol{\sigma}_{L}\}$
be the set of saddle points of $U$ with height $H$, i.e.,
\[
U(\boldsymbol{\sigma}_{1})=U(\boldsymbol{\sigma}_{2})=\cdots=U(\boldsymbol{\sigma}_{L})=H\;.
\]
Denote by $\mathcal{W}_{1},\,\cdots,\,\mathcal{W}_{K}$ the connected
components of the set
\begin{equation}
\Omega=\{\boldsymbol{x}:U(\boldsymbol{x})<H\}\;.\label{omega}
\end{equation}
Let us write $S=\{1,\,2,\,\cdots,\,K\}$. By the growth condition
\eqref{gc}, all the sets $\mathcal{W}_{i}$, $i\in S$, are bounded.
We assume that $\overline{\Omega}=\cup_{i\in S}\overline{\mathcal{W}}_{i}$
is a connected set, where $\overline{\mathcal{A}}$ represents the
topological closure of the set $\mathcal{A}\subset\mathbb{R}^{d}$.

Let $h_{i}$, $i\in S$, be the minimum of the function $U$ in the
well $\mathcal{W}_{i}$. We regard $H-h_{i}$ as the depth of the
well $\mathcal{W}_{i}$. Define
\begin{equation}
h=\min_{i\in S}h_{i}\label{eh}
\end{equation}
and let
\begin{equation}
S_{\star}=\{i\in S:h_{i}=h\}\subset S\;.\label{ess}
\end{equation}
Note that the collection $\{\mathcal{W}_{i}:\,i\in S_{\star}\}$ represents
the set of \textit{deepest} wells. The purpose of the current article
is to describe the metastable behavior of the diffusion process $\boldsymbol{x}_{\epsilon}(t)$
among these deepest wells. For a non-trivial result, we assume that
$|S_{\star}|\ge2$.
\begin{rem}
When the set $\overline{\Omega}$ is not connected, we can still apply
our result to each connected component to get the metastability among
the neighborhood of this component. In order to deduce the global
result instead, one must find a larger $H$ to unify the connected
components. Because of this, our assumptions are quite general. For
the details for such a multi-scale analysis, we refer to \cite{LMT,newLS2}.
\end{rem}

\begin{rem}
{If the set $\overline{\Omega}$ is not connected and
if we selected one of them, then $h$ may not be the global minimum
of $U$ and the sets $\{\mathcal{W}_{i}:\,i\in S_{\star}\}$ may not
be the deepest wells in the landscape of $U$. Hence, the method presented
in the current article can be applied to the inter-valley dynamics
between shallow wells as well. We refer to \cite{newLS2} for more
detail. }
\end{rem}

\subsubsection{Assumptions on the critical points of $U$}

For $i\in S$, define
\[
\mathcal{M}_{i}=\{\boldsymbol{m}\in\mathcal{W}_{i}:U(\boldsymbol{m})=h_{i}\}
\]
which represents the set of minima of $U$ in the set $\mathcal{W}_{i}$.
We assume that $\mathcal{M}_{i}$ is a finite set for all $i\in S$.
Define
\begin{equation}
\mathcal{M}=\bigcup_{i\in S}\mathcal{M}_{i}\;\;\text{and\;\;}\mathcal{M}_{\star}=\bigcup_{i\in S_{\star}}\mathcal{M}_{i}\;,\label{mstar}
\end{equation}
so that the set $\mathcal{M}_{\star}$ denotes the set of global minima
of $U$. We assume that those critical points of $U$ that belong
to $\mathcal{M}_{\star}\cup\mathcal{S}$ are non-degenerate, i.e.,
the Hessian of $U$ is invertible at each point of $\mathcal{M}_{\star}\cup\mathcal{S}$.
Furthermore, we assume that the Hessian $(\nabla^{2}U)(\boldsymbol{\boldsymbol{\sigma}})$
has one negative eigenvalue and $(d-1)$ positive eigenvalues for
all $\boldsymbol{\sigma}\in\mathcal{S}$. These assumptions are standard
in the study of metastability (cf. \cite{BEGK2,LMS,LMT,LS1}). In
particular, they are satisfied if the function $U$ is a Morse function
.

\subsubsection{\label{s213}Metastable valleys}

Fix a small constant $a>0$ such that there is no critical point $\boldsymbol{c}$
of $U$ satisfying $U(\boldsymbol{c})\in[H-a,\,H)$. For $i\in S$,
denote by $\mathcal{W}_{i}^{o}$ the unique connected component of
the level set $\{\boldsymbol{x}:U(\boldsymbol{x})<H-a\}$ which is
a subset of $\mathcal{W}_{i}$. We write $\mathcal{B}(\boldsymbol{x},\,r)$
for the ball of radius $r>0$ centered at $\boldsymbol{x}\in\mathbb{R}^{d}$,
i.e.,
\begin{equation}
\mathcal{B}(\boldsymbol{x},\,r)=\{\boldsymbol{y}\in\mathbb{R}^{d}:|\boldsymbol{x}-\boldsymbol{y}|<r\}\;.\label{ball}
\end{equation}
Pick $r_{0}'$ and $r_{0}$ with $0<r_{0}<r'_{0}$. Assume that $r_{0}'$
is small enough so that the ball $\mathcal{B}(\boldsymbol{m},\,r'_{0})$
does not contain any critical points of $U$ other than $\boldsymbol{m}$,
and $\mathcal{B}(\boldsymbol{m},\,r'_{0})\subset\bigcup_{i\in S}\mathcal{W}_{i}^{o}$
for all $\boldsymbol{m}\in\mathcal{M}$. For $i\in S$, the metastable
valley corresponding to the well $\mathcal{W}_{i}$ is defined by
\begin{equation}
\mathcal{V}_{i}=\bigcup_{\boldsymbol{m}\in\mathcal{M}_{i}}\mathcal{B}(\boldsymbol{m},\,r_{0})\;.\label{ev}
\end{equation}
For our purposes, we need to consider a larger valley
\begin{equation}
\mathcal{V}'_{i}=\bigcup_{\boldsymbol{m}\in\mathcal{M}_{i}}\mathcal{B}(\boldsymbol{m},\,r'_{0})\;.\label{ev'}
\end{equation}
Finally, we write
\begin{equation}
\mathcal{V}_{\star}=\bigcup_{i\in S_{\star}}\mathcal{V}_{i}\;,\;\;\text{and\;\;}\Delta=\mathbb{R}^{d}\setminus\mathcal{V}_{\star}\;.\label{twoset}
\end{equation}

\subsection{\label{s22}Invariant measure}

The generator corresponding to the diffusion process $\boldsymbol{x}_{\epsilon}(t)$
of the equation \eqref{e13}, can be written as
\[
\mathscr{L}_{\epsilon}=\epsilon\,\Delta-\nabla U\cdot\nabla=\epsilon\,e^{U(\boldsymbol{x})/\epsilon}\,\nabla\cdot\left[e^{-U(\boldsymbol{x})/\epsilon}\nabla\right]\;.
\]
From this, it is not hard to show that the invariant measure for the
process $\boldsymbol{x}_{\epsilon}(\cdot)$ is given by
\begin{equation}
\mu_{\epsilon}(d\boldsymbol{x})=Z_{\epsilon}^{-1}e^{-U(\boldsymbol{x})/\epsilon}d\boldsymbol{x}:=\hat{\mu}_{\epsilon}(\boldsymbol{x})\ d\boldsymbol{x}\label{mu}
\end{equation}
where $Z_{\epsilon}$ is the partition function defined by
\[
Z_{\epsilon}=\int_{\mathbb{R}^{d}}e^{-U(\boldsymbol{x})/\epsilon}d\boldsymbol{x}<\infty\;.
\]
Notice that $Z_{\epsilon}$ is finite because of \eqref{tc}. Define
\begin{equation}
\nu_{i}=\sum_{\boldsymbol{m}\in\mathcal{M}_{i}}\frac{1}{\sqrt{\det(\nabla^{2}U)(\boldsymbol{m})}}\;\text{\;for}\;i\in S_{\star}\;\;\text{and\;\;}\nu_{\star}=\sum_{j\in S_{\star}}\nu_{j}\;.\label{nu}
\end{equation}
We state some asymptotic results for the partition function $Z_{\epsilon}$
and the invariant measure $\mu_{\epsilon}(\cdot)$. We write $o_{\epsilon}(1)$
for a term that vanishes as $\epsilon\rightarrow0$.
\begin{prop}
\label{p21}It holds that
\begin{align}
 & Z_{\epsilon}=(1+o_{\epsilon}(1))\,(2\pi\epsilon)^{d/2}\,e^{-h/\epsilon}\,\nu_{\star}\;,\label{ep3}\\
 & \mu_{\epsilon}(\mathcal{V}_{i})=(1+o_{\epsilon}(1))\,\frac{\nu_{i}}{\nu_{\star}}\;\;\text{for }i\in S_{\star}\;,\label{ep4}\\
 & \mu_{\epsilon}(\mathcal{V}'_{i})=(1+o_{\epsilon}(1))\,\frac{\nu_{i}}{\nu_{\star}}\;\;\text{for }i\in S_{\star}\;,\label{ep4'}\\
 & \mu_{\epsilon}(\Delta)=o_{\epsilon}(1)\;.\label{ep4''}
\end{align}
\end{prop}

\begin{proof}
By Laplace's method, we can deduce that, for $i\in S_{\star}$,
\begin{align}
\mu_{\epsilon}(\mathcal{V}_{i}) & =Z_{\epsilon}^{-1}\,(1+o_{\epsilon}(1))\,(2\pi\epsilon)^{d/2}\,e^{-h/\epsilon}\,\nu_{i}\;.\label{ep1}\\
\mu_{\epsilon}(\mathcal{V}'_{i}) & =Z_{\epsilon}^{-1}\,(1+o_{\epsilon}(1))\,(2\pi\epsilon)^{d/2}\,e^{-h/\epsilon}\,\nu_{i}\;.\label{ep1'}
\end{align}
On the other hand, by \eqref{tc}, we have
\begin{equation}
\mu_{\epsilon}(\Delta)=Z_{\epsilon}^{-1}\,o_{\epsilon}(1)\,\epsilon^{d/2}\,e^{-h/\epsilon}\;.\label{ep2}
\end{equation}
Now, \eqref{ep3} follows from \eqref{ep1} and \eqref{ep2} because
\[
1=\mu_{\epsilon}(\Delta)+\sum_{i\in S_{\star}}\mu_{\epsilon}(\mathcal{V}_{i})\;.
\]
Moreoever, \eqref{ep4}, \eqref{ep4'} and \eqref{ep4''} are obtained
by inserting \eqref{ep3} into \eqref{ep1}, \eqref{ep1'}, and \eqref{ep2},
respectively.
\end{proof}

\subsection{\label{s23}Main result}

The metastable behavior of the process $\boldsymbol{x}_{\epsilon}(t)$
is a consequence of its convergence to a Markov chain $\mathbf{y}(t)$
on $S_{\star}$ in a proper sense, as is explained in Section \ref{s233}
below. The Markov chain $\mathbf{y}(t)$ is defined in Section \ref{s232},
based on an auxiliary Markov chain $\mathbf{x}(t)$ on $S$ that is
introduced below.

\subsubsection{\label{s231}Markov chain $\mathbf{x}(t)$ on $S$}

For a saddle point $\boldsymbol{\sigma}\in\mathcal{S}$, we write
$-\lambda_{\boldsymbol{\sigma}}$ for the unique negative eigenvalue
of the Hessian $(\nabla^{2}U)(\boldsymbol{\sigma})$, and define
\[
\omega_{\boldsymbol{\sigma}}=\frac{\lambda_{\boldsymbol{\sigma}}}{2\pi\sqrt{-\det(\nabla^{2}U)(\boldsymbol{\sigma})}}\;.
\]
For distinct $i,\,j\in S$, let $\mathcal{S}_{i,\,j}$ be the set
of saddle points between wells $\mathcal{W}_{i}$ and $\mathcal{W}_{j}$
in the sense that
\[
\mathcal{S}_{i,\,j}=\overline{\mathcal{W}}_{i}\cap\overline{\mathcal{W}}_{j}\subset\mathcal{S}\;.
\]
Define
\[
\omega_{i,\,j}=\sum_{\boldsymbol{\sigma}\in\mathcal{S}_{i,\,j}}\omega_{\boldsymbol{\sigma}}\;.
\]
For convenience, we set $\omega_{i,\,i}=0$ for all $i\in S$. For
$i\in S$, we define
\[
\omega_{i}=\sum_{j\in S}\omega_{i,\,j}\;\;\text{and\;\;}\mu(i)=\omega_{i}/(\sum_{j\in S}\omega_{j})\;.
\]
We have $\omega_{i}>0$ since the set $\overline{\Omega}$ is connected
by our assumption. Denote by $\{\mathbf{x}(t):t\ge0\}$ the continuous
time Markov chain on $S$ whose jump rate from $i\in S$ to $j\in S$
is given by $\omega_{i,\,j}/\mu(i)$. For $i\in S$, denote by $\mathbf{P}_{i}$
the law of the Markov chain $\mathbf{x}(t)$ starting from $i$. Notice
that this Markov chain is reversible with respect to the probability
measure $\mu(\cdot)$. The generator $L_{\mathbf{x}}$ corresponding
to the chain $\mathbf{x}(t)$ can be written as,
\[
(L_{\mathbf{x}}\mathbf{f})(i)=\sum_{j\in S}\frac{\omega_{i,\,j}}{\mu(i)}\left[\mathbf{f}(j)-\mathbf{f}(i)\right]\;\;;\;i\in S\;,
\]
for $\mathbf{f}\in\mathbb{R}^{S}$. Define, for $\mathbf{f}$, $\mathbf{g}\in\mathbb{R}^{S}$,
\begin{equation}
D_{\mathbf{x}}(\mathbf{f},\,\mathbf{g})=\sum_{i\in S}\mu(i)\,\mathbf{f}(i)(-L_{\mathbf{x}}\mathbf{g})(i)=\frac{1}{2}\sum_{i,\,j\in S}\omega_{i,\,j}\left[\mathbf{f}(j)-\mathbf{f}(i)\right]\left[\mathbf{g}(j)-\mathbf{g}(i)\right]\;.\label{dxfg}
\end{equation}
Then, $D_{\mathbf{x}}(\mathbf{f},\,\mathbf{f})$ represents the Dirichlet
form associated with the chain $\mathbf{x}(t)$.

Now we define the equilibrium potential and the capacity corresponding
to the chain $\mathbf{x}(t)$. For $A\subseteq S$, denote by $H_{A}$
the hitting time of the set $A$, i.e., $H_{A}=\inf\{t\ge0:\mathbf{x}(t)\in A\}$.
For two non-empty disjoint subsets $A$ and $B$ of $S$, define a
function $\mathbf{h}_{A,\,B}:S\rightarrow[0,\,1]$ by
\begin{equation}
\mathbf{h}_{A,\,B}(i)=\mathbf{P}_{i}(H_{A}<H_{B})\;.\label{epx}
\end{equation}
The function $\mathbf{h}_{A,\,B}$ is called the equilibrium potential
between two sets $A$ and $B$ with respect to the Markov chain $\mathbf{x}(t)$.
One of the notable fact about the equilibrium potential is that, $\mathbf{h}_{A,\,B}$
can be characterized as the unique solution of the following equation:
\begin{equation}
\begin{cases}
(L_{\mathbf{x}}\mathbf{h}_{A,\,B})(i)=0 & \text{ for all }i\in(A\cup B)^{c}\;,\\
\mathbf{h}_{A,\,B}(a)=1 & \text{ for all }a\in A\;,\\
\mathbf{h}_{A,\,B}(b)=0 & \text{ for all }b\in B\;.
\end{cases}\label{epx1}
\end{equation}
The capacity between these two sets $A$ and $B$ is now defined as
\[
\textup{cap}_{\mathbf{x}}(A,\,B)=D_{\mathbf{x}}(\mathbf{h}_{A,\,B},\,\mathbf{h}_{A,\,B})\;.
\]

\subsubsection{\label{s232}Markov chain $\mathbf{y}(t)$ on $S_{\star}$}

For distinct $i,\,j\in S_{\star}$, define
\begin{equation}
\beta_{i,\,j}=\frac{1}{2}\left[\textup{cap}_{\mathbf{x}}(\{i\},\,S_{\star}\setminus\{i\})+\textup{cap}_{\mathbf{x}}(\{j\},\,S_{\star}\setminus\{j\})-\textup{cap}_{\mathbf{x}}(\{i,\,j\},\,S_{\star}\setminus\{i,\,j\})\right]\label{beta}
\end{equation}
and set $\beta_{i,\,i}=0$ for all $i\in S_{\star}$. Note that $\beta_{i,\,j}=\beta_{j,\,i}$
for all $i,\,j\in S$. Recall $\nu_{i}$ from \eqref{nu} and let
$\{\mathbf{y}(t):t\ge0\}$ be a continuous time Markov chain on $S_{\star}$
whose jump rate from $i\in S_{\star}$ to $j\in S_{\star}$ is given
by $\beta_{i,\,j}/\nu_{i}$. Denote by $\mathbf{Q}_{i}$, $i\in S_{\star}$,
the law of Markov chain $\mathbf{y}(t)$ starting from $i$. Notice
that the probability measure $\mu_{\star}$ on $S_{\star}$, defined
by
\begin{equation}
\mu_{\star}(i)=\frac{\nu_{i}}{\nu_{\star}}\;\text{\;\;for}\;i\in S_{\star}\label{inv}
\end{equation}
is the invariant measure for the Markov chain $\mathbf{y}(t)$. For
$\mathbf{f}\in\mathbb{R}^{S_{\star}},$ the generator $L_{\mathbf{y}}$
corresponding to the Markov chain $\mathbf{y}(t)$ is given by
\[
(L_{\mathbf{y}}\mathbf{f})(i)=\sum_{j\in S_{\star}}\frac{\beta_{i,\,j}}{\nu_{i}}\left[\mathbf{f}(j)-\mathbf{f}(i)\right]\;\;;\;i\in S_{\star}\;.
\]
Similar to \eqref{dxfg}, we define, for $\mathbf{f}$, $\mathbf{g}\in\mathbb{R}^{S_{\star}}$,

\begin{align*}
D_{\mathbf{y}}(\mathbf{f},\,\mathbf{g}) & =\sum_{i\in S}\frac{\nu_{i}}{\nu_{\star}}\mathbf{f}(i)(-L_{\mathbf{y}}\mathbf{g})(i)=\frac{1}{2\nu_{\star}}\sum_{i,\,j\in S_{\star}}\beta_{i,\,j}\left[\mathbf{f}(j)-\mathbf{f}(i)\right]\left[\mathbf{g}(j)-\mathbf{g}(i)\right]\;.
\end{align*}
We acknowledge here that a similar construction has been carried out
in \cite{Su1} at which a sharp asymptotics of the low-lying spectra
of the metastable diffusions on $\sigma$-compact Riemannian manifold
has been carried out for special form of the potential function $U$.

\subsubsection{\label{s233}Main result}

It is anticipated from \eqref{e15} that the time scale corresponding
to the metastable transition is given by
\begin{equation}
\theta_{\epsilon}=e^{(H-h)/\epsilon}\;.\label{theta}
\end{equation}
Define the rescaled process $\{\widehat{\boldsymbol{x}}_{\epsilon}(t):t\ge0\}$
as of $\boldsymbol{x}_{\epsilon}(t)$
\[
\widehat{\boldsymbol{x}}_{\epsilon}(t)=\boldsymbol{x}_{\epsilon}(\theta_{\epsilon}t)\;.
\]
We now define the trace process $\boldsymbol{y}^{\epsilon}(t)$ of
$\widehat{\boldsymbol{x}}^{\epsilon}(t)$ inside $\mathcal{V}_{\star}$.
To this end, define the total time spent by $\left(\widehat{\boldsymbol{x}}_{\epsilon}(s):\ s\in[0,t]\right)$
in the valley $\mathcal{V}_{\star}$ as
\[
T^{\epsilon}(t)=\int_{0}^{t}\chi_{\mathcal{V}_{\star}}(\widehat{\boldsymbol{x}}_{\epsilon}(s))ds\;\;;\;t\ge0\;,
\]
where the function $\chi_{\mathcal{A}}:\mathbb{R}^{d}\rightarrow\{0,\,1\}$
represents the characteristic function of $\mathcal{A}\subseteq\mathbb{R}^{d}$.
Then, define
\begin{equation}
S^{\epsilon}(t)=\sup\{s\ge0:T^{\epsilon}(s)\le t\}\;\;;\;t\ge0\;,\label{Se}
\end{equation}
which is the generalized inverse of the increasing function $T^{\epsilon}(\cdot)$.
Finally, the trace process of $\widehat{\boldsymbol{x}}_{\epsilon}(t)$
in the set $\mathcal{V}_{\star}$ is defined by
\begin{equation}
\boldsymbol{y}_{\epsilon}(t)=\widehat{\boldsymbol{x}}_{\epsilon}(S^{\epsilon}(t))\;\;;\;t\ge0\;.\label{e241}
\end{equation}
One can readily verify that $\boldsymbol{y}_{\epsilon}(t)\in\mathcal{V}_{\star}$
for all $t\ge0$. Define a projection function $\Psi:\mathcal{V}_{\star}\rightarrow S_{\star}$
by
\begin{equation}
\Psi(\boldsymbol{x})=\sum_{i\in S_{\star}}i\,\chi_{\mathcal{V}_{i}}(\boldsymbol{x)}\;.\label{proj}
\end{equation}
Since $\boldsymbol{y}_{\epsilon}(t)$ is always in the set $\mathcal{V}_{\star}$,
the following process is well-defined:
\begin{equation}
\mathbf{y}_{\epsilon}(t)=\Psi(\boldsymbol{y}_{\epsilon}(t))\;\;;\;t\ge0\;.\label{e242}
\end{equation}
The process $\mathbf{y}_{\epsilon}(t)$ represents the index of the
valley in which the process $\boldsymbol{y}_{\epsilon}(t)$ is residing.
Denote by $\mathbb{P}_{\boldsymbol{x}}^{\epsilon}$ and $\mathbb{\widehat{P}}_{\boldsymbol{x}}^{\epsilon}$
the law of processes $\boldsymbol{x}_{\epsilon}(\cdot)$ and $\widehat{\boldsymbol{x}}_{\epsilon}(\cdot)$
starting from $\boldsymbol{x}\in\mathbb{R}^{d}$, respectively, and
denote by $\mathbb{E}_{\boldsymbol{x}}^{\epsilon}$ and $\mathbb{\widehat{E}}_{\boldsymbol{x}}^{\epsilon}$
the corresponding expectations. For $\boldsymbol{x}\in\mathcal{V}_{\star}$,
denote by $\mathbf{Q}_{\boldsymbol{x}}^{\epsilon}$ the law of process
$\mathbf{y}^{\epsilon}(\cdot)$ when the underlying diffusion process
$\boldsymbol{x}_{\epsilon}(t)$ follows $\mathbb{P}_{\boldsymbol{x}}^{\epsilon}$,
i.e.,
\[
\mathbf{Q}_{\boldsymbol{x}}^{\epsilon}=\mathbb{\widehat{P}}_{\boldsymbol{x}}^{\epsilon}\circ\Psi^{-1}\;.
\]
For any Borel probability measure $\pi$ on $\mathcal{V}_{\star}$,
we denote by $\mathbb{P}_{\pi}^{\epsilon}$ the law of process $\boldsymbol{x}_{\epsilon}(\cdot)$
with initial distribution $\pi$. Then, define $\mathbb{\widehat{P}}_{\pi}^{\epsilon}$,
$\mathbb{E}_{\pi}^{\epsilon}$, $\mathbb{\widehat{E}}_{\pi}^{\epsilon}$,
and $\mathbf{Q}_{\pi}^{\epsilon}$ similarly as above. We are now
ready to state the main result of this article:
\begin{thm}
\label{main}For all $i\in S_{\star}$ and for any sequence of Borel
probability measures $(\pi_{\epsilon})_{\epsilon>0}$ concentrated
on $\mathcal{V}_{i}$, the sequence of probability laws $(\mathbf{Q}_{\pi_{\epsilon}}^{\epsilon})_{\epsilon>0}$
converges to $\mathbf{Q}_{i}$, the law of the Markov process $(\mathbf{y}(t))_{t\ge0}$
starting from $i$, as $\epsilon$ tends to $0$.
\end{thm}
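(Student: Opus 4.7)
The plan is to verify the martingale problem for the generator $L_{\mathbf{y}}$ on $S_\star$ through test functions lifted from $S_\star$ to $\mathbb{R}^d$ via a Poisson equation, coupled with tightness of $(\mathbf{Q}_{\pi_\epsilon}^\epsilon)$ in $D([0,\infty),S_\star)$. Because $S_\star$ is finite, the martingale problem for $L_{\mathbf{y}}$ is well-posed, so convergence $\mathbf{Q}_{\pi_\epsilon}^\epsilon\to\mathbf{Q}_i$ follows once we establish three items: (a) convergence of initial distributions, which is immediate from $\pi_\epsilon$ being concentrated on $\mathcal{V}_i$; (b) tightness; and (c) that for every $f\in\mathbb{R}^{S_\star}$ the process $f(\mathbf{y}_\epsilon(t))-\int_0^t(L_{\mathbf{y}}f)(\mathbf{y}_\epsilon(s))\,ds$ is asymptotically a martingale. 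The main input for (c) is Theorem \ref{t51}, which I would invoke to produce, for each such $f$, a function $\phi_\epsilon\in C^2(\mathbb{R}^d)$ satisfying $\phi_\epsilon=f\circ\Psi+o_\epsilon(1)$ and $\theta_\epsilon\mathscr{L}_\epsilon\phi_\epsilon=(L_{\mathbf{y}}f)\circ\Psi+o_\epsilon(1)$ uniformly on $\mathcal{V}_\star$, with both uniformly bounded in $\epsilon$ and with $\int_\Delta|\theta_\epsilon\mathscr{L}_\epsilon\phi_\epsilon|\,d\mu_\epsilon\to 0$.

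\textbf{From test functions to the martingale identity.} Given such $\phi_\epsilon$, Dynkin's formula applied to $\widehat{\boldsymbol{x}}_\epsilon$ yields the genuine martingale
\[
N_\epsilon(t)\;=\;\phi_\epsilon(\widehat{\boldsymbol{x}}_\epsilon(t))-\phi_\epsilon(\widehat{\boldsymbol{x}}_\epsilon(0))-\int_0^t\theta_\epsilon\mathscr{L}_\epsilon\phi_\epsilon(\widehat{\boldsymbol{x}}_\epsilon(s))\,ds.
\]
Evaluating $N_\epsilon$ at the stopping time $S^\epsilon(t)$ from \eqref{Se} and using optional stopping produces a martingale in the filtration generated by the trace process. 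The boundary contributions convert to $f(\mathbf{y}_\epsilon(t))-f(\mathbf{y}_\epsilon(0))+o_\epsilon(1)$ via the $C^0$-approximation of $\phi_\epsilon$ to $f\circ\Psi$. The integral is split along $\mathcal{V}_\star$ versus $\Delta$: after the time change $s\mapsto S^\epsilon(s)$, the $\mathcal{V}_\star$-part equals $\int_0^t(L_{\mathbf{y}}f)(\mathbf{y}_\epsilon(s))\,ds+o_\epsilon(1)$ by the generator approximation on $\mathcal{V}_\star$, while the $\Delta$-part is negligible by the $L^1(\mu_\epsilon)$ smallness of $\theta_\epsilon\mathscr{L}_\epsilon\phi_\epsilon$ on $\Delta$ combined with a one-block, equilibration-within-a-valley argument that replaces time integrals of rapidly oscillating functions by their stationary averages (the inputs being $\mu_\epsilon(\Delta)=o_\epsilon(1)$ from Proposition \ref{p21} and the fact that intra-valley mixing takes place on a timescale much shorter than $\theta_\epsilon$).

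\textbf{Tightness and identification.} Tightness of $(\mathbf{Q}_{\pi_\epsilon}^\epsilon)$ in $D([0,\infty),S_\star)$ reduces via Aldous's criterion to a uniform lower bound on the mean time the trace process spends before a jump. This can be extracted either from the martingale identity above by taking $f=\chi_{\{i\}}$ and applying Chebyshev, or from standard Freidlin-Wentzell exit-time estimates from $\mathcal{V}_i$ on the scale $\theta_\epsilon$. Any subsequential limit $\mathbf{Q}^*$ of $(\mathbf{Q}_{\pi_\epsilon}^\epsilon)$ then satisfies the martingale problem for $L_{\mathbf{y}}$ started at $i$; well-posedness (automatic on a finite state space) forces $\mathbf{Q}^*=\mathbf{Q}_i$, so the whole sequence converges.

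\textbf{Main obstacle.} The hard step is Theorem \ref{t51}. The difficulty is in producing a $\phi_\epsilon$ whose rescaled generator reproduces $(L_{\mathbf{y}}f)\circ\Psi$ on $\mathcal{V}_\star$ with the precise coefficients $\beta_{i,j}/\nu_i$ coming from \eqref{beta}. These weights encode capacities of the auxiliary chain $\mathbf{x}(t)$ on \emph{all} of $S$, not only $S_\star$, so $\phi_\epsilon$ must interpolate from the constant $f(i)$ on $\mathcal{V}_i$ through the shallow wells $\mathcal{W}_j$, $j\in S\setminus S_\star$, and the saddle connectors in such a way that integration against $e^{-U/\epsilon}$ reconstructs the $\mathbf{x}$-chain equilibrium potentials $\mathbf{h}_{\{i\},S_\star\setminus\{i\}}$. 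The naive candidate built from those equilibrium potentials fails to be $C^2$ across the saddles and must be regularized using local harmonic coordinates dictated by $(\nabla^2U)(\boldsymbol{\sigma})$; calibrating the regularization so that the transition currents through each saddle carry exactly the weight $\omega_{\boldsymbol{\sigma}}$ is the delicate, model-dependent core of the argument and is what I expect to consume the bulk of the technical work.
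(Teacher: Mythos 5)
Your architecture matches the paper's: tightness plus a martingale-problem identification, with the Poisson equation (Theorem \ref{t51}) supplying the lifted test functions, and well-posedness of the finite-state martingale problem forcing the identification of the limit. Within that architecture, however, you anticipate a \emph{weaker} version of Theorem \ref{t51} than what the paper actually proves, and as a result you introduce work that is not needed.

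Specifically, you ask only that $\theta_\epsilon\mathscr{L}_\epsilon\phi_\epsilon=(L_{\mathbf{y}}\mathbf{f})\circ\Psi+o_\epsilon(1)$ on $\mathcal{V}_\star$ together with $\int_\Delta|\theta_\epsilon\mathscr{L}_\epsilon\phi_\epsilon|\,d\mu_\epsilon\to 0$, and then propose to control the $\Delta$-portion of the integral by a one-block, intra-valley equilibration argument. The paper's Theorem \ref{t51} is set up so that the Poisson equation holds \emph{exactly} with a right-hand side that is a finite linear combination of the indicators $\chi_{\mathcal{V}_i}$, $i\in S_\star$; in particular $\theta_\epsilon\mathscr{L}_\epsilon\phi_\epsilon\equiv 0$ on $\Delta$. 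This kills the $\Delta$-contribution identically, not approximately: after stopping the Dynkin martingale at $S^\epsilon(t)$, the change of variables $s\mapsto S^\epsilon(s)$ converts $\int_0^{S^\epsilon(t)}\theta_\epsilon(\mathscr{L}_\epsilon\phi_\epsilon)(\widehat{\boldsymbol{x}}_\epsilon(s))\,ds$ directly into $\int_0^t\theta_\epsilon(\mathscr{L}_\epsilon\phi_\epsilon)(\boldsymbol{y}_\epsilon(s))\,ds$ precisely because the integrand vanishes off $\mathcal{V}_\star$. There is no splitting, no one-block estimate, and no need to control the time-reparametrised integral over $\Delta$. This is the structural point your sketch misses, and it is exactly what makes the deduction of Theorem \ref{main} from Theorem \ref{t51} a one-page argument rather than a further technical battle.

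A second, related gap concerns tightness. You cite $\mu_\epsilon(\Delta)=o_\epsilon(1)$ (Proposition \ref{p21}) as the input that controls time spent outside $\mathcal{V}_\star$, but a static bound is not enough, since $\pi_\epsilon$ is not stationary. What the paper actually uses is a genuine dynamical estimate, $\mathbb{E}^\epsilon_{\pi_\epsilon}[\widehat\Delta(t)]=o_\epsilon(1)$ (Proposition \ref{p34}, proved in the appendix via Freidlin--Wentzell cycle estimates), and this is indispensable: Lemma \ref{p35} needs it to control $S^\epsilon(\tau+a)-S^\epsilon(\tau)$ and $\mathbb{P}^\epsilon_{\pi_\epsilon}[S^\epsilon(\tau)>2M]$. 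The non-jump estimate you mention, a lower bound on the exit time from $\mathcal{V}_i$ on scale $\theta_\epsilon$, is indeed extracted from Theorem \ref{t51} (Proposition \ref{p33}) essentially as you sketch. So your tightness outline is right in spirit, but under-specifies the $\Delta$-excursion control, which is a separately non-trivial ingredient.

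Finally, a minor point: the minimizer $\phi_\epsilon$ the paper constructs is in $W^{2,p}_{\mathrm{loc}}$ for all $p$, not $C^2$; this suffices for Dynkin's formula. And your description of the likely proof strategy for Theorem \ref{t51} itself (explicit interpolation through shallow wells plus regularisation at saddles) is not what the paper does — it instead extracts $\phi_\epsilon$ as a minimizer of the variational functional $\mathscr{I}_\epsilon$ and characterises its valley averages via Poincar\'e, elliptic regularity, and a test-function pairing — but since you explicitly flag that part as the model-dependent core rather than claim a proof, this is a difference of anticipated method rather than an error in the present argument.
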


We finish this section by explaining the organization of the rest
of the paper. In Section \ref{sec3}, we construct a class of test
functions which are useful in some of the computations we carry out
in Section \ref{sec5}. In Section \ref{sec5}, we analyze a Poisson
equation that will play a crucial role in the proof of both the \textit{tightness}
in Section \ref{sec4}, and the \textit{uniqueness of the limit point}
in Section \ref{sec6}. These two ingredients complete the proof of
the convergence result stated in Theorem \ref{main}, as we will demonstrate
in Section \ref{sec6}.

\section{\label{sec3}Test functions}

The purpose of the current section is to construct some test functions.
We acknowledge that these functions are not new; similar functions
have already been used in \cite{BEGK1} and \cite{LMS} in order to
obtain sharp estimates on the capacity associated with pairs of valleys.
Hence we refer to those papers for some proofs. We also remark here
that the way we utilize these test functions will be entirely different
from how they are used in \cite{BEGK1} and \cite{LMS}. We use these
functions to estimate the value of a solution of our {\em Poisson
Problem} in each valley (see Theorem \ref{t51}).

\subsection{\label{s31}Neighborhoods of saddle points}

We now introduce some subsets of $\mathbb{R}^{d}$ related to the
inter-valley structure of $U$. For each saddle point $\boldsymbol{\sigma}\in\mathcal{S}$,
denote by $-\lambda_{1}^{\boldsymbol{\sigma}}$ the unique negative
eigenvalue of $(\nabla^{2}U)(\boldsymbol{\boldsymbol{\sigma}})$,
and by $\lambda_{2}^{\boldsymbol{\sigma}},\,\cdots\,\,\lambda_{d}^{\boldsymbol{\sigma}}$
the positive eigenvalues of $(\nabla^{2}U)(\boldsymbol{\boldsymbol{\sigma}})$.
We choose unit eigenvectors $\boldsymbol{v}_{1}^{\boldsymbol{\sigma}},\dots\boldsymbol{v}_{d}^{\boldsymbol{\sigma}}$
of $(\nabla^{2}U)(\boldsymbol{\boldsymbol{\sigma}})$ corresponding
to the eigenvalues $-\lambda_{1}^{\boldsymbol{\sigma}},\lambda_{2}^{\boldsymbol{\sigma}},\dots,\lambda_{d}^{\boldsymbol{\sigma}}$.
\begin{rem}
\label{remd}Some care is needed as we select the direction of $\boldsymbol{v}_{1}^{\boldsymbol{\sigma}}$.
If $\boldsymbol{\sigma}\in\mathcal{S}_{i,\,j}$ for some $i<j$, we
choose $\boldsymbol{v}_{1}^{\boldsymbol{\sigma}}$ to be directed
toward the valley $\mathcal{W}_{j}$. Formally stating, we assume
that $\boldsymbol{\sigma}+\alpha\boldsymbol{v}_{1}^{\boldsymbol{\sigma}}\in\mathcal{W}_{j}$
for all sufficiently small $\alpha>0$.
\end{rem}

We define
\begin{equation}
\delta=\delta(\epsilon)=\sqrt{\epsilon\log(1/\epsilon)}\;.\label{edelta}
\end{equation}
A closed box $\ensuremath{\mathcal{C}_{\boldsymbol{\sigma}}^{\epsilon}}$
around the saddle point $\boldsymbol{\sigma}$ is defined by
\[
\ensuremath{\mathcal{C}_{\boldsymbol{\sigma}}^{\epsilon}}=\left\{ \boldsymbol{\sigma}+\sum_{i=1}^{d}\alpha_{i}\boldsymbol{v}_{i}^{\boldsymbol{\sigma}}:\alpha_{1}\in\left[-\frac{J\delta}{\sqrt{\lambda_{1}^{\boldsymbol{\sigma}}}},\,\frac{J\delta}{\sqrt{\lambda_{1}^{\boldsymbol{\sigma}}}}\right]\text{ and }\alpha_{i}\in\left[-\frac{2J\delta}{\sqrt{\lambda_{i}^{\boldsymbol{\sigma}}}},\,\frac{2J\delta}{\sqrt{\lambda_{i}^{\boldsymbol{\sigma}}}}\right]\text{ for }2\le i\le d\,\right\} \;,
\]
where $J$ is a constant which is larger than $2^{1/2}$\textbf{ }(cf.
\eqref{e4101}). We refer to Figure \ref{fig3} for the illustration
of the sets defined in this subsection.

\begin{figure}
\includegraphics[scale=0.21]{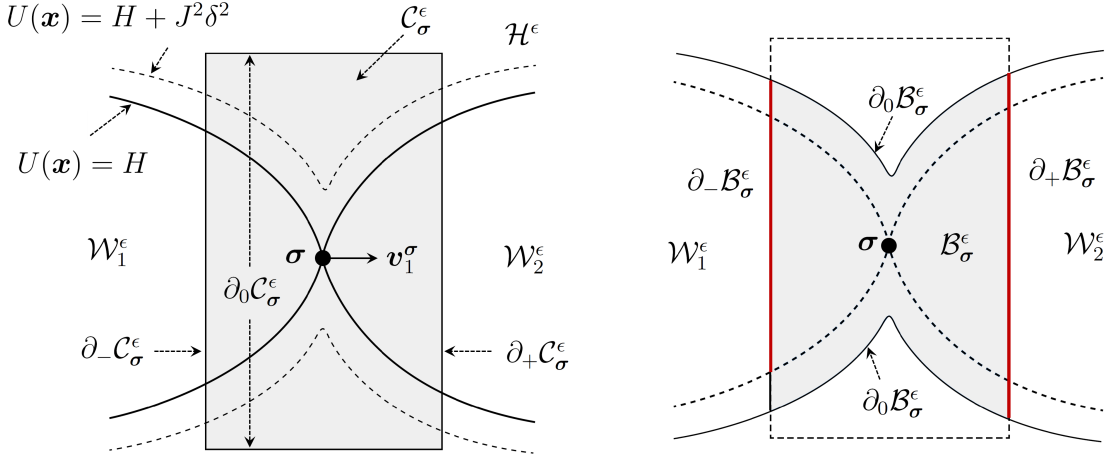} \caption{Visualization of a neighborhood of $\boldsymbol{\sigma}\in\mathcal{S}_{1,\,2}$.\label{fig3} }
\end{figure}

\begin{notation}
We summarize the notations used in the remaining of the paper. We
regard $J$ as a constant so that the terms like $o_{\epsilon}(1)$,
$O(\delta^{2})$ may depend on $J$ as well. All the constants without
subscript or superscript $\epsilon$ are independent of $\epsilon$
(and hence of $\delta$) but may depend on $J$ or the function $U$.
Constants are usually denoted by $c$ or $C$ and different appearances
may take different values.
\end{notation}

Decompose the boundary $\partial\ensuremath{\mathcal{C}_{\boldsymbol{\sigma}}^{\epsilon}}$
into
\begin{align*}
 & \partial_{+}\mathcal{C}_{\boldsymbol{\sigma}}^{\epsilon}=\left\{ \boldsymbol{\sigma}+\sum_{i=1}^{d}\alpha_{i}\boldsymbol{v}_{i}^{\boldsymbol{\sigma}}\in\ensuremath{\mathcal{C}_{\boldsymbol{\sigma}}^{\epsilon}}:\alpha_{1}=\frac{J\delta}{\sqrt{\lambda_{1}^{\boldsymbol{\sigma}}}}\,\right\} \;,\\
 & \partial_{-}\mathcal{C}_{\boldsymbol{\sigma}}^{\epsilon}=\left\{ \boldsymbol{\sigma}+\sum_{i=1}^{d}\alpha_{i}\boldsymbol{v}_{i}^{\boldsymbol{\sigma}}\in\ensuremath{\mathcal{C}_{\boldsymbol{\sigma}}^{\epsilon}}:\alpha_{1}=-\frac{J\delta}{\sqrt{\lambda_{1}^{\boldsymbol{\sigma}}}}\,\right\} \;,\;\;\text{and}\;\;\partial_{0}\mathcal{C}_{\boldsymbol{\sigma}}^{\epsilon}=\partial\ensuremath{\mathcal{C}_{\epsilon}^{\boldsymbol{\sigma}}}\setminus(\partial_{+}\ensuremath{\mathcal{C}_{\boldsymbol{\sigma}}^{\epsilon}}\cup\partial_{-}\mathcal{C}_{\boldsymbol{\sigma}}^{\epsilon})\;.
\end{align*}
The following is a direct consequence of a Taylor expansion of $U$
around $\boldsymbol{\sigma}$, since $U(\boldsymbol{\sigma})=H$.
\begin{lem}
\label{lem31}For all $\boldsymbol{x}\in\partial_{0}\mathcal{C}_{\boldsymbol{\sigma}}^{\epsilon}$,
we have that
\[
U(\boldsymbol{x})\ge H+(1+o_{\epsilon}(1))\,\frac{3J^{2}\delta^{2}}{2}\;.
\]
\end{lem}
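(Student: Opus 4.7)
The plan is to prove this by Taylor expansion of $U$ around the saddle point $\boldsymbol{\sigma}$, exploiting the eigenbasis decomposition of the Hessian that is already built into the definition of the box $\mathcal{C}_{\boldsymbol{\sigma}}^{\epsilon}$.

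First, write any $\boldsymbol{x} \in \mathcal{C}_{\boldsymbol{\sigma}}^{\epsilon}$ as $\boldsymbol{x} = \boldsymbol{\sigma} + \sum_{i=1}^d \alpha_i \boldsymbol{v}_i^{\boldsymbol{\sigma}}$ with the constraints on $\alpha_i$ given in the definition. Since $\boldsymbol{\sigma}$ is a critical point with $U(\boldsymbol{\sigma}) = H$ and the $\boldsymbol{v}_i^{\boldsymbol{\sigma}}$'s diagonalize $(\nabla^2 U)(\boldsymbol{\sigma})$ with eigenvalues $-\lambda_1^{\boldsymbol{\sigma}}, \lambda_2^{\boldsymbol{\sigma}}, \dots, \lambda_d^{\boldsymbol{\sigma}}$, the second-order Taylor expansion gives
\[
U(\boldsymbol{x}) = H - \tfrac{1}{2}\lambda_1^{\boldsymbol{\sigma}} \alpha_1^2 + \tfrac{1}{2}\sum_{i=2}^d \lambda_i^{\boldsymbol{\sigma}} \alpha_i^2 + R(\boldsymbol{x}),
\]
where the remainder satisfies $|R(\boldsymbol{x})| \le C |\boldsymbol{x} - \boldsymbol{\sigma}|^3$ because $U \in C^2$ (a uniform cubic bound will hold in a neighborhood of $\boldsymbol{\sigma}$ under a slight additional smoothness that is standard at a non-degenerate critical point; at worst one absorbs a continuity-of-Hessian modulus into the $o_\epsilon(1)$ term). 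Since each $|\alpha_i| \le 2J\delta/\sqrt{\lambda_i^{\boldsymbol{\sigma}}}$, the distance $|\boldsymbol{x}-\boldsymbol{\sigma}|$ is $O(\delta)$, hence $R(\boldsymbol{x}) = O(\delta^3) = o(\delta^2)$.

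Next, I would exploit the specific structure of the face $\partial_0 \mathcal{C}_{\boldsymbol{\sigma}}^{\epsilon}$. By definition, if $\boldsymbol{x} \in \partial_0 \mathcal{C}_{\boldsymbol{\sigma}}^{\epsilon}$ then some index $k \in \{2,\dots,d\}$ saturates the constraint, i.e., $|\alpha_k| = 2J\delta/\sqrt{\lambda_k^{\boldsymbol{\sigma}}}$, which yields $\lambda_k^{\boldsymbol{\sigma}} \alpha_k^2 = 4J^2 \delta^2$. Discarding the other nonnegative terms in the positive part of the quadratic form, I obtain the lower bound
\[
\tfrac{1}{2}\sum_{i=2}^d \lambda_i^{\boldsymbol{\sigma}} \alpha_i^2 \;\ge\; \tfrac{1}{2}\lambda_k^{\boldsymbol{\sigma}} \alpha_k^2 \;=\; 2J^2 \delta^2 .
\]
Meanwhile, the unstable direction is bounded by $|\alpha_1| \le J\delta/\sqrt{\lambda_1^{\boldsymbol{\sigma}}}$, so $\tfrac{1}{2}\lambda_1^{\boldsymbol{\sigma}} \alpha_1^2 \le \tfrac{1}{2}J^2 \delta^2$.

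Combining these two bounds with the Taylor expansion,
\[
U(\boldsymbol{x}) \;\ge\; H - \tfrac{1}{2}J^2 \delta^2 + 2J^2 \delta^2 + R(\boldsymbol{x}) \;=\; H + \tfrac{3J^2\delta^2}{2} + O(\delta^3).
\]
Since $\delta = \sqrt{\epsilon \log(1/\epsilon)} \to 0$, we have $O(\delta^3) = \delta^2 \cdot o_\epsilon(1)$, and the claimed inequality $U(\boldsymbol{x}) \ge H + (1 + o_\epsilon(1))\tfrac{3J^2\delta^2}{2}$ follows. There is no real obstacle: the only subtlety is making the $o_\epsilon(1)$ error uniform over $\boldsymbol{x} \in \partial_0 \mathcal{C}_{\boldsymbol{\sigma}}^{\epsilon}$ and over $\boldsymbol{\sigma} \in \mathcal{S}$, which is immediate because $\mathcal{S}$ is finite and the cubic remainder estimate holds uniformly on a compact neighborhood of each saddle point once $\epsilon$ (and hence $\delta$) is small enough.
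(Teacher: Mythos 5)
Your proof is correct and takes the same route as the paper, which simply cites the second-order Taylor expansion of $U$ at $\boldsymbol{\sigma}$ (deferring the details to Lemma 6.1 of \cite{LMS}); you supply exactly that computation, correctly identifying that on $\partial_0\mathcal{C}_{\boldsymbol{\sigma}}^{\epsilon}$ some transverse constraint $|\alpha_k|=2J\delta/\sqrt{\lambda_k^{\boldsymbol{\sigma}}}$ is saturated, which yields the crucial lower bound $\tfrac{1}{2}\sum_{i\ge2}\lambda_i^{\boldsymbol{\sigma}}\alpha_i^2\ge 2J^2\delta^2$ against the loss $\tfrac{1}{2}\lambda_1^{\boldsymbol{\sigma}}\alpha_1^2\le\tfrac{1}{2}J^2\delta^2$. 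Your caveat about the remainder is apt: under the paper's standing assumption $U\in C^2$ the remainder is only $o(|\boldsymbol{x}-\boldsymbol{\sigma}|^2)$ rather than $O(|\boldsymbol{x}-\boldsymbol{\sigma}|^3)$, but this is exactly what the $o_\epsilon(1)$ factor in the conclusion is designed to absorb, uniformly over the finite set $\mathcal{S}$ and over $\partial_0\mathcal{C}_{\boldsymbol{\sigma}}^{\epsilon}$ for small $\epsilon$.
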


\begin{proof}
This follows from the Taylor expansion of $U$ at $\sigma$ (see \cite[Lemma 6.1]{LMS}).
\end{proof}
Now we define
\[
\mathcal{H}^{\epsilon}=\left\{ \boldsymbol{x}\in\mathbb{R}^{d}:U(\boldsymbol{x})\le H+J^{2}\delta^{2}\right\} \;,
\]
and let $\mathcal{B}_{\boldsymbol{\sigma}}^{\epsilon}=\mathcal{C}_{\boldsymbol{\sigma}}^{\epsilon}\cap\mathcal{H}^{\epsilon}$
for $\boldsymbol{\sigma}\in\mathcal{S}$. Decompose the boundary $\partial\mathcal{B}_{\boldsymbol{\sigma}}^{\epsilon}$
as
\begin{equation}
\partial_{+}\mathcal{B}_{\boldsymbol{\sigma}}^{\epsilon}=\partial\mathcal{B}_{\boldsymbol{\sigma}}^{\epsilon}\cap\partial_{+}\mathcal{C}_{\boldsymbol{\sigma}}^{\epsilon}\;,\;\;\partial_{-}\mathcal{B}_{\boldsymbol{\sigma}}^{\epsilon}=\partial\mathcal{B}_{\boldsymbol{\sigma}}^{\epsilon}\cap\partial_{-}\mathcal{C}_{\boldsymbol{\sigma}}^{\epsilon}\;,\;\text{and}\;\;\partial_{0}\mathcal{B}_{\boldsymbol{\sigma}}^{\epsilon}=\partial\mathcal{B}_{\boldsymbol{\sigma}}^{\epsilon}\setminus(\partial_{+}\mathcal{B}_{\boldsymbol{\sigma}}^{\epsilon}\cup\partial_{-}\mathcal{B}_{\boldsymbol{\sigma}}^{\epsilon})\;.\label{eqb}
\end{equation}
Then, by Lemma \ref{lem31}, for small enough $\epsilon$, we have
\begin{equation}
U(\boldsymbol{x})=H+J^{2}\delta^{2}\text{ for all }\boldsymbol{x}\in\partial_{0}\mathcal{B}_{\epsilon}^{\boldsymbol{\sigma}}\;.\label{b0}
\end{equation}
Thus, the set $\mathcal{H}^{\epsilon}\setminus\bigcup_{\boldsymbol{\sigma}\in\mathcal{S}}\mathcal{B}_{\boldsymbol{\sigma}}^{\epsilon}$
consists of $K$ connected components $\mathcal{W}_{1}^{\epsilon}$,
$\cdots,\,\mathcal{W}_{K}^{\epsilon}$ such that $\mathcal{V}'_{i}\subset\mathcal{W}_{i}^{\epsilon}$
for all $i\in S$. Furthermore, if $\boldsymbol{\sigma}\in\mathcal{S}_{i,\,j}$
with $i<j$, then by Remark \ref{remd} we have that
\begin{equation}
\partial_{-}\mathcal{B}_{\boldsymbol{\sigma}}^{\epsilon}\subset\partial\mathcal{\mathcal{W}}_{i}^{\epsilon}\text{ \;and\; }\partial_{+}\mathcal{B}_{\boldsymbol{\sigma}}^{\epsilon}\subset\partial\mathcal{\mathcal{W}}_{j}^{\epsilon}\;.\label{bb1}
\end{equation}
We shall assume from now on that $\epsilon>0$ is small enough so
that the construction above is in force.

\subsection{\label{s32}Test function and basic estimates}

For $\boldsymbol{\sigma}\in\mathcal{S}$, define a normalizing constant
$c_{\epsilon}^{\boldsymbol{\sigma}}$ by
\begin{equation}
c_{\epsilon}^{\boldsymbol{\sigma}}=\int_{-J\delta/\sqrt{\lambda_{1}^{\boldsymbol{\sigma}}}}^{J\delta/\sqrt{\lambda_{1}^{\boldsymbol{\sigma}}}}\ \sqrt{\frac{\lambda_{1}^{\boldsymbol{\sigma}}}{2\pi\epsilon}}\exp\left\{ -\frac{\lambda_{1}^{\boldsymbol{\sigma}}}{2\epsilon}t^{2}\right\} \ dt=1+o_{\epsilon}(1)\;,\label{ce}
\end{equation}
and define a function $f_{\boldsymbol{\sigma}}^{\epsilon}(\cdot)$
on $\mathcal{B}_{\boldsymbol{\sigma}}^{\epsilon}$ by,
\begin{equation}
f_{\epsilon}^{\boldsymbol{\sigma}}(\boldsymbol{x})=(c_{\epsilon}^{\boldsymbol{\sigma}})^{-1}\int_{-J\delta/\sqrt{\lambda_{1}^{\boldsymbol{\sigma}}}}^{(\boldsymbol{x}-\boldsymbol{\sigma})\cdot\boldsymbol{v}_{1}^{\boldsymbol{\sigma}}}\sqrt{\frac{\lambda_{1}^{\boldsymbol{\sigma}}}{2\pi\epsilon}}\exp\left\{ -\frac{\lambda_{1}^{\boldsymbol{\sigma}}}{2\epsilon}t^{2}\right\} dt\;\;;\;\boldsymbol{x}\in\mathcal{B}_{\boldsymbol{\sigma}}^{\epsilon}\;.\label{cee}
\end{equation}
By \eqref{ce} we have
\begin{equation}
f_{\epsilon}^{\boldsymbol{\sigma}}(\boldsymbol{x})=\begin{cases}
0 & \mbox{if }\boldsymbol{x}\in\partial_{-}\mathcal{B}_{\boldsymbol{\sigma}}^{\epsilon}\\
1 & \mbox{if }\boldsymbol{x}\in\partial_{+}\mathcal{B}_{\boldsymbol{\sigma}}^{\epsilon}
\end{cases}\;.\label{ece}
\end{equation}
We next investigate two basic properties of $f_{\epsilon}^{\boldsymbol{\sigma}}$
in Lemmas 3.4 and 3.5 below. The statement and the proof of the first
lemma is similar to those of \cite[Lemma 8.7]{LMS} (in terms of the
notations of \cite{LMS}, our model corresponding to the special case
$\mathbb{M}=\mathbb{I}$, where $\mathbb{I}$ denotes the identity
matrix). Since the proof is much simpler for our specific case, and
some of the computations carried out below will be useful later, we
give the full proof of this lemma.
\begin{lem}
\label{lem35}For all $\boldsymbol{\sigma}\in\mathcal{S}$, we have
that
\begin{equation}
\theta_{\epsilon}\int_{\mathcal{C}_{\boldsymbol{\sigma}}^{\epsilon}}\left|(\mathscr{L}_{\epsilon}f_{\epsilon}^{\boldsymbol{\sigma}})(\boldsymbol{x})\right|\hat{\mu}_{\epsilon}(\boldsymbol{x})\,d\boldsymbol{x}=o_{\epsilon}(1)\;.\label{ee35}
\end{equation}
\end{lem}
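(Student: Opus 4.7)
\medskip

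\noindent\textbf{Proof plan.} Fix $\boldsymbol{\sigma}\in\mathcal{S}$ and, for notational simplicity, abbreviate $\lambda_i=\lambda_i^{\boldsymbol{\sigma}}$, $\boldsymbol{v}_i=\boldsymbol{v}_i^{\boldsymbol{\sigma}}$, $f=f_\epsilon^{\boldsymbol{\sigma}}$, $c=c_\epsilon^{\boldsymbol{\sigma}}$, $\mathcal{C}=\mathcal{C}_{\boldsymbol{\sigma}}^\epsilon$. Introduce local coordinates $y_i=(\boldsymbol{x}-\boldsymbol{\sigma})\cdot\boldsymbol{v}_i$ in the eigenbasis of $(\nabla^2 U)(\boldsymbol{\sigma})$. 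My first step is the direct computation of $\mathscr{L}_\epsilon f$: since $f$ depends only on $y_1$, setting $\phi(t)=\sqrt{\lambda_1/(2\pi\epsilon)}\exp\{-\lambda_1 t^2/(2\epsilon)\}$, I get $\nabla f=c^{-1}\phi(y_1)\boldsymbol{v}_1$ and $\Delta f=c^{-1}\phi'(y_1)=-c^{-1}(\lambda_1 y_1/\epsilon)\phi(y_1)$, so
\[
(\mathscr{L}_\epsilon f)(\boldsymbol{x})\;=\;\epsilon\Delta f-\nabla U\cdot\nabla f\;=\;-\,c^{-1}\,\phi(y_1)\,\bigl[\lambda_1 y_1+(\nabla U\cdot\boldsymbol{v}_1)(\boldsymbol{x})\bigr].
\]

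The next step is the key cancellation. By the Taylor expansion of $U$ at $\boldsymbol{\sigma}$ (using $U(\boldsymbol{\sigma})=H$ and the eigenvalue assumption $(\nabla^2 U)(\boldsymbol{\sigma})\boldsymbol{v}_1=-\lambda_1\boldsymbol{v}_1$),
\[
(\nabla U\cdot\boldsymbol{v}_1)(\boldsymbol{x})\;=\;-\lambda_1 y_1+O(|y|^2),\qquad U(\boldsymbol{x})\;=\;H-\tfrac{\lambda_1}{2}y_1^2+\tfrac12\sum_{i=2}^d\lambda_i y_i^2+O(|y|^3),
\]
uniformly on $\mathcal{C}$. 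Hence $|\mathscr{L}_\epsilon f(\boldsymbol{x})|\le C\,c^{-1}\,|y|^2\,\phi(y_1)$ on $\mathcal{C}$, with $C$ depending only on $U$ and $J$.

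Now I combine with $\theta_\epsilon\hat\mu_\epsilon(\boldsymbol{x})=Z_\epsilon^{-1}e^{-h/\epsilon}e^{(H-U(\boldsymbol{x}))/\epsilon}$. The Gaussian in $\phi(y_1)$ exactly cancels the $y_1$--contribution of $e^{(H-U)/\epsilon}$, leaving
\[
\phi(y_1)\,e^{(H-U(\boldsymbol{x}))/\epsilon}\;=\;\sqrt{\tfrac{\lambda_1}{2\pi\epsilon}}\,\exp\Bigl\{-\tfrac{1}{2\epsilon}\sum_{i=2}^d\lambda_i y_i^2\Bigr\}\,\exp\bigl\{O(|y|^3/\epsilon)\bigr\}.
\]
On $\mathcal{C}$ we have $|y|\le C\delta$, so $|y|^3/\epsilon\le C\epsilon^{1/2}(\log(1/\epsilon))^{3/2}=o(1)$, and the last exponential is $1+o_\epsilon(1)$. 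On the $y_1$--interval we use $|y_1|\le J\delta/\sqrt{\lambda_1}$, giving $|y|^2\le C\delta^2$ and a length $O(\delta)$. Putting this together with the Gaussian integral in $(y_2,\dots,y_d)$ (which contributes $(2\pi\epsilon)^{(d-1)/2}/\prod_{i\ge2}\sqrt{\lambda_i}$) yields
\[
\int_{\mathcal{C}}|\mathscr{L}_\epsilon f|\,\theta_\epsilon\hat\mu_\epsilon\,d\boldsymbol{x}\;\le\;\frac{C(1+o_\epsilon(1))\,\delta^3}{Z_\epsilon e^{h/\epsilon}(2\pi\epsilon)^{d/2}}\cdot\frac{\sqrt{\lambda_1}}{\sqrt{\prod_{i\ge2}\lambda_i}}\cdot\frac{1}{\sqrt{2\pi\epsilon}}\cdot(2\pi\epsilon)^{(d-1)/2}\cdot(2\pi\epsilon)^{d/2}.
\]
Invoking Proposition \ref{p21} to rewrite $Z_\epsilon^{-1}e^{-h/\epsilon}(2\pi\epsilon)^{d/2}=(1+o_\epsilon(1))/\nu_\star$, the right-hand side reduces to a constant times $\delta^3/\epsilon=\epsilon^{1/2}(\log(1/\epsilon))^{3/2}$, which tends to zero.

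The only delicate point is verifying that the $O(|y|^3/\epsilon)$ remainder in the exponent is truly $o(1)$ on $\mathcal{C}$; this is the reason the scale $\delta=\sqrt{\epsilon\log(1/\epsilon)}$ was introduced, since it is the largest scale for which $\delta^3/\epsilon\to0$. Beyond this, everything is a Gaussian bookkeeping exercise, and the cancellation of $\phi(y_1)$ against $e^{\lambda_1 y_1^2/(2\epsilon)}$ is the structural feature that makes $f_\epsilon^{\boldsymbol{\sigma}}$ an almost-harmonic function for $\mathscr{L}_\epsilon$ near the saddle.
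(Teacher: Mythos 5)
Your proof is correct and follows essentially the same route as the paper's: you compute $\mathscr{L}_{\epsilon}f_{\epsilon}^{\boldsymbol{\sigma}}$ exactly, observe via a Taylor expansion at $\boldsymbol{\sigma}$ that the linear term of $\nabla U\cdot\boldsymbol{v}_1$ cancels $\lambda_1 y_1$ leaving an $O(\delta^2)$ prefactor (the paper's display \eqref{c02}--\eqref{c03}), and then carry out the same Gaussian bookkeeping in which the $y_1$-Gaussian of $f_\epsilon^{\boldsymbol{\sigma}}$ cancels the saddle's unstable direction in $e^{-U/\epsilon}$, yielding a bound of order $\delta^3/\epsilon=o_\epsilon(1)$. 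The only cosmetic difference is that you keep $|y|^2$ rather than passing immediately to $O(\delta^2)$ and you state the cancellation more explicitly, but the underlying argument coincides with the paper's.
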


\begin{proof}
To ease the notation, we may assume that $\boldsymbol{\sigma}=0$.
For $\boldsymbol{x}\in\mathcal{C}_{\boldsymbol{\sigma}}^{\epsilon}$,
write $\alpha_{i}:=\alpha_{i}(\boldsymbol{x})=\boldsymbol{x}\cdot\boldsymbol{v}_{i}^{\boldsymbol{\sigma}}$
so that $\boldsymbol{x}=\sum_{i=1}^{d}\alpha_{i}\boldsymbol{v}_{i}^{\boldsymbol{\sigma}}$.
By elementary computations, we can write
\begin{equation}
(\mathscr{L}_{\epsilon}f_{\epsilon}^{\boldsymbol{\sigma}})(\boldsymbol{x})=-\frac{1}{c_{\epsilon}^{\boldsymbol{\sigma}}}\sqrt{\frac{\lambda_{1}^{\boldsymbol{\sigma}}}{2\pi\epsilon}}e^{-\frac{\lambda_{1}^{\boldsymbol{\sigma}}}{2\epsilon}\alpha_{1}^{2}}\left[(\nabla U(\boldsymbol{x})+\lambda_{1}^{\boldsymbol{\sigma}}\boldsymbol{x})\cdot\boldsymbol{v}_{1}^{\boldsymbol{\sigma}}\right]\;.\label{c01}
\end{equation}
By the Taylor expansion of $\nabla U$ around $\boldsymbol{\sigma}$,
we have
\begin{align}
\nabla U(\boldsymbol{x})+\lambda_{1}^{\boldsymbol{\sigma}}\boldsymbol{x} & =(\nabla^{2}U)(\boldsymbol{\boldsymbol{\sigma}})\,\boldsymbol{x}+O(\delta^{2})+\lambda_{1}^{\boldsymbol{\sigma}}\boldsymbol{x}=\sum_{i=2}^{d}(\alpha_{i}\lambda_{i}+\alpha_{i}\lambda_{1})^{\boldsymbol{\sigma}}\boldsymbol{v}_{i}^{\boldsymbol{\sigma}}+O(\delta^{2})\;.\label{c02}
\end{align}
Since $\boldsymbol{v}_{1}^{\boldsymbol{\sigma}}\cdot\boldsymbol{v}_{i}^{\boldsymbol{\sigma}}=0$
for $2\le i\le d$, we conclude from \eqref{c01} and \eqref{c02}
that
\begin{equation}
(\mathscr{L}_{\epsilon}f_{\epsilon}^{\boldsymbol{\sigma}})(\boldsymbol{x})=O(\delta^{2})\,\epsilon^{-\frac{1}{2}}\,\exp\left\{ -\frac{\lambda_{1}^{\boldsymbol{\sigma}}}{2\epsilon}\alpha_{1}^{2}\right\} \;.\label{c03}
\end{equation}
Therefore, the left-hand side of \eqref{ee35} is bounded above by
\begin{align}
O(\delta^{2})\,\theta_{\epsilon}\, & \epsilon^{-\frac{1}{2}}Z_{\epsilon}^{-1}\int_{\mathcal{C}_{\boldsymbol{\sigma}}^{\epsilon}}\exp\left\{ -\frac{U(\boldsymbol{x})+(1/2)\lambda_{1}^{\boldsymbol{\sigma}}\alpha_{1}^{2}}{\epsilon}\right\} d\boldsymbol{x}\nonumber \\
 & =O(\delta^{2})\,\theta_{\epsilon}\,\epsilon^{-\frac{1}{2}}Z_{\epsilon}^{-1}e^{-\frac{H}{\epsilon}}\int_{\mathcal{C}_{\boldsymbol{\sigma}}^{\epsilon}}\exp\left\{ -\frac{1}{2\epsilon}\sum_{i=2}^{d}\lambda_{i}^{\boldsymbol{\sigma}}\alpha_{i}^{2}\right\} d\boldsymbol{x}\;,\label{c010}
\end{align}
where the identity follows from the second-order Taylor expansion
of $U$ around $\boldsymbol{\sigma}$ and the fact that $O(\delta^{3}/\epsilon)=o_{\epsilon}(1)$.
By the change of variables, the last integral can be bounded as
\begin{align*}
\frac{2J\delta}{\sqrt{\lambda_{1}^{\boldsymbol{\sigma}}}} & \int_{-2J\delta/\sqrt{\lambda_{2}^{\boldsymbol{\sigma}}}}^{2J\delta/\sqrt{\lambda_{2}^{\boldsymbol{\sigma}}}}\cdots\int_{-2J\delta/\sqrt{\lambda_{d}^{\boldsymbol{\sigma}}}}^{2J\delta/\sqrt{\lambda_{d}^{\boldsymbol{\sigma}}}}\exp\left\{ -\frac{1}{2\epsilon}\sum_{i=2}^{d}\lambda_{i}^{\boldsymbol{\sigma}}\alpha_{i}^{2}\right\} d\alpha_{2}\cdots d\alpha_{d}\\
 & \le\epsilon^{\frac{d-1}{2}}\frac{2J\delta}{\sqrt{\lambda_{1}^{\boldsymbol{\sigma}}}}\int_{-\infty}^{\infty}\cdots\int_{-\infty}^{\infty}\exp\left\{ -\frac{1}{2}\sum_{i=2}^{d}\lambda_{i}^{\boldsymbol{\sigma}}y_{i}^{2}\right\} dy_{2}\cdots dy_{d}=C\epsilon^{\frac{d-1}{2}}\delta\;.
\end{align*}
Inserting this into \eqref{c010} finishes the proof.
\end{proof}
\begin{lem}
\label{lem33}For all $\boldsymbol{\sigma}\in\mathcal{S}$, we have
\[
\theta_{\epsilon}\,\epsilon\int_{\mathcal{B}_{\boldsymbol{\sigma}}^{\epsilon}}|\nabla f_{\epsilon}^{\boldsymbol{\sigma}}(\boldsymbol{x})|^{2}\hat{\mu}_{\epsilon}(\boldsymbol{x})d\boldsymbol{x}=(1+o_{\epsilon}(1))\,\nu_{\star}^{-1}\,\omega_{\boldsymbol{\sigma}}\;.
\]
\end{lem}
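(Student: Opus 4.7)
The plan is to turn the left-hand side into an explicit Gaussian integral via a second-order Taylor expansion of $U$ at $\boldsymbol{\sigma}$, evaluate it to leading order, and match the resulting expression with $\omega_{\boldsymbol{\sigma}}/\nu_{\star}$ after inserting the asymptotics of $Z_{\epsilon}$ from Proposition \ref{p21}. Many of the ingredients are already present in the proof of Lemma \ref{lem35}.

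First I would compute $\nabla f_{\epsilon}^{\boldsymbol{\sigma}}$ directly from \eqref{cee}. Because the integrand depends on $\boldsymbol{x}$ only through $\alpha_{1}=(\boldsymbol{x}-\boldsymbol{\sigma})\cdot\boldsymbol{v}_{1}^{\boldsymbol{\sigma}}$, the gradient is directed along $\boldsymbol{v}_{1}^{\boldsymbol{\sigma}}$ and
\[
|\nabla f_{\epsilon}^{\boldsymbol{\sigma}}(\boldsymbol{x})|^{2}=(c_{\epsilon}^{\boldsymbol{\sigma}})^{-2}\,\frac{\lambda_{1}^{\boldsymbol{\sigma}}}{2\pi\epsilon}\,\exp\!\left\{-\frac{\lambda_{1}^{\boldsymbol{\sigma}}}{\epsilon}\alpha_{1}^{2}\right\}.
\]
Next I would substitute the Taylor expansion $U(\boldsymbol{x})=H+\tfrac{1}{2}\bigl(-\lambda_{1}^{\boldsymbol{\sigma}}\alpha_{1}^{2}+\sum_{i=2}^{d}\lambda_{i}^{\boldsymbol{\sigma}}\alpha_{i}^{2}\bigr)+O(|\boldsymbol{x}-\boldsymbol{\sigma}|^{3})$ into $\hat\mu_{\epsilon}$, observing (as in Lemma \ref{lem35}) that the cubic remainder contributes $O(\delta^{3}/\epsilon)=o_{\epsilon}(1)$ inside the exponent, so
\[
|\nabla f_{\epsilon}^{\boldsymbol{\sigma}}(\boldsymbol{x})|^{2}\,\hat\mu_{\epsilon}(\boldsymbol{x})=(1+o_{\epsilon}(1))\,Z_{\epsilon}^{-1}\,\frac{\lambda_{1}^{\boldsymbol{\sigma}}}{2\pi\epsilon}\,e^{-H/\epsilon}\,\exp\!\left\{-\frac{\lambda_{1}^{\boldsymbol{\sigma}}}{2\epsilon}\alpha_{1}^{2}-\frac{1}{2\epsilon}\sum_{i=2}^{d}\lambda_{i}^{\boldsymbol{\sigma}}\alpha_{i}^{2}\right\}.
\]

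Then I would pass to the orthonormal coordinates $(\alpha_{1},\dots,\alpha_{d})$ induced by the eigenbasis of $(\nabla^{2}U)(\boldsymbol{\sigma})$, which is volume-preserving. On $\mathcal{B}_{\boldsymbol{\sigma}}^{\epsilon}\subset\mathcal{C}_{\boldsymbol{\sigma}}^{\epsilon}$ the variables $\alpha_{i}$ range over intervals of length $\asymp\delta$, but since $\delta/\sqrt{\epsilon}=\sqrt{\log(1/\epsilon)}\to\infty$ each of the Gaussian integrals $\int e^{-\lambda_{i}^{\boldsymbol{\sigma}}\alpha_{i}^{2}/(2\epsilon)}\,d\alpha_{i}$ over $\mathcal{B}_{\boldsymbol{\sigma}}^{\epsilon}$ agrees with its integral over $\mathbb{R}$ up to a multiplicative $(1+o_{\epsilon}(1))$ factor (the truncation error is of order $\epsilon^{J^{2}/4}$, and likewise the additional constraint $U\le H+J^{2}\delta^{2}$ cutting $\mathcal{B}_{\boldsymbol{\sigma}}^{\epsilon}$ out of $\mathcal{C}_{\boldsymbol{\sigma}}^{\epsilon}$ affects only the Gaussian tails). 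The full Gaussian integral evaluates to $(2\pi\epsilon)^{d/2}\big/\sqrt{\lambda_{1}^{\boldsymbol{\sigma}}\cdots\lambda_{d}^{\boldsymbol{\sigma}}}$.

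Finally I would substitute $Z_{\epsilon}=(1+o_{\epsilon}(1))(2\pi\epsilon)^{d/2}e^{-h/\epsilon}\nu_{\star}$ from \eqref{ep3} and $\theta_{\epsilon}=e^{(H-h)/\epsilon}$. Collecting powers of $\epsilon$, the $(2\pi\epsilon)^{d/2}$ factors and the exponential factor $e^{-(H-h)/\epsilon}$ cancel against $\theta_{\epsilon}\epsilon$ and $Z_{\epsilon}$, leaving
\[
\theta_{\epsilon}\,\epsilon\int_{\mathcal{B}_{\boldsymbol{\sigma}}^{\epsilon}}|\nabla f_{\epsilon}^{\boldsymbol{\sigma}}|^{2}\hat\mu_{\epsilon}\,d\boldsymbol{x}=(1+o_{\epsilon}(1))\,\frac{1}{\nu_{\star}}\cdot\frac{\lambda_{1}^{\boldsymbol{\sigma}}}{2\pi\sqrt{\lambda_{1}^{\boldsymbol{\sigma}}\cdots\lambda_{d}^{\boldsymbol{\sigma}}}},
\]
and one identifies $\lambda_{1}^{\boldsymbol{\sigma}}\cdots\lambda_{d}^{\boldsymbol{\sigma}}=-\det(\nabla^{2}U)(\boldsymbol{\sigma})$ to recognize the second factor as $\omega_{\boldsymbol{\sigma}}$. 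Since there is no nontrivial potential-theoretic input required, no obstacle is serious here; the only step one must be careful with is verifying that the truncation to $\mathcal{B}_{\boldsymbol{\sigma}}^{\epsilon}$ (rather than the box $\mathcal{C}_{\boldsymbol{\sigma}}^{\epsilon}$) still yields the full Gaussian mass to leading order, which follows because the deleted region in coordinate form is $\{-\lambda_{1}^{\boldsymbol{\sigma}}\alpha_{1}^{2}+\sum_{i\ge 2}\lambda_{i}^{\boldsymbol{\sigma}}\alpha_{i}^{2}>2J^{2}\delta^{2}\}$, on which the Gaussian weight is $O(\epsilon^{J^{2}})$.
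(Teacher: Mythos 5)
Your proposal is correct and is, in substance, the standard Laplace-type computation that the paper invokes by citing \cite[Lemma 8.4]{LMS} (the paper gives no proof of its own, only the citation). Your version—explicit gradient of $f_\epsilon^{\boldsymbol{\sigma}}$, second-order Taylor expansion of $U$ with $O(\delta^3/\epsilon)=o_\epsilon(1)$ error absorbed into the prefactor, Gaussian integration in the eigenbasis, and cancellation of the $(2\pi\epsilon)^{d/2}$ and exponential factors against $\theta_\epsilon\,\epsilon\,Z_\epsilon^{-1}$—is the same argument, and the treatment of the two truncations (from $\mathbb{R}^d$ to $\mathcal{C}_{\boldsymbol{\sigma}}^\epsilon$, and from $\mathcal{C}_{\boldsymbol{\sigma}}^\epsilon$ to $\mathcal{B}_{\boldsymbol{\sigma}}^\epsilon$) is handled correctly; the only tiny blemish is the exponent in your stated tail bound ($\epsilon^{J^2/4}$ instead of $\epsilon^{J^2/2}$), which is immaterial since $J>\sqrt{12d}$.
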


\begin{proof}
See \cite[Lemma 8.4]{LMS}.
\end{proof}
For $\mathbf{q}=(\mathbf{q}(i):i\in S)\in\mathbb{R}^{S}$, we now
define a test function $F_{\epsilon}^{\mathbf{q}}:\mathbb{R}^{d}\rightarrow\mathbb{R}$.
This test function is used in Sections \ref{sec5} and \ref{sec4}.
In particular, in Section \ref{sec5}, the vector $\mathbf{q}$ may
depend on $\epsilon$. For this reason, we will keep track of the
dependence of the constants on $\mathbf{q}$ in the inequalities that
appear in this section.

We start by defining a real-valued function $\widehat{F}_{\epsilon}^{\mathbf{q}}$
on $\mathcal{H}^{\epsilon}$. This function is defined by
\begin{equation}
\widehat{F}_{\epsilon}^{\mathbf{q}}(\boldsymbol{x})=\begin{cases}
\mathbf{q}(i) & \text{if}\;\boldsymbol{x}\in\mathcal{W}_{i}^{\epsilon}\;,\;i\in S\;,\\
\mathbf{q}(i)+(\mathbf{q}(j)-\mathbf{q}(i))f_{\epsilon}^{\boldsymbol{\sigma}}(\boldsymbol{x}) & \text{if }\boldsymbol{x}\in\mathcal{B}_{\boldsymbol{\sigma}}^{\epsilon}\;,\;\boldsymbol{\sigma}\in\mathcal{S}_{i,\,j}\text{ with }i<j\;.
\end{cases}\label{fq}
\end{equation}
By \eqref{ece}, the function $\widehat{F}_{\epsilon}^{\mathbf{q}}$
is continuous on $\mathcal{H}^{\epsilon}$. Evidently,
\begin{equation}
\Vert\widehat{F}_{\epsilon}^{\mathbf{q}}\Vert_{L^{\infty}(\mathcal{H}^{\epsilon})}\le\Vert\mathbf{q}\Vert_{\infty}:=\max\{|\mathbf{q}(i)|:i\in S\}\;.\label{bfq1}
\end{equation}
Furthermore, since $\Vert\nabla f_{\epsilon}^{\boldsymbol{\sigma}}\Vert\le C\,\epsilon^{-1/2}$,
we deduce that the function $\widehat{F}_{\epsilon}^{\mathbf{q}}$
satisfies
\begin{equation}
\Vert\nabla\widehat{F}_{\epsilon}^{\mathbf{q}}\Vert_{L^{\infty}(\mathcal{H}^{\epsilon})}\le C\,\epsilon^{-1/2}\max\{|\mathbf{q}(i)-\mathbf{q}(j)|:i,\,j\in S\}\le C\,\epsilon^{-1/2}[D_{\mathbf{x}}(\mathbf{q},\,\mathbf{q})]^{1/2}\;.\label{bfq2}
\end{equation}
Here we stress that the constant $C$ is independent of $\mathbf{q}$.

Let $\mathcal{K}$ be a compact set containing $\mathcal{H}^{\epsilon}$
for all $\epsilon\in(0,\,2]$. For instance, one can select $\mathcal{K}=\mathcal{H}^{a}$
for any $a>2$. Then, for $\epsilon\in(0,\,1]$, by \eqref{bfq1}
and \eqref{bfq2}, there exists a continuous extension $F_{\epsilon}^{\mathbf{q}}:\mathbb{R}^{d}\rightarrow\mathbb{R}$
of $\widehat{F}_{\epsilon}^{\mathbf{q}}$ satisfying
\begin{equation}
\textup{supp }F_{\epsilon}^{\mathbf{q}}\subset\mathcal{K}\;,\;\;\left\Vert F_{\epsilon}^{\mathbf{q}}\right\Vert _{L^{\infty}(\mathbb{R}^{d})}\le\Vert\mathbf{q}\Vert_{\infty}\;,\text{and\;}\left\Vert \nabla F_{\epsilon}^{\mathbf{q}}\right\Vert _{L^{\infty}(\mathbb{R}^{d})}\le C\,\epsilon^{-1/2}[D_{\mathbf{x}}(\mathbf{q},\,\mathbf{q})]^{1/2}\;.\label{eka1}
\end{equation}
Suppose from now on that $\epsilon$ is not larger than $1$ so that
we can define $F_{\epsilon}^{\mathbf{q}}$ satisfying \eqref{eka1}.

Note that the Dirichlet form $\mathscr{D}_{\epsilon}(\cdot)$ corresponding
to the process $\boldsymbol{x}_{\epsilon}(t)$ is given by
\begin{equation}
\mathscr{D}_{\epsilon}(f)=\epsilon\,\int_{\mathbb{R}^{d}}|\nabla f(\boldsymbol{x})|^{2}\hat{\mu}_{\epsilon}(\boldsymbol{x})d\boldsymbol{x}\;\;;\;f\in H_{\textrm{loc}}^{1}(\mathbb{R}^{d})\;.\label{df}
\end{equation}

\begin{lem}
\label{lem34}For all $\mathbf{q}=(\mathbf{q}(i):i\in S)\in\mathbb{R}^{K}$,
we have that
\[
\theta_{\epsilon}\,\mathscr{D}_{\epsilon}(F_{\epsilon}^{\mathbf{q}})=(1+o_{\epsilon}(1))\,\nu_{\star}^{-1}D_{\mathbf{x}}(\mathbf{q},\,\mathbf{q})\;.
\]
\end{lem}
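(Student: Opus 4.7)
The plan is to decompose
\[
\mathbb{R}^d = \bigsqcup_{i\in S}\mathcal{W}_i^\epsilon \ \sqcup\ \bigsqcup_{\boldsymbol{\sigma}\in\mathcal{S}}\mathcal{B}_{\boldsymbol{\sigma}}^\epsilon \ \sqcup\ (\mathbb{R}^d\setminus\mathcal{H}^\epsilon)
\]
and split $\mathscr{D}_\epsilon(F_\epsilon^{\mathbf{q}})=\epsilon\int|\nabla F_\epsilon^{\mathbf{q}}|^2\hat\mu_\epsilon\,d\boldsymbol{x}$ accordingly. First, by the definition \eqref{fq}, $\widehat{F}_\epsilon^{\mathbf{q}}$ is identically equal to $\mathbf{q}(i)$ on each component $\mathcal{W}_i^\epsilon$, so $\nabla F_\epsilon^{\mathbf{q}}\equiv 0$ there and these wells contribute nothing.

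Second, on a saddle box $\mathcal{B}_{\boldsymbol{\sigma}}^\epsilon$ with $\boldsymbol{\sigma}\in\mathcal{S}_{i,j}$, $i<j$, we have $\nabla F_\epsilon^{\mathbf{q}}=(\mathbf{q}(j)-\mathbf{q}(i))\nabla f_\epsilon^{\boldsymbol{\sigma}}$, so Lemma \ref{lem33} gives
\[
\theta_\epsilon\,\epsilon\int_{\mathcal{B}_{\boldsymbol{\sigma}}^\epsilon}|\nabla F_\epsilon^{\mathbf{q}}|^2\hat\mu_\epsilon\,d\boldsymbol{x}=(1+o_\epsilon(1))\,\nu_\star^{-1}\,\omega_{\boldsymbol{\sigma}}\,(\mathbf{q}(j)-\mathbf{q}(i))^2.
\]
Summing over the finite set $\mathcal{S}=\bigsqcup_{i<j}\mathcal{S}_{i,j}$ and using $\omega_{i,j}=\sum_{\boldsymbol{\sigma}\in\mathcal{S}_{i,j}}\omega_{\boldsymbol{\sigma}}$, $\omega_{i,j}=\omega_{j,i}$, together with the definition \eqref{dxfg} of $D_{\mathbf{x}}$, yields the main term $(1+o_\epsilon(1))\,\nu_\star^{-1}\,D_{\mathbf{x}}(\mathbf{q},\mathbf{q})$.

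Third, and this will be the only nontrivial estimate, I need to show the tail contribution over $\mathbb{R}^d\setminus\mathcal{H}^\epsilon$ is negligible. Since $\textup{supp}\,F_\epsilon^{\mathbf{q}}\subset\mathcal{K}$ and $\mathcal{K}\setminus\mathcal{H}^\epsilon\subset\{U\ge H+J^2\delta^2\}$, applying the tightness condition \eqref{tc} at $a=H+J^2\delta^2$, the partition-function asymptotic \eqref{ep3}, and the identity $\delta^2=\epsilon\log(1/\epsilon)$ gives
\[
\int_{\mathcal{K}\setminus\mathcal{H}^\epsilon}\hat\mu_\epsilon(\boldsymbol{x})\,d\boldsymbol{x}\le C\,Z_\epsilon^{-1}\,e^{-(H+J^2\delta^2)/\epsilon}\le C\,\epsilon^{-d/2}\,\theta_\epsilon^{-1}\,\epsilon^{J^2}.
\]
Combined with the uniform gradient bound $\|\nabla F_\epsilon^{\mathbf{q}}\|_\infty^2\le C\epsilon^{-1}D_{\mathbf{x}}(\mathbf{q},\mathbf{q})$ from \eqref{eka1}, the tail contribution to $\theta_\epsilon\mathscr{D}_\epsilon(F_\epsilon^{\mathbf{q}})$ is bounded by $C\,\epsilon^{J^2-d/2}\,D_{\mathbf{x}}(\mathbf{q},\mathbf{q})=o_\epsilon(1)\,D_{\mathbf{x}}(\mathbf{q},\mathbf{q})$, since the standing assumption $J>\sqrt{12d}$ forces $J^2-d/2>0$. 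Adding the three pieces completes the proof. The main obstacle is thus exactly this tail bound, for which the logarithmic factor in $\delta$ and the choice of $J$ are essential; everything else is pure bookkeeping reliant on Lemma \ref{lem33}.
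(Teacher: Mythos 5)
Your proof is correct and follows essentially the same route as the paper: Lemma \ref{lem33} and the definition \eqref{dxfg} give the main contribution over $\mathcal{H}^\epsilon$, and the tail over $\mathcal{K}\setminus\mathcal{H}^\epsilon$ is controlled by \eqref{eka1}, \eqref{ep3}, and the bound $U\ge H+J^2\delta^2$ there, yielding the factor $\epsilon^{J^2-d/2}=o_\epsilon(1)$. The only difference is that you spell out the decomposition of $\mathcal{H}^\epsilon$ into wells and saddle boxes, which the paper treats as ``immediate from Lemma \ref{lem33}.''
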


\begin{proof}
It is immediate from Lemma \ref{lem33} that
\[
\theta_{\epsilon}\,\epsilon\int_{\mathcal{H}^{\epsilon}}|\nabla F_{\epsilon}^{\mathbf{q}}(\boldsymbol{x})|^{2}\hat{\mu}_{\epsilon}(\boldsymbol{x})d\boldsymbol{x}=(1+o_{\epsilon}(1))\,\nu_{\star}^{-1}\,D_{\mathbf{x}}(\mathbf{q},\,\mathbf{q})\;.
\]
Thus, it suffices to show that
\begin{equation}
\theta_{\epsilon}\,\epsilon\int_{(\mathcal{H}^{\epsilon})^{c}}|\nabla F_{\epsilon}^{\mathbf{q}}(\boldsymbol{x})|^{2}\hat{\mu}_{\epsilon}(\boldsymbol{x})d\boldsymbol{x}=o_{\epsilon}(1)\,D_{\mathbf{x}}(\mathbf{q},\,\mathbf{q})\;.\label{eka3}
\end{equation}
Since $\nabla F_{\epsilon}^{\mathbf{q}}\equiv0$ on $\mathcal{K}^{c}$
we can replace the domain of integration in \eqref{eka3} with $\mathcal{K}\setminus\mathcal{H}_{\epsilon}$.
Then, by \eqref{eka1}, \eqref{ep3}, and by the fact that $U(\boldsymbol{x})\ge H+J^{2}\delta^{2}$
for $\boldsymbol{x}\notin\mathcal{H}_{\epsilon}$,
\[
\theta_{\epsilon}\,\epsilon\int_{\mathcal{K}\setminus\mathcal{H}^{\epsilon}}|\nabla F_{\epsilon}^{\mathbf{q}}(\boldsymbol{x})|^{2}\hat{\mu}_{\epsilon}(\boldsymbol{x})d\boldsymbol{x}\le\,m_{d}(\mathcal{K})\,D_{\mathbf{x}}(\mathbf{q,\,\mathbf{q}})\,\theta_{\epsilon}\,Z_{\epsilon}^{-1}\,e^{-H/\epsilon}\epsilon^{J^{2}}\;\le C\,D_{\mathbf{x}}(\mathbf{q,\,\mathbf{q}})\,\epsilon^{J^{2}-(d/2)}\;,
\]
where $m_{d}(\cdot)$ is the Lebesgue measure on $\mathbb{R}^{d}$.
This completes the proof since $J>\sqrt{12d}$.
\end{proof}

\section{\label{sec5}A Poisson equation }

For each $i\in S_{\star}$, we pick a smooth function $\zeta^{i}:\mathbb{R}^{d}\to\mathbb{R}$
such that $0\le\zeta^{i}\le1,$ $\zeta^{i}=1$ on $\mathcal{V}_{i}$,
and $\zeta^{i}=0$ on the complement of $\mathcal{V}'_{i}$. We also
set
\[
\bar{\zeta}^{i}=\int\zeta^{i}(\boldsymbol{x})\,\hat{\mu}_{\epsilon}(\boldsymbol{x})\,d\boldsymbol{x}\;.
\]
Define $\mathbf{a}_{\epsilon}=(\mathbf{a}_{\epsilon}(i):i\in S_{\star})\in\mathbb{R}^{S_{\star}}$
by
\[
\mathbf{a}_{\epsilon}(i)=\frac{Z_{\epsilon}^{-1}\,(2\pi\epsilon)^{d/2}\,e^{-h/\epsilon}\,\nu_{i}}{\bar{\zeta}^{i}}\;\;;\;i\in S_{\star}\;.
\]
From $\mu_{\epsilon}(\mathcal{V}_{i})\le\bar{\zeta}^{i}\le\mu_{\epsilon}(\mathcal{V}'_{i})$,
and \eqref{ep1}, we learn
\begin{equation}
\mathbf{a}_{\epsilon}(i)=1+o_{\epsilon}(1)\text{ for all }i\in S_{\star}\;.\label{aei}
\end{equation}
The main result of the current section is stated in the following
theorem.
\begin{thm}
\label{t51}For all $\mathbf{f}:S_{\star}\rightarrow\mathbb{R}$,
there exists a bounded function $\phi_{\epsilon}=\phi_{\epsilon}^{\mathbf{f}}:\mathbb{R}^{d}\rightarrow\mathbb{R}$
satisfying all the following properties:
\begin{enumerate}
\item $\phi_{\epsilon}\in C^{2}(\mathbb{R}^{d})$.
\item $\phi_{\epsilon}$ satisfies the equation
\begin{equation}
\theta_{\epsilon}\mathcal{\mathscr{L}_{\epsilon}}\phi_{\epsilon}=\sum_{i\in S_{\star}}\mathbf{a}_{\epsilon}(i)\,(L_{\mathbf{y}}\mathbf{f})(i)\,\zeta^{i}\;.\label{p321}
\end{equation}
\item For all $i\in S_{\star}$, it holds that
\begin{equation}
\lim_{\epsilon\rightarrow\infty}\sup_{\boldsymbol{x}\in\mathcal{V}_{i}}\left|\phi_{\epsilon}(\boldsymbol{x})-\mathbf{f}(i)\right|=0\;.\label{p322}
\end{equation}
\end{enumerate}
\end{thm}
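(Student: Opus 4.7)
My plan is to construct $\phi_\epsilon$ as the Fredholm-alternative solution of the Poisson equation \eqref{p321} and to establish property (3) by a variational (energy) argument anchored in the test function $F_\epsilon^{\mathbf{q}}$ of Section \ref{sec3}. The first step is to verify the solvability condition $\int g_\epsilon\,d\mu_\epsilon=0$, where $g_\epsilon:=\sum_{i\in S_\star}\mathbf{a}_\epsilon(i)(L_{\mathbf{y}}\mathbf{f})(i)\chi_{\mathcal{V}_i}$: the definition of $\mathbf{a}_\epsilon$ gives $\mathbf{a}_\epsilon(i)\mu_\epsilon(\mathcal{V}_i)=Z_\epsilon^{-1}(2\pi\epsilon)^{d/2}e^{-h/\epsilon}\nu_i$, so the integrability reduces to $\sum_{i\in S_\star}\mu_\star(i)(L_{\mathbf{y}}\mathbf{f})(i)=0$, which is the invariance of $\mu_\star(i)=\nu_i/\nu_\star$ under $L_{\mathbf{y}}$. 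Because $-\theta_\epsilon\mathscr{L}_\epsilon$ is self-adjoint and non-negative on $L^2(\mu_\epsilon)$ with kernel $\mathbb{R}$ and a (fixed-$\epsilon$) spectral gap, the Fredholm alternative yields a unique mean-zero $\phi_\epsilon^0\in H^1(\mu_\epsilon)$ satisfying \eqref{p321}, and I set $\phi_\epsilon:=\phi_\epsilon^0+\sum_{i\in S_\star}\mu_\star(i)\mathbf{f}(i)$. Property (1) is standard elliptic regularity for $\mathscr{L}_\epsilon$ with bounded data $g_\epsilon$; boundedness of $\phi_\epsilon$ follows from the maximum principle combined with the growth condition \eqref{gc}.

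The heart of the argument is property (3). Let $\mathbf{q}\in\mathbb{R}^S$ be the $L_{\mathbf{x}}$-harmonic extension of $\mathbf{f}$: $\mathbf{q}|_{S_\star}=\mathbf{f}$ and $L_{\mathbf{x}}\mathbf{q}=0$ on $S\setminus S_\star$, so that $F_\epsilon^{\mathbf{q}}\equiv\mathbf{f}(i)$ on $\mathcal{V}_i\subset\mathcal{W}_i^\epsilon$ for every $i\in S_\star$. I aim to prove the sharp energy estimate $\theta_\epsilon\mathscr{D}_\epsilon(\phi_\epsilon-F_\epsilon^{\mathbf{q}})=o_\epsilon(1)$. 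Using the Dirichlet-principle identities $\phi_\epsilon=\arg\min\Phi_\epsilon$, $\Phi_\epsilon(\phi_\epsilon)=-\tfrac12\theta_\epsilon\mathscr{D}_\epsilon(\phi_\epsilon)$ and $\Phi_\epsilon(\psi)-\Phi_\epsilon(\phi_\epsilon)=\tfrac12\theta_\epsilon\mathscr{D}_\epsilon(\psi-\phi_\epsilon)$, where $\Phi_\epsilon(\psi):=\tfrac12\theta_\epsilon\mathscr{D}_\epsilon(\psi)+\int\psi g_\epsilon\,d\mu_\epsilon$, this reduces to showing $\Phi_\epsilon(F_\epsilon^{\mathbf{q}})-\Phi_\epsilon(\phi_\epsilon)=o_\epsilon(1)$. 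Combining Lemma \ref{lem34}, the direct computation $\int F_\epsilon^{\mathbf{q}}g_\epsilon\,d\mu_\epsilon\to -D_{\mathbf{y}}(\mathbf{f},\mathbf{f})$ that is immediate from Proposition \ref{p21}, and the algebraic identity $\nu_\star^{-1}D_{\mathbf{x}}(\mathbf{q},\mathbf{q})=D_{\mathbf{y}}(\mathbf{f},\mathbf{f})$ (a consequence of the definition \eqref{beta} of $\beta_{i,j}$ and the $L_{\mathbf{x}}$-harmonicity of $\mathbf{q}$) give $\Phi_\epsilon(F_\epsilon^{\mathbf{q}})\to-\tfrac12 D_{\mathbf{y}}(\mathbf{f},\mathbf{f})$. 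The matching asymptotic for $\Phi_\epsilon(\phi_\epsilon)$ comes from the dual representation
\[
\theta_\epsilon\mathscr{D}_\epsilon(\phi_\epsilon)\;=\;\sup_{\psi}\ \frac{\bigl(\int\psi\,g_\epsilon\,d\mu_\epsilon\bigr)^{2}}{\theta_\epsilon\mathscr{D}_\epsilon(\psi)},
\]
the Dirichlet-form Cauchy-Schwarz on the finite chain $\mathbf{y}$, and the key compactness inequality $D_{\mathbf{y}}(\mathbf{p}_\psi,\mathbf{p}_\psi)\le(1+o_\epsilon(1))\theta_\epsilon\mathscr{D}_\epsilon(\psi)$, where $\mathbf{p}_\psi(i)$ is the $\mu_\epsilon$-weighted average of $\psi$ on $\mathcal{V}_i$.

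To conclude property (3), restrict to a single valley $\mathcal{V}_i$, $i\in S_\star$. Since $U\le h+O(1)$ on $\mathcal{V}_i$, Proposition \ref{p21} gives $\hat\mu_\epsilon\ge c\,(2\pi\epsilon)^{-d/2}\nu_\star^{-1}$ there, so the weighted energy bound $\theta_\epsilon\mathscr{D}_\epsilon(\phi_\epsilon-F_\epsilon^{\mathbf{q}})=o_\epsilon(1)$ translates to the unweighted estimate $\int_{\mathcal{V}_i}|\nabla(\phi_\epsilon-F_\epsilon^{\mathbf{q}})|^{2}\,d\boldsymbol{x}=o(\theta_\epsilon^{-1}\epsilon^{d/2-1})$, which is superexponentially small. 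The Poincaré inequality on the bounded set $\mathcal{V}_i$ then gives $\|\phi_\epsilon-F_\epsilon^{\mathbf{q}}-c_{\epsilon,i}\|_{L^{2}(\mathcal{V}_i)}=o_\epsilon(1)$ for some constants $c_{\epsilon,i}$, and since $\theta_\epsilon\mathscr{L}_\epsilon(\phi_\epsilon-F_\epsilon^{\mathbf{q}})=\mathbf{a}_\epsilon(i)(L_{\mathbf{y}}\mathbf{f})(i)$ is uniformly bounded on $\mathcal{V}_i$, standard $W^{2,p}$ elliptic estimates together with Sobolev embedding upgrade this to $L^{\infty}(\mathcal{V}_i)$-closeness. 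The normalization of $\phi_\epsilon$ together with Proposition \ref{p21} then forces $c_{\epsilon,i}\to 0$ for each $i\in S_\star$, yielding (3) because $F_\epsilon^{\mathbf{q}}\equiv\mathbf{f}(i)$ on $\mathcal{V}_i$. \emph{The main obstacle} is the sharp compactness inequality $D_{\mathbf{y}}(\mathbf{p}_\psi,\mathbf{p}_\psi)\le(1+o_\epsilon(1))\theta_\epsilon\mathscr{D}_\epsilon(\psi)$: this is where the precise Gaussian profile of the saddle test functions $f_\epsilon^{\boldsymbol{\sigma}}$ of Section \ref{s32} and the reversibility of $\boldsymbol{x}_\epsilon$ enter essentially, and is the reason the proof is ``to some extent model-dependent.''
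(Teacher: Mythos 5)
Your proposal shares the paper's variational framework (taking $\phi_{\epsilon}$ as the minimizer of the energy functional, reading off the equation \eqref{p321} and the identity $\theta_{\epsilon}\mathscr{D}_{\epsilon}(\phi_{\epsilon})=2\lambda_{\epsilon}$), and you correctly establish the solvability condition and the lower bound $\lambda_{\epsilon}\ge\frac12 D_{\mathbf{y}}(\mathbf{f},\mathbf{f})+o_{\epsilon}(1)$ via $F_{\epsilon}^{\widetilde{\mathbf{f}}}$, which is exactly Proposition \ref{p53}. The Poincar\'e + elliptic-regularity upgrade inside $\mathcal{V}_{i}^{(1)}$ mirrors Propositions \ref{p55} and \ref{p58}. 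Up to that point the two arguments coincide.

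The genuine gap is the step you yourself flag as ``the main obstacle,'' the quadratic comparison
\[
D_{\mathbf{y}}(\mathbf{p}_{\psi},\mathbf{p}_{\psi})\;\le\;(1+o_{\epsilon}(1))\,\theta_{\epsilon}\,\mathscr{D}_{\epsilon}(\psi)\qquad\text{for all }\psi,
\]
which you state but do not prove. This inequality is, after unwinding, the lower bound on the capacity between deepest valleys (it says the discrete Dirichlet form of the valley averages is dominated by the rescaled continuous one), and it is precisely the ingredient the paper advertises itself as \emph{avoiding}. Without it your route stalls twice: first, to get the matching upper bound on $\lambda_{\epsilon}$ and hence $\theta_{\epsilon}\mathscr{D}_{\epsilon}(\phi_{\epsilon}-F_{\epsilon}^{\mathbf{q}})=o_{\epsilon}(1)$; and second, more seriously, to pin the valley-wise constants. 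Your claim that ``the normalization of $\phi_{\epsilon}$ together with Proposition \ref{p21} then forces $c_{\epsilon,i}\to0$'' cannot be right as stated: the mean-zero normalization is a single linear constraint and cannot determine the $|S_{\star}|$ constants $c_{\epsilon,i}$; showing these constants are all close to a common value is the whole content of Proposition \ref{p59}, and deducing it from the small-energy bound requires exactly the capacity/compactness inequality you have not proved. The paper's Proposition \ref{p59} circumvents this: rather than comparing the discrete and continuous Dirichlet forms quadratically, it proves a \emph{bilinear} flux identity (Lemmas \ref{l510}, \ref{l511}, \ref{lem510}), namely $\theta_{\epsilon}\int F_{\epsilon}^{\widetilde{\mathbf{h}}_{\epsilon}}\,\mathscr{L}_{\epsilon}\psi_{\epsilon}\,d\mu_{\epsilon}=-\nu_{\star}^{-1}D_{\mathbf{x}}(\widetilde{\mathbf{h}}_{\epsilon},\mathbf{q}_{\epsilon})+o_{\epsilon}(1)\lambda_{\epsilon}\Vert\mathbf{h}_{\epsilon}\Vert_{\infty}$ for any choice of $\mathbf{h}_{\epsilon}$, obtained by an explicit Gaussian computation through $\partial_{\pm}\mathcal{B}_{\boldsymbol{\sigma}}^{\epsilon}$ using the $L^{\infty}$ control from Proposition \ref{p58}. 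Pairing this with the elementary right-hand-side computation gives $D_{\mathbf{y}}(\mathbf{h}_{\epsilon},\mathbf{q}_{\epsilon}^{\star}-\mathbf{f})=o_{\epsilon}(1)\lambda_{\epsilon}\Vert\mathbf{h}_{\epsilon}\Vert_{\infty}$ for \emph{all} $\mathbf{h}_{\epsilon}$, and the self-referential choice $\mathbf{h}_{\epsilon}=\mathbf{q}_{\epsilon}^{\star}-\mathbf{f}-c_{\epsilon}$ (with $c_{\epsilon}$ the average) immediately yields $\Vert\mathbf{h}_{\epsilon}\Vert_{\infty}=o_{\epsilon}(1)\lambda_{\epsilon}$. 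The boundedness of $\lambda_{\epsilon}$ is then read off \emph{a posteriori} from this characterization, rather than being needed in advance. In short: your outline is correct as far as it goes, but it leaves the decisive technical lemma unproved and replaces the paper's bilinear, capacity-free mechanism with a quadratic comparison that the paper deliberately does not use.
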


\begin{rem}
In \cite[Theorem 5.3]{ST}, a similar analysis has been carried out
for a slightly different situation. In \cite{ST}, we treat a Poisson
equation of the form \eqref{p321} for a different right-hand side.
The form of the right-hand we have chosen in \eqref{p321} enables
us to use the Poincar\'{e}'s inequality (see subsection 4.3 below).
Furthermore, the proof therein relies on the capacity estimates between
metastable valleys. Our proof though does not use any capacity estimates
and has a chance to be applicable to the non-reversible variant of
our model. This fact deserves to be highlighted here once more.
\end{rem}

Note that the function $L_{\mathbf{y}}\mathbf{f}:S_{\star}\mathbb{\rightarrow R}$
satisfies
\begin{equation}
\sum_{i\in S_{\star}}(L_{\mathbf{y}}\mathbf{f})(i)\,\mu_{\star}(i)=0\label{eg2}
\end{equation}
since $\mu_{\star}(\cdot)$ defined in \eqref{inv} is the invariant
measure for the Markov chain $\mathbf{y}(t)$. Let $\mathbf{e}_{i}\in\mathbb{R}^{S_{\star}}$,
$i\in S_{\star}$, be the $i$th unit vector defined by
\begin{equation}
\mathbf{e}_{i}(j)=\mathbf{1}\{i=j\}\;\;;\;j\in S_{\star}\;.\label{estar}
\end{equation}
For $i,\,j\in S_{\star}$, let $\mathbf{S}_{i,j}$ be the collection
of $\mathbf{f}\in\mathbb{R}^{S_{\star}}$ satisfying
\[
L_{\mathbf{y}}\mathbf{f}=\frac{1}{\mu_{\star}(i)}\mathbf{e}_{i}-\frac{1}{\mu_{\star}(j)}\mathbf{e}_{j}=\frac{\nu_{\star}}{\nu_{i}}\mathbf{e}_{i}-\frac{\nu_{\star}}{\nu_{j}}\mathbf{e}_{j}\;.
\]
Remark that the selection $\mathbf{S}_{i,j}$ is consistent with the
condition $\eqref{eg2}$ for $L_{\mathbf{y}}\mathbf{f}$. It is immediate
from the irreducibility of the Markov chain $\mathbf{y}(t)$ that
$\bigcup_{i,\,j\in S_{\star}}\mathbf{S}_{i,j}$ spans whole space
$\mathbb{R}^{S_{\star}}.$ Note that for $\mathbf{f}\equiv0$, it
suffices to select $\phi_{\epsilon}\equiv0$ and thus it suffices
to consider non-zero $\mathbf{f}$. Therefore, by the linearity of
the statement of Theorem \ref{t51} with respect to $\mathbf{f}$,
it suffices to prove the theorem for $\mathbf{f}\in\mathbf{S}_{i,j}$
only. \textit{To simplify notations, let us assume that $1,\,2\in S_{\star}$,
and assume that $\mathbf{f}\in\mathbf{S}_{1,2}$, i.e,
\begin{equation}
L_{\mathbf{y}}\mathbf{f}=\frac{\nu_{\star}}{\nu_{1}}\mathbf{e}_{1}-\frac{\nu_{\star}}{\nu_{2}}\mathbf{e}_{2}\;.\label{eg3}
\end{equation}
Now we fix such $\mathbf{f}$ throughout the remaining part of the
current section. }We note that $(L_{\mathbf{y}}\mathbf{f})(i)=0$
for all $i\neq1,\,2$.

Our plan is to select the test function $\phi_{\epsilon}$ that appeared
in Theorem \ref{t51} as a minimizer of a functional $\mathscr{I}_{\epsilon}(\cdot)$
that will be defined in Section \ref{s51}. More precisely, we first
take a minimizer $\psi_{\epsilon}$ of that functional satisfies a
certain symmetry condition (see \eqref{hg1} below) and analyze its
property thoroughly in Sections \ref{sec52}-\ref{s55}. Then, we
shall prove that a translation of $\psi_{\epsilon}$, which is also
a minimizer of $\mathscr{I}_{\epsilon}(\cdot)$, satisfies all the
requirements of Theorem \ref{t51} in Section \ref{s56}.

\subsection{\label{s51}A Variational principle}

Recall from \eqref{df} the functional $\mathscr{D}_{\epsilon}(\cdot)$
and define a functional $\mathscr{I}_{\epsilon}(\cdot)$ on $H^{1}(\mathbb{R}^{d})$
as
\begin{equation}
\mathscr{I}_{\epsilon}(\phi)=\frac{1}{2}\theta_{\epsilon}\,\mathscr{D}_{\epsilon}(\phi)+\sum_{i=1,\,2}\mathbf{a}_{\epsilon}(i)\,(L_{\mathbf{y}}\mathbf{f})(i)\,\int\zeta^{i}(\boldsymbol{x})\phi(\boldsymbol{x})\,\hat{\mu}_{\epsilon}(\boldsymbol{x})d\boldsymbol{x}\;.\label{hg1}
\end{equation}
Denote by $\psi_{\epsilon}$ a minimizer of $\mathscr{I}_{\epsilon}(\cdot)$.
Then, it is well-known that $\psi_{\epsilon}$ classically solves
\eqref{p321}, i.e.,
\begin{equation}
\theta_{\epsilon}\mathcal{\mathscr{L}_{\epsilon}}\psi_{\epsilon}=\sum_{i\in S_{\star}}\mathbf{a}_{\epsilon}(i)\,(L_{\mathbf{y}}\mathbf{f})(i)\,\chi_{\mathcal{V}_{i}}\;.\label{psie}
\end{equation}
Our purpose in the remaining part is to find a constant $c_{\epsilon}$
such that $\phi_{\epsilon}=\psi_{\epsilon}+c_{\epsilon}$ satisfies
\eqref{p322}. Note that this $\phi_{\epsilon}$ also satisfies \eqref{p321}
and hence, this finishes the proof.

Write
\begin{equation}
\mathbf{p}_{\epsilon}(i)=\mathbf{a}_{\epsilon}(i)\,(L_{\mathbf{y}}\mathbf{f})(i)\,\int\zeta^{i}(\boldsymbol{x})\,\psi_{\epsilon}(\boldsymbol{x})\,\hat{\mu}_{\epsilon}(\boldsymbol{x})d\boldsymbol{x}\;\;;\;i\in S_{\star}\;,\label{eg4}
\end{equation}
so that $\mathbf{p}_{\epsilon}(i)=0$ for all $i\neq1,\,2$ because
of \eqref{eg3}. Note that if we add a constant $a$ to $\psi_{\epsilon}$,
then the value of $\mathbf{p}_{\epsilon}(i)$ for $i=1,2$ changes
to $\mathbf{p}'_{\epsilon}(i)$, with
\[
\mathbf{p}'_{\epsilon}(1)=\mathbf{p}_{\epsilon}(1)+ab\;,\;\;\;\mathbf{p}'_{\epsilon}(2)=\mathbf{p}_{\epsilon}(2)-ab\;,\ \ \ \mathbf{p}_{\epsilon}(i)=0\;,
\]
for $i\neq1,2$, where
\[
b=Z_{\epsilon}^{-1}\,(2\pi\epsilon)^{d/2}\,e^{-h/\epsilon}\,\nu_{\star}\;.
\]
Hence, by adding a constant to $\psi_{\epsilon}$ if necessary, we
can assume without loss of generality that $\mathbf{p}_{\epsilon}(1)=\mathbf{p}_{\epsilon}(2)$.
Set
\begin{equation}
\lambda_{\epsilon}:=-\mathbf{p}_{\epsilon}(1)=-\mathbf{p}_{\epsilon}(2)\;.\label{lame}
\end{equation}
We now multiply both sides of the equation \eqref{psie} by $-\psi_{\epsilon}$
and integrate with respect to the invariant measure $\mu_{\epsilon}$
to deduce
\begin{equation}
\theta_{\epsilon}\,\mathscr{D}_{\epsilon}(\psi_{\epsilon})=2\lambda_{\epsilon}\;.\label{hg2}
\end{equation}
Consequently, $\lambda_{\epsilon}>0$ and furthermore, by \eqref{hg1},
\eqref{lame}, and \eqref{hg2} we obtain
\begin{equation}
\mathscr{I}_{\epsilon}(\psi_{\epsilon})=-\lambda_{\epsilon}\;.\label{hg3}
\end{equation}

\subsection{\label{sec52}Lower bound on $\lambda_{\epsilon}$}

In this subsection, we prove a rough lower bound for $\lambda_{\epsilon}$
in Proposition \ref{p53}.

We start by providing some relations between Dirichlet forms $D_{\mathbf{x}}(\cdot,\,\cdot)$
and $D_{\mathbf{y}}(\cdot,\,\cdot)$. For $\mathbf{u}:S_{\star}\rightarrow\mathbb{R}$
and $\mathbf{u}':S\rightarrow\mathbb{R}$, we say that $\mathbf{u}'$
is an extension of $\mathbf{u}$ if $\mathbf{u}'(i)=\mathbf{u}(i)$
for all $i\in S_{\star}$. For $\mathbf{u}:S_{\star}\rightarrow\mathbb{R}$,
we define the harmonic extension $\widetilde{\mathbf{u}}:S\rightarrow\mathbb{R}$
of $\mathbf{u}$ as the extension of $\mathbf{u}$ satisfying
\begin{equation}
(L_{\mathbf{x}}\widetilde{\mathbf{u}})(i)=0\;\;\text{for all }i\in S\setminus S_{\star}\;.\label{ecc2}
\end{equation}
The following lemma will be used in several instances in the remaining
part of the article.
\begin{lem}
\label{lem54}For all $\mathbf{u},\,\mathbf{v}:S_{\star}\rightarrow\mathbb{R}$,
the following properties hold.
\begin{enumerate}
\item For harmonic extension $\widetilde{\mathbf{u}}$ and $\widetilde{\mathbf{v}}$
of $\mathbf{u}$ and $\mathbf{v}$, respectively, we have
\begin{equation}
D_{\mathbf{x}}(\widetilde{\mathbf{u}},\,\widetilde{\mathbf{v}})=\nu_{\star}D_{\mathbf{y}}(\mathbf{u},\,\mathbf{v})\;.\label{euv}
\end{equation}
\item For any extensions $\mathbf{v}_{1},\,\mathbf{v}_{2}:S\rightarrow\mathbb{R}$
of $\mathbf{v}$, we have
\begin{equation}
D_{\mathbf{x}}(\widetilde{\mathbf{u}},\,\mathbf{v}_{1})=D_{\mathbf{x}}(\widetilde{\mathbf{u}},\,\mathbf{v}_{2})\;.\label{euv2}
\end{equation}
\end{enumerate}
\end{lem}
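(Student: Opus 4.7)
The plan is to handle (2) first, since it is essentially immediate, and then use the structure of harmonic extensions to reduce (1) to an algebraic identity relating the Dirichlet form of $\mathbf{x}$ on equilibrium potentials to the coefficients $\beta_{i,j}$ defined in \eqref{beta}. For part (2), I use the symmetric form of $D_{\mathbf{x}}$ from \eqref{dxfg} to write
\[
D_{\mathbf{x}}(\widetilde{\mathbf{u}},\,\mathbf{v}_{1}-\mathbf{v}_{2})\;=\;\sum_{i\in S}\mu(i)\,(\mathbf{v}_{1}-\mathbf{v}_{2})(i)\,(-L_{\mathbf{x}}\widetilde{\mathbf{u}})(i)\;.
\]
Since $\mathbf{v}_{1}-\mathbf{v}_{2}$ vanishes on $S_{\star}$ by the extension hypothesis, and $L_{\mathbf{x}}\widetilde{\mathbf{u}}$ vanishes on $S\setminus S_{\star}$ by the harmonic extension property \eqref{ecc2}, the sum vanishes term by term. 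This proves \eqref{euv2}.

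For part (1), I would expand $\widetilde{\mathbf{u}}$ in a convenient basis. For each $k\in S_{\star}$, let $\mathbf{h}_{k}:S\to[0,1]$ be the harmonic extension of the Kronecker function $\mathbf{e}_{k}$ on $S_{\star}$. Uniqueness of the solution to the Dirichlet problem \eqref{epx1} identifies $\mathbf{h}_{k}$ with the equilibrium potential $\mathbf{h}_{\{k\},S_{\star}\setminus\{k\}}$ of the chain $\mathbf{x}(t)$. By linearity of the harmonic extension map, $\widetilde{\mathbf{u}}=\sum_{k\in S_{\star}}\mathbf{u}(k)\mathbf{h}_{k}$ and similarly for $\widetilde{\mathbf{v}}$, so
\[
D_{\mathbf{x}}(\widetilde{\mathbf{u}},\,\widetilde{\mathbf{v}})=\sum_{k,l\in S_{\star}}\mathbf{u}(k)\mathbf{v}(l)\,D_{\mathbf{x}}(\mathbf{h}_{k},\,\mathbf{h}_{l})\;.
\]

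The crux, and the step that carries the content of the lemma, is identifying the cross Dirichlet forms $D_{\mathbf{x}}(\mathbf{h}_{k},\mathbf{h}_{l})$ with $-\beta_{k,l}$. The key observation is that sums of equilibrium potentials over disjoint one-point source sets are themselves equilibrium potentials: the function $\mathbf{h}_{k}+\mathbf{h}_{l}$ equals $1$ on $\{k,l\}$, equals $0$ on $S_{\star}\setminus\{k,l\}$, and is harmonic on $S\setminus S_{\star}$, so by uniqueness it equals $\mathbf{h}_{\{k,l\},S_{\star}\setminus\{k,l\}}$. Similarly $\sum_{k\in S_{\star}}\mathbf{h}_{k}\equiv 1$. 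Computing $D_{\mathbf{x}}(\mathbf{h}_{k}+\mathbf{h}_{l},\mathbf{h}_{k}+\mathbf{h}_{l})$ two ways and using the definition of $\textup{cap}_{\mathbf{x}}$ gives
\[
D_{\mathbf{x}}(\mathbf{h}_{k},\,\mathbf{h}_{l})\;=\;\tfrac{1}{2}\left[\textup{cap}_{\mathbf{x}}(\{k,l\},S_{\star}\setminus\{k,l\})-\textup{cap}_{\mathbf{x}}(\{k\},S_{\star}\setminus\{k\})-\textup{cap}_{\mathbf{x}}(\{l\},S_{\star}\setminus\{l\})\right]=-\beta_{k,l}
\]
for $k\neq l$, and then $D_{\mathbf{x}}(\mathbf{h}_{k},\mathbf{h}_{k})=-\sum_{l\neq k}D_{\mathbf{x}}(\mathbf{h}_{k},\mathbf{h}_{l})=\sum_{l\neq k}\beta_{k,l}$ follows from $\sum_{l}\mathbf{h}_{l}\equiv 1$. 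Substituting these two formulas into the bilinear expansion above and regrouping (using the symmetry $\beta_{k,l}=\beta_{l,k}$ and the convention $\beta_{k,k}=0$) produces
\[
D_{\mathbf{x}}(\widetilde{\mathbf{u}},\,\widetilde{\mathbf{v}})\;=\;\tfrac{1}{2}\sum_{k,l\in S_{\star}}\beta_{k,l}\,[\mathbf{u}(l)-\mathbf{u}(k)]\,[\mathbf{v}(l)-\mathbf{v}(k)]\;=\;\nu_{\star}\,D_{\mathbf{y}}(\mathbf{u},\,\mathbf{v})\;,
\]
proving \eqref{euv}. The only nontrivial step is the identification of the cross Dirichlet forms with $-\beta_{k,l}$; once this algebraic identity is established, the rest is pure bookkeeping in finite-dimensional linear algebra.
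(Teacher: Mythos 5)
Your argument is correct and follows essentially the same route as the paper's: part (2) is verbatim the paper's one-line computation, and for part (1) you use the identical chain of ideas — identifying the harmonic extensions of the $\mathbf{e}_k$ with equilibrium potentials, extracting $D_{\mathbf{x}}(\mathbf{h}_k,\mathbf{h}_l)=-\beta_{k,l}$ via polarization of capacities, obtaining the diagonal term $D_{\mathbf{x}}(\mathbf{h}_k,\mathbf{h}_k)=\sum_{l\neq k}\beta_{k,l}$ from $\sum_l\mathbf{h}_l\equiv 1$, and then matching against the definition of $D_{\mathbf{y}}$. The only cosmetic difference is that the paper invokes bilinearity to reduce to checking $(\mathbf{u},\mathbf{v})=(\mathbf{e}_i,\mathbf{e}_j)$ directly, while you carry out the bilinear expansion and regrouping explicitly.
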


\begin{proof}
For part (1), recall the function $\mathbf{e}_{i}$, $i\in S_{\star}$,
that was defined in \eqref{estar}. Since both $D_{\mathbf{x}}(\cdot,\,\cdot)$
and $D_{\mathbf{y}}(\cdot,\,\cdot)$ are bi-linear forms, it suffices
to check \eqref{euv} for $(\mathbf{u},\,\mathbf{v})=(\mathbf{e}_{i},\,\mathbf{e}_{j})$
for $i\in S_{\star}$ and $j\in S_{\star}$. By \eqref{epx1}, the
harmonic extension of $\mathbf{e}_{i}$, namely $\mathbf{\widetilde{e}}_{i}:S\rightarrow\mathbb{R}$,
is the equilibrium potential between $\{i\}$ and $S_{\star}\setminus\{i\}$,
with respect to the process $\mathbf{x}(\cdot)$, and hence we have
\begin{equation}
D_{\mathbf{x}}(\mathbf{\widetilde{e}}_{i},\,\mathbf{\widetilde{e}}_{i})=\textup{cap}_{\mathbf{x}}(\{i\},\,S_{\star}\setminus\{i\})\;.\label{euv3}
\end{equation}
Similarly, for $i\neq j\in S_{\star},$ the function $\mathbf{\widetilde{e}}_{i}+\widetilde{\mathbf{e}}_{j}:S\rightarrow\mathbb{R}$
is the equilibrium potential between $\{i,\,j\}$ and $S_{\star}\setminus\{i,\,j\}$,
with respect to the process $\mathbf{x}(\cdot)$, and therefore it
holds
\begin{equation}
D_{\mathbf{x}}(\mathbf{\widetilde{e}}_{i}+\widetilde{\mathbf{e}}_{j},\,\mathbf{\widetilde{e}}_{i}+\widetilde{\mathbf{e}}_{j})=\textup{cap}_{\mathbf{x}}(\{i,j\},\,S_{\star}\setminus(\{i\}\cap\{j\}))\;.\label{euv4}
\end{equation}
By \eqref{euv3}, \eqref{euv4} and the bi-linearity of $D_{\mathbf{x}}$,
we have
\begin{equation}
D_{\mathbf{x}}(\mathbf{\widetilde{e}}_{i},\,\widetilde{\mathbf{e}}_{j})=-\beta_{i,\,j}\;\;;\;i\neq j\in S_{\star}\;.\label{euv5}
\end{equation}
It also follows from the definition
\begin{equation}
D_{\mathbf{y}}(\mathbf{{e}}_{i},\,{\mathbf{e}}_{j})=\frac{1}{2\nu_{\star}}\beta_{i,\,j}(0-1)(1-0)+\frac{1}{2\nu_{\star}}\beta_{j,\,i}(1-0)(0-1)=-\frac{\beta_{i,\,j}}{\nu_{\star}}\;.\label{euv6}
\end{equation}
From \eqref{euv5} and \eqref{euv6}, we deduce \eqref{euv} for $(\mathbf{u},\,\mathbf{v})=(\mathbf{e}_{i},\,\mathbf{e}_{j})$
with $i\neq j$.

Now, we turn to the case $(\mathbf{u},\,\mathbf{v})=(\mathbf{e}_{i},\,\mathbf{e}_{i})$
for some $i\in S_{\star}$. For this case, since $\sum_{j\in S_{\star}}\mathbf{e}_{j}=1$
on $S_{\star}$, it is immediate that $\sum_{j\in S_{\star}}\widetilde{\mathbf{e}}_{j}=1$
on $S$. Therefore,
\[
D_{\mathbf{x}}\bigg(\mathbf{\widetilde{e}}_{i},\,\sum_{j\in S_{\star}}\widetilde{\mathbf{e}}_{j}\bigg)=0\;.
\]
By this equation, \eqref{euv5}, and the bi-linearity of $D_{\mathbf{x}}$,
we obtain
\[
D_{\mathbf{x}}(\mathbf{\widetilde{e}}_{i},\,\mathbf{\widetilde{e}}_{i})=-\sum_{j\in S_{\star}:j\neq i}D_{\mathbf{x}}(\mathbf{\widetilde{e}}_{i},\,\mathbf{\widetilde{e}}_{j})=\sum_{j\in S_{\star}:j\neq i}\beta_{i,\,j}\;.
\]
This finishes the proof for part (1) since by the direct computation
we can verify that $D_{\mathbf{y}}(\mathbf{{e}}_{i},\,{\mathbf{e}}_{i})=\nu_{\star}^{-1}\sum_{j\in S_{\star}:j\neq i}\beta_{i,\,j}$.

For part (2), by the definition \eqref{dxfg} of $D_{\mathbf{x}}$,
we can write
\[
D_{\mathbf{x}}(\widetilde{\mathbf{u}},\,\mathbf{v}_{1}-\mathbf{v}_{2})=\sum_{i\in S}\mu(i)\,(-L_{\mathbf{x}}\mathbf{\widetilde{u}})(i)\,(\mathbf{v}_{1}(i)-\mathbf{v}_{2}(i))\;.
\]
The last summation is $0$ since $(L_{\mathbf{x}}\mathbf{\widetilde{u}})(i)=0$
for $i\in S\setminus S_{\star}$ and $\mathbf{v}_{1}(i)-\mathbf{v}_{2}(i)=0$
for $i\in S_{\star}$. This completes the proof.
\end{proof}
Now we are ready to establish an a priori lower bound on $\lambda_{\epsilon}$.
We remark that a sharp asymptotic of $\lambda_{\epsilon}$ will be
given in Section \ref{s56}. Recall that we have fixed $\text{\ensuremath{\mathbf{f}}}$
as in \eqref{eg3}.
\begin{prop}
\label{p53}We have
\[
\lambda_{\epsilon}\ge(1/2)D_{\mathbf{y}}(\mathbf{f},\mathbf{f})+o_{\epsilon}(1)\;.
\]
\end{prop}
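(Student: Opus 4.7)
The plan is to exploit the variational characterization of $\psi_\epsilon$ as the minimizer of $\mathscr{I}_\epsilon$, test it against an appropriately rescaled member of the family of test functions $\{F_\epsilon^{\mathbf{q}}\}_{\mathbf{q}\in\mathbb{R}^S}$ constructed in Section~\ref{sec3}, and then optimize over the scalar parameter. Concretely, let $\widetilde{\mathbf{f}}\in\mathbb{R}^S$ denote the harmonic extension of $\mathbf{f}$ in the sense of \eqref{ecc2}, and for $t\in\mathbb{R}$ consider the competitor $\phi_t=tF_\epsilon^{\widetilde{\mathbf{f}}}$. By construction (see \eqref{fq}), on each valley $\mathcal{V}_i$ with $i\in S_\star$ we have $F_\epsilon^{\widetilde{\mathbf{f}}}\equiv\widetilde{\mathbf{f}}(i)=\mathbf{f}(i)$, so the linear term in $\mathscr{I}_\epsilon(\phi_t)$ can be computed exactly.

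First I would expand
\[
\mathscr{I}_\epsilon(\phi_t) \;=\; \tfrac{t^2}{2}\,\theta_\epsilon\,\mathscr{D}_\epsilon(F_\epsilon^{\widetilde{\mathbf{f}}})\;+\;t\sum_{i=1,2}\mathbf{a}_\epsilon(i)\,(L_{\mathbf{y}}\mathbf{f})(i)\,\mathbf{f}(i)\,\mu_\epsilon(\mathcal{V}_i)\;.
\]
Using \eqref{aei} together with the identity $\mathbf{a}_\epsilon(i)\mu_\epsilon(\mathcal{V}_i)=Z_\epsilon^{-1}(2\pi\epsilon)^{d/2}e^{-h/\epsilon}\nu_i$ and Proposition~\ref{p21}, one gets $\mathbf{a}_\epsilon(i)\mu_\epsilon(\mathcal{V}_i)=(1+o_\epsilon(1))\mu_\star(i)$, so that the linear coefficient equals
\[
\sum_{i\in S_\star}\mu_\star(i)\,\mathbf{f}(i)\,(L_{\mathbf{y}}\mathbf{f})(i)\,(1+o_\epsilon(1)) \;=\; -\,(1+o_\epsilon(1))\,D_{\mathbf{y}}(\mathbf{f},\mathbf{f})\;,
\]
where I used the definition of $D_{\mathbf{y}}$ (and the fact that $(L_{\mathbf{y}}\mathbf{f})(i)=0$ for $i\neq 1,2$). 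For the quadratic coefficient, Lemma~\ref{lem34} gives $\theta_\epsilon\mathscr{D}_\epsilon(F_\epsilon^{\widetilde{\mathbf{f}}})=(1+o_\epsilon(1))\nu_\star^{-1}D_{\mathbf{x}}(\widetilde{\mathbf{f}},\widetilde{\mathbf{f}})$, and Lemma~\ref{lem54}(1) converts this into $(1+o_\epsilon(1))D_{\mathbf{y}}(\mathbf{f},\mathbf{f})$.

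Once both coefficients are in hand, minimization of the quadratic-in-$t$ expression at the optimal value $t^\ast=D_{\mathbf{y}}(\mathbf{f},\mathbf{f})(1+o_\epsilon(1))/(\theta_\epsilon\mathscr{D}_\epsilon(F_\epsilon^{\widetilde{\mathbf{f}}}))$ yields
\[
\mathscr{I}_\epsilon(\phi_{t^\ast}) \;=\; -\tfrac{1}{2}\,D_{\mathbf{y}}(\mathbf{f},\mathbf{f})\,(1+o_\epsilon(1))\;.
\]
Since $\psi_\epsilon$ minimizes $\mathscr{I}_\epsilon$, combining this with \eqref{hg3} gives $-\lambda_\epsilon=\mathscr{I}_\epsilon(\psi_\epsilon)\le\mathscr{I}_\epsilon(\phi_{t^\ast})$, which rearranges to the desired bound $\lambda_\epsilon\ge\tfrac{1}{2}D_{\mathbf{y}}(\mathbf{f},\mathbf{f})+o_\epsilon(1)$, since $D_{\mathbf{y}}(\mathbf{f},\mathbf{f})$ is an $\epsilon$-independent constant.

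The only delicate points are bookkeeping ones: verifying that the value of $F_\epsilon^{\widetilde{\mathbf{f}}}$ on each $\mathcal{V}_i$ is precisely $\mathbf{f}(i)$ (so that the linear term picks up exactly $D_{\mathbf{y}}(\mathbf{f},\mathbf{f})$ rather than some larger Dirichlet form on all of $S$), and applying Lemma~\ref{lem54}(1) to the harmonic extension so that the Dirichlet form on $S$ collapses back to the one on $S_\star$. Both are immediate from the constructions, so no genuine obstacle appears; the argument is effectively a one-sided Dirichlet principle for the Poisson problem \eqref{psie}.
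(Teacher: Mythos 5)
Your proposal is correct and is essentially the same argument as the paper's: you test the minimizer against the competitor $F_\epsilon^{\widetilde{\mathbf{f}}}$, evaluate $\mathscr{I}_\epsilon$ via Lemmas~\ref{lem34} and \ref{lem54} and Proposition~\ref{p21}, and compare with $\mathscr{I}_\epsilon(\psi_\epsilon)=-\lambda_\epsilon$. The only cosmetic difference is that you scale the competitor by $t$ and optimize, but since the quadratic and linear coefficients both reduce to $(1+o_\epsilon(1))D_{\mathbf y}(\mathbf f,\mathbf f)/2$ and $-(1+o_\epsilon(1))D_{\mathbf y}(\mathbf f,\mathbf f)$ respectively, the optimal $t^\ast=1+o_\epsilon(1)$, and the paper simply takes $t=1$, yielding the identical bound.
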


\begin{proof}
Recall from Section \ref{s32} the test function $F_{\epsilon}^{\mathbf{\widetilde{\mathbf{f}}}}:\mathbb{R}^{d}\rightarrow\mathbb{R}$
where $\widetilde{\mathbf{f}}$ is the harmonic extension of $\mathbf{f}$
defined above. By \eqref{ep4}, \eqref{ep4'}, \eqref{inv} and Lemma
\ref{lem34},
\begin{align*}
\mathscr{I}_{\epsilon}(F_{\epsilon}^{\mathbf{\widetilde{\mathbf{f}}}}) & =\frac{1}{2}\nu_{\star}^{-1}D_{\mathbf{x}}(\widetilde{\mathbf{f}},\,\widetilde{\mathbf{f}})+\sum_{i\in S_{\star}}\text{\ensuremath{\mu_{\star}(i)\,}\ensuremath{(L_{\mathbf{y}}\mathbf{f})(i)}\,\ensuremath{\mathbf{f}(i)}}+o_{\epsilon}(1)\;.
\end{align*}
By Lemma \ref{lem54}, we can conclude that the right-hand side of
the previous display is equal to
\[
\frac{1}{2}D_{\mathbf{y}}(\mathbf{f},\,\mathbf{f})-\sum_{i\in S_{\star}}\mu_{\star}(i)\,(-\text{\ensuremath{L_{\mathbf{y}}\mathbf{f})(i)}\,\ensuremath{\mathbf{f}(i)}}+o_{\epsilon}(1)=-\frac{1}{2}D_{\mathbf{y}}(\mathbf{f},\,\mathbf{f})+o_{\epsilon}(1)\;.
\]
The proof is completed by recalling that $\mathscr{I}_{\epsilon}(F_{\epsilon}^{\mathbf{\widetilde{\mathbf{f}}}})\ge\mathscr{I}_{\epsilon}(\psi_{\epsilon})=-\lambda_{\epsilon}$.
\end{proof}

\subsection{\label{s53}$L^{2}$-estimates based on Poincar\'{e}'s inequality }

Recall from Section \ref{s213} the small constant $a>0$ such that
there is no critical point $\boldsymbol{c}$ of $U$ satisfying $U(\boldsymbol{c})\in[H-a,\,H)$.
For $i\in S$, denote by\footnote{In fact, the set $\mathcal{V}_{i}^{(1)}$ is the same set with $\mathcal{W}_{i}^{o}$
defined in Section \ref{s213}; we use alternative notation here for
the notational convenience. } $\mathcal{V}_{i}^{(1)}$ and $\mathcal{V}_{i}^{(2)}$ the unique
connected component of $\{\boldsymbol{x}:U(\boldsymbol{x})\le H-a\}$
and of $\{\boldsymbol{x}:U(\boldsymbol{x})\le H-a/2\}$ contained
in $\mathcal{W}_{i}$, respectively. Thus, we have
\[
\mathcal{V}_{i}\subset\mathcal{V}_{i}^{(1)}\subset\mathcal{V}_{i}^{(2)}\subset\mathcal{W}_{i}\;.
\]

For $i\in S$, define
\[
\mathbf{q}_{\epsilon}(i)=\frac{1}{m_{d}(\mathcal{V}_{i}^{(1)})}\int_{\mathcal{V}_{i}^{(1)}}\psi_{\epsilon}(\boldsymbol{x})d\boldsymbol{x}\;\;\;\text{and\;}\;\;\widehat{\mathbf{q}}_{\epsilon}(i)=\frac{1}{m_{d}(\mathcal{V}_{i}^{(2)})}\int_{\mathcal{V}_{i}^{(2)}}\psi_{\epsilon}(\boldsymbol{x})d\boldsymbol{x}\;,
\]
where $m_{d}$ denotes the Lebesgue measure of $\mathbb{R}^{d}$.
\begin{prop}
\label{p55}There exists a constant $C>0$ such that the following
estimate holds for all $i\in S$:
\[
\left\Vert \psi_{\epsilon}-\mathbf{q}_{\epsilon}(i)\right\Vert _{L^{2}(\mathcal{V}_{i}^{(2)})}\le C\,e^{-a/(3\epsilon)}\,\lambda_{\epsilon}\;.
\]
\end{prop}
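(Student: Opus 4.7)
My plan is to combine a Poincar\'e inequality on the Lipschitz domain $\mathcal{V}_i^{(2)}$ with the energy identity $\theta_\epsilon\,\mathscr{D}_\epsilon(\psi_\epsilon) = 2\lambda_\epsilon$ from \eqref{hg2}. First I would establish that the nested pair $\mathcal{V}_i^{(1)} \subset \mathcal{V}_i^{(2)}$ admits a Poincar\'e inequality with constant uniform in $\epsilon$, centered on the mean $\mathbf{q}_\epsilon(i)$ taken over the subdomain $\mathcal{V}_i^{(1)}$:
\[
\|\psi_\epsilon - \mathbf{q}_\epsilon(i)\|_{L^2(\mathcal{V}_i^{(2)})} \le C\,\|\nabla \psi_\epsilon\|_{L^2(\mathcal{V}_i^{(2)})}.
\]
The $\epsilon$-uniformity rests on the fact that both $\mathcal{V}_i^{(1)}$ and $\mathcal{V}_i^{(2)}$ converge to the fixed well $\overline{\mathcal{W}}_i$ as $\delta\to 0$, while the separation \eqref{dvv} keeps them quantitatively apart, so that the ratio $m_d(\mathcal{V}_i^{(2)})/m_d(\mathcal{V}_i^{(1)})$ stays bounded and the intrinsic Poincar\'e constant of $\mathcal{V}_i^{(2)}$ stabilises as $\epsilon\to 0$.

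Next I would convert the unweighted $L^2$-norm of $\nabla\psi_\epsilon$ into the Dirichlet form via the pointwise bound $\hat\mu_\epsilon(\boldsymbol{x})^{-1} \le Z_\epsilon\, e^{(H-J^2\delta^2/8)/\epsilon}$, valid on $\mathcal{V}_i^{(2)}$ since $U \le H - J^2\delta^2/8$ there. This gives
\[
\int_{\mathcal{V}_i^{(2)}} |\nabla\psi_\epsilon|^2\, d\boldsymbol{x} \le \frac{Z_\epsilon}{\epsilon}\, e^{(H-J^2\delta^2/8)/\epsilon}\, \mathscr{D}_\epsilon(\psi_\epsilon).
\]
Plugging in $\mathscr{D}_\epsilon(\psi_\epsilon) = 2\lambda_\epsilon\, e^{-(H-h)/\epsilon}$ from \eqref{hg2}, the Laplace-method asymptotic $Z_\epsilon \simeq (2\pi\epsilon)^{d/2}\, e^{-h/\epsilon}\, \nu_\star$ from Proposition \ref{p21}, and the key identity $e^{-J^2\delta^2/(8\epsilon)} = \epsilon^{J^2/8}$ that follows from $\delta^2 = \epsilon\log(1/\epsilon)$, I obtain
\[
\int_{\mathcal{V}_i^{(2)}} |\nabla\psi_\epsilon|^2\, d\boldsymbol{x} \le C\,\epsilon^{d/2 - 1 + J^2/8}\, \lambda_\epsilon.
\]
Taking square roots and inserting this into the Poincar\'e inequality yields $\|\psi_\epsilon - \mathbf{q}_\epsilon(i)\|_{L^2(\mathcal{V}_i^{(2)})} \le C\,\epsilon^{J^2/16}\, \lambda_\epsilon^{1/2}$, where the ancillary factor $\epsilon^{(d-2)/4}$ is harmless for $d\ge 2$. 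Finally, Proposition \ref{p53} together with $D_\mathbf{y}(\mathbf{f},\mathbf{f}) > 0$ for $\mathbf{f}\in\mathbf{S}_{1,2}$ gives the uniform lower bound $\lambda_\epsilon \ge c > 0$ for small $\epsilon$, so $\lambda_\epsilon^{1/2} \le C\lambda_\epsilon$, producing the announced estimate.

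The main technical obstacle is the uniformity in $\epsilon$ of the Poincar\'e constant on the $\epsilon$-dependent domain $\mathcal{V}_i^{(2)}$, together with the legitimacy of centering on the mean over the smaller set $\mathcal{V}_i^{(1)}$. These are handled through the geometric observations above, but a careful proof would invoke the standard triangle-inequality trick $|\mathbf{q}_\epsilon(i) - \widehat{\mathbf{q}}_\epsilon(i)| \le m_d(\mathcal{V}_i^{(1)})^{-1/2}\|\psi_\epsilon - \widehat{\mathbf{q}}_\epsilon(i)\|_{L^2(\mathcal{V}_i^{(2)})}$ together with a monotonicity/compactness argument on the nested level sets of $U$ inside $\mathcal{W}_i$.
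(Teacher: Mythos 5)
Your proposal matches the paper's proof essentially step for step: Poincar\'e on $\mathcal{V}_i^{(2)}$, conversion of the unweighted gradient norm to the Dirichlet form via the sublevel bound $U\le H-J^2\delta^2/8$, substitution of \eqref{hg2}, \eqref{ep3}, and $\delta^2=\epsilon\log(1/\epsilon)$, the triangle-inequality adjustment $|\mathbf{q}_\epsilon(i)-\widehat{\mathbf{q}}_\epsilon(i)|$ to re-center on the mean over $\mathcal{V}_i^{(1)}$, and finally the lower bound on $\lambda_\epsilon$ from Proposition \ref{p53} to trade $\lambda_\epsilon^{1/2}$ for $\lambda_\epsilon$. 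The only cosmetic difference is ordering: the paper starts from Poincar\'e centered on $\widehat{\mathbf{q}}_\epsilon(i)$ and then shifts the center, whereas you announce the shifted inequality up front and defer its justification to the same triangle-inequality argument.
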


\begin{rem}
Here and elsewhere in this paper, $L^{p}$ norms are computed with
respect to the Lebesgue measure of $\mathbb{R}^{d}$.
\end{rem}

\begin{proof}
By Poincaré's inequality, the definition of $\mathcal{V}_{i}^{(2)}$,
\eqref{ep3}, and \eqref{hg2},
\begin{align*}
\int_{\mathcal{V}_{i}^{(2)}}\left|\psi_{\epsilon}(\boldsymbol{x})-\widehat{\mathbf{q}}_{\epsilon}(i)\right|^{2}d\boldsymbol{x} & \le C\int_{\mathcal{V}_{i}^{(2)}}|\nabla\psi_{\epsilon}(\boldsymbol{x})|^{2}d\boldsymbol{x}\le Ce^{(H-a/2)/\epsilon}\int_{\mathcal{V}_{i}^{(2)}}|\nabla\psi_{\epsilon}(\boldsymbol{x})|^{2}e^{-U(\boldsymbol{x})/\epsilon}d\boldsymbol{x}\\
 & \le Ce^{(H-a/2)/\epsilon}Z_{\epsilon}\epsilon^{-1}\mathcal{\mathscr{D}}_{\epsilon}(\psi_{\epsilon})\le Ce^{-a/(2\epsilon)}\epsilon^{d/2-1}\lambda_{\epsilon}\le Ce^{-a/(3\epsilon)}\lambda_{\epsilon}\;,
\end{align*}
From this and Cauchy-Schwarz's inequality we deduce,
\[
\left|\mathbf{q}_{\epsilon}(i)-\mathbf{\widehat{\mathbf{q}}}_{\epsilon}(i)\right|\le\frac{1}{m_{d}(\mathcal{V}_{i}^{(1)})}\int_{\mathcal{V}_{i}^{(1)}}\left|\psi_{\epsilon}(\boldsymbol{x})-\widehat{\mathbf{q}}_{\epsilon}(i)\right|d\boldsymbol{x}\le C\int_{\mathcal{V}_{i}^{(2)}}\left|\psi_{\epsilon}(\boldsymbol{x})-\widehat{\mathbf{q}}_{\epsilon}(i)\right|d\boldsymbol{x}\le Ce^{-a/(6\epsilon)}\lambda_{\epsilon}^{1/2}\;.
\]
Combining the above two bounds complete the proof.
\end{proof}

\subsection{\label{s54}$L^{\infty}$-estimates on valleys}

In this subsection, we use the interior elliptic regularity techniques
and a suitable bootstrapping argument to reinforce the $L^{2}$-estimate
in $\mathcal{V}_{i}^{(2)}$ that was obtained in Proposition \ref{p55}
to $L^{\infty}$-estimate in the smaller set $\mathcal{V}_{i}^{(1)}$.
This type of argument has been introduced originally in \cite{ET},
and is suitably modified to yield a desired $L^{\infty}$-estimate.

We henceforth write
\[
\frac{Z_{\epsilon}}{(2\pi\epsilon)^{d/2}\,e^{-h/\epsilon}\,\nu_{\star}}=1+\eta_{\epsilon}
\]
where $\eta_{\epsilon}=o_{\epsilon}(1)$ by \eqref{ep4}.
\begin{lem}
\label{lem57}We have
\begin{align*}
 & \left|(1+\eta_{\epsilon})\mathbf{p}_{\epsilon}(1)-\mathbf{q}_{\epsilon}(1)\right|\le\left\Vert \psi_{\epsilon}-\mathbf{q}_{\epsilon}(1)\right\Vert _{L^{\infty}(\mathcal{V}_{1}^{(1)})}\;\text{and}\\
 & \left|(1+\eta_{\epsilon})\mathbf{p}_{\epsilon}(2)+\mathbf{q}_{\epsilon}(2)\right|\le\left\Vert \psi_{\epsilon}-\mathbf{q}_{\epsilon}(2)\right\Vert _{L^{\infty}(\mathcal{V}_{2}^{(1)})}\;.
\end{align*}
\end{lem}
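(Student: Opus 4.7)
The plan is to recognize the left-hand sides as differences between two averages of $\psi_\epsilon$ on $\mathcal{V}_i$, and then bound this difference by a pointwise sup-norm via a standard averaging argument.

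First I would unfold the product $\mathbf{a}_\epsilon(i)(L_\mathbf{y}\mathbf{f})(i)$. Using the definitions $\mathbf{a}_\epsilon(i) = Z_\epsilon^{-1}(2\pi\epsilon)^{d/2}e^{-h/\epsilon}\nu_i/\mu_\epsilon(\mathcal{V}_i)$ and $(L_\mathbf{y}\mathbf{f})(1) = \nu_\star/\nu_1$, $(L_\mathbf{y}\mathbf{f})(2) = -\nu_\star/\nu_2$ coming from \eqref{eg3}, the $\nu_i$ factors cancel and we are left with $Z_\epsilon^{-1}(2\pi\epsilon)^{d/2}e^{-h/\epsilon}\nu_\star$ (up to a sign for $i=2$). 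By definition \eqref{eg5} of $\eta_\epsilon$, this equals $\pm 1/[(1+\eta_\epsilon)\mu_\epsilon(\mathcal{V}_i)]$. Consequently,
\[
(1+\eta_\epsilon)\mathbf{p}_\epsilon(1) = \frac{1}{\mu_\epsilon(\mathcal{V}_1)}\int_{\mathcal{V}_1}\psi_\epsilon(\boldsymbol{x})\,\hat{\mu}_\epsilon(\boldsymbol{x})\,d\boldsymbol{x},
\]
and similarly $(1+\eta_\epsilon)\mathbf{p}_\epsilon(2) = -\mu_\epsilon(\mathcal{V}_2)^{-1}\int_{\mathcal{V}_2}\psi_\epsilon\,\hat{\mu}_\epsilon\,d\boldsymbol{x}$.

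Next, since $\mathbf{q}_\epsilon(i)$ is a constant, one has $\mu_\epsilon(\mathcal{V}_i)^{-1}\int_{\mathcal{V}_i}\mathbf{q}_\epsilon(i)\,\hat{\mu}_\epsilon\,d\boldsymbol{x} = \mathbf{q}_\epsilon(i)$, so
\[
(1+\eta_\epsilon)\mathbf{p}_\epsilon(1) - \mathbf{q}_\epsilon(1) = \frac{1}{\mu_\epsilon(\mathcal{V}_1)}\int_{\mathcal{V}_1}\bigl[\psi_\epsilon(\boldsymbol{x}) - \mathbf{q}_\epsilon(1)\bigr]\hat{\mu}_\epsilon(\boldsymbol{x})\,d\boldsymbol{x},
\]
and analogously $(1+\eta_\epsilon)\mathbf{p}_\epsilon(2) + \mathbf{q}_\epsilon(2) = -\mu_\epsilon(\mathcal{V}_2)^{-1}\int_{\mathcal{V}_2}[\psi_\epsilon-\mathbf{q}_\epsilon(2)]\hat{\mu}_\epsilon\,d\boldsymbol{x}$.

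To conclude, I would pull the absolute value inside the integral and bound $|\psi_\epsilon - \mathbf{q}_\epsilon(i)|$ by its $L^\infty(\mathcal{V}_i)$ norm, noting that the remaining integral of $\hat\mu_\epsilon$ over $\mathcal{V}_i$ is exactly $\mu_\epsilon(\mathcal{V}_i)$, which cancels the prefactor. There is no real obstacle here: the argument is a pure bookkeeping exercise combining the explicit forms of $\mathbf{a}_\epsilon$, the prescribed values of $L_\mathbf{y}\mathbf{f}$, and the asymptotic identity \eqref{eg5}; the only subtlety is to keep track of the sign coming from $(L_\mathbf{y}\mathbf{f})(2) < 0$, which explains why the second inequality features $+\mathbf{q}_\epsilon(2)$ rather than $-\mathbf{q}_\epsilon(2)$.
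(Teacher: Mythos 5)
Your proposal is correct and matches the paper's proof essentially step for step: both expand $\mathbf{a}_\epsilon(i)(L_\mathbf{y}\mathbf{f})(i)$ via the explicit definitions and \eqref{eg5} to rewrite $(1+\eta_\epsilon)\mathbf{p}_\epsilon(i)$ as a $\mu_\epsilon$-average of $\psi_\epsilon$ over $\mathcal{V}_i$ (with the sign flip for $i=2$), then bound the deviation from the constant $\mathbf{q}_\epsilon(i)$ by the $L^\infty$ norm. The paper merely compresses the algebraic unfolding into a single cited display, so there is no substantive difference.
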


\begin{proof}
By \eqref{aei}, \eqref{eg3}, and \eqref{eg4}, we can write
\begin{align*}
\mathbf{p}_{\epsilon}(1) & =\frac{Z_{\epsilon}^{-1}\,(2\pi\epsilon)^{d/2}\,e^{-h/\epsilon}\,\nu_{1}}{\bar{\zeta}^{1}}\,\frac{\nu_{\star}}{\nu_{1}}\,\int\psi_{\epsilon}(\boldsymbol{x})\,\zeta^{1}(\boldsymbol{x})\,\hat{\mu}_{\epsilon}(\boldsymbol{x})d\boldsymbol{x}\\
 & =\frac{1}{1+\eta_{\epsilon}}\frac{1}{\bar{\zeta}^{1}}\int\psi_{\epsilon}(\boldsymbol{x})\,\zeta^{1}(\boldsymbol{x})\,\hat{\mu}_{\epsilon}(\boldsymbol{x})d\boldsymbol{x}
\end{align*}
Therefore, by the definition of $\bar{\zeta}^{1}$, we can write
\begin{align*}
\left|(1+\eta_{\epsilon})\mathbf{p}_{\epsilon}(1)-\mathbf{q}_{\epsilon}(1)\right| & =\left|\frac{1}{\bar{\zeta}^{1}}\int\,\left(\psi_{\epsilon}(\boldsymbol{x})-\mathbf{q}_{\epsilon}(1)\right)\,\zeta^{1}(\boldsymbol{x})\,\hat{\mu}_{\epsilon}(\boldsymbol{x})d\boldsymbol{x}\right|\\
 & \le\left\Vert \psi_{\epsilon}-\mathbf{q}_{\epsilon}(1)\right\Vert _{L^{\infty}(\mathcal{V}_{1}^{(1)})}
\end{align*}
where the last equality holds since the support of $\zeta^{1}$ is
a subset of $\mathcal{V}_{1}^{(1)}$. Thus, the estimate for $\mathbf{p}_{\epsilon}(1)$
follows. The proof for $\mathbf{p}_{\epsilon}(2)$ is identical.
\end{proof}
\begin{prop}
\label{p58}For all $i\in S$, we have
\[
\left\Vert \psi_{\epsilon}-\mathbf{q}_{\epsilon}(i)\right\Vert _{L^{\infty}(\mathcal{V}_{i}^{(1)})}=o_{\epsilon}(1)\lambda_{\epsilon}\;.
\]
\end{prop}

\begin{proof}
Fix $i\in S$. On $\mathcal{V}_{i}^{(2)}$, the function $\psi_{\epsilon}$
satisfies the equation
\[
\mathcal{\mathscr{L}_{\epsilon}}\psi_{\epsilon}=\theta_{\epsilon}^{-1}\mathbf{a}_{\epsilon}(i)\,\mathbf{g}(i)\,\zeta^{i}\;.
\]
where $\mathbf{g}=L_{\mathbf{y}}\mathbf{f}$. We can rewrite the equation
as
\[
\epsilon\Delta(\psi_{\epsilon}-\mathbf{q}_{\epsilon}(i))=\nabla\cdot\left[(\psi_{\epsilon}-\mathbf{q}_{\epsilon}(i))\nabla U\right]-(\psi_{\epsilon}-\mathbf{q}_{\epsilon}(i))\Delta U+\frac{C}{\theta_{\epsilon}}\zeta^{i}\ ,
\]
for some constant $C\ge0$. Then, by the local interior elliptic estimate
\cite[Theorem 8.17]{GT} with
\[
R:=\frac{1}{3}\min_{j\in S}\textup{dist}(\partial\mathcal{V}_{j}^{(1)},\,\partial\mathcal{V}_{j}^{(2)})\;,
\]
we obtain that, for any $p>d$ and for some constant $C_{p}>0$,
\[
\left\Vert \psi_{\epsilon}-\mathbf{q}_{\epsilon}(i)\right\Vert _{L^{\infty}(\mathcal{V}_{i}^{(1)})}\le C_{p}\left\Vert \psi_{\epsilon}-\mathbf{q}_{\epsilon}(i)\right\Vert _{L^{2}(\mathcal{V}_{i}^{(2)})}+\frac{C_{p}}{\epsilon}\left\Vert \psi_{\epsilon}-\mathbf{q}_{\epsilon}(i)\right\Vert _{L^{p}(\mathcal{V}_{i}^{(2)})}+o_{\epsilon}(1)\;.
\]
Let us select $p=2d$ for the sake of definiteness and let us write
$\left\Vert \psi_{\epsilon}\right\Vert _{\infty}:=\left\Vert \psi_{\epsilon}\right\Vert _{L^{\infty}(\mathbb{R}^{d})}$
for the simplicity of notation. Then, by Propositions \ref{p53},
\ref{p55}, H\"older's inequality, and the trivial fact that $|\mathbf{q}_{\epsilon}(i)|\le\left\Vert \psi_{\epsilon}\right\Vert _{\infty}$,
we obtain
\begin{equation}
\begin{aligned}\left\Vert \psi_{\epsilon}-\mathbf{q}_{\epsilon}(i)\right\Vert _{L^{\infty}(\mathcal{V}_{i}^{(1)})} & \le o_{\epsilon}(1)\lambda_{\epsilon}+\frac{C}{\epsilon}\left\Vert \psi_{\epsilon}-\mathbf{q}_{\epsilon}(i)\right\Vert _{L^{2}(\mathcal{V}_{i}^{(2)})}^{1/d}\left\Vert \psi_{\epsilon}-\mathbf{q}_{\epsilon}(i)\right\Vert _{L^{\infty}(\mathcal{V}_{i}^{(2)})}^{1-(1/d)}\\
 & =o_{\epsilon}(1)\left[\lambda_{\epsilon}+\lambda_{\epsilon}^{1/d}\left\Vert \psi_{\epsilon}-\mathbf{q}_{\epsilon}(i)\right\Vert _{L^{\infty}(\mathcal{V}_{i}^{(2)})}^{1-(1/d)}\right]\;,\\
 & \le o_{\epsilon}(1)\left[\lambda_{\epsilon}+\lambda_{\epsilon}^{1/d}\left\Vert \psi_{\epsilon}\right\Vert _{\infty}^{1-(1/d)}\right]\;.\\
 & \le o_{\epsilon}(1)\left[\lambda_{\epsilon}+\left\Vert \psi_{\epsilon}\right\Vert _{\infty}\right]\;.
\end{aligned}
\label{se1}
\end{equation}
Now we present a bootstrapping argument. Write
\[
\mathbf{m}_{\epsilon}(i)=\left\Vert \psi_{\epsilon}\right\Vert _{L^{\infty}(\mathcal{V}_{i}^{(1)})}\mbox{ for }i\in S\;\;\;\mbox{and}\;\;\;\xi=\xi_{\epsilon}=\max\{\mathbf{m}_{\epsilon}(1),\,\mathbf{m}_{\epsilon}(2)\}
\]
Then, it holds that $\Vert\psi_{\epsilon}||_{\infty}=\xi$, since
otherwise $\mathscr{I}(u_{\epsilon}\circ\psi_{\epsilon})<\mathscr{I}(\psi_{\epsilon})$
where
\[
u_{\epsilon}(t)=\begin{cases}
\Vert\psi_{\epsilon}||_{\infty} & \text{if }t\ge\xi\;,\\
t & \text{if }|t|<\Vert\psi_{\epsilon}||_{\infty\;,}\\
-\Vert\psi_{\epsilon}||_{\infty} & \text{if }t\le-\xi\;.
\end{cases}
\]
Thus we can write $\Vert\psi_{\epsilon}||_{\infty}=\mathbf{m}_{\epsilon}(k)$
where $k$ is either $1$ or $2$. Then,
\begin{equation}
\Vert\psi_{\epsilon}||_{\infty}=\mathbf{m}_{\epsilon}(k)=\left\Vert \psi_{\epsilon}\right\Vert _{L^{\infty}(\mathcal{V}_{k}^{(1)})}\le\left\Vert \psi_{\epsilon}-\mathbf{q}_{\epsilon}(k)\right\Vert _{L^{\infty}(\mathcal{V}_{k}^{(1)})}+|\mathbf{q}_{\epsilon}(k)|\;.\label{se2}
\end{equation}
By Lemma \ref{lem57} and \eqref{lame}, we have that
\begin{equation}
|\mathbf{q}_{\epsilon}(k)|\le(1+o_{\epsilon}(1))\lambda_{\epsilon}+\left\Vert \psi_{\epsilon}-\mathbf{q}_{\epsilon}(k)\right\Vert _{L^{\infty}(\mathcal{V}_{k}^{(1)})}\;.\label{se3}
\end{equation}
By combining \eqref{se2} and \eqref{se3}, we obtain
\begin{equation}
\Vert\psi_{\epsilon}||_{\infty}\le(1+o_{\epsilon}(1))\lambda_{\epsilon}+2\left\Vert \psi_{\epsilon}-\mathbf{q}_{\epsilon}(k)\right\Vert _{L^{\infty}(\mathcal{V}_{k}^{(1)})}\;.\label{se4}
\end{equation}
Inserting \eqref{se4} into \eqref{se1} with $i=k$ yields
\begin{equation}
\left\Vert \psi_{\epsilon}-\mathbf{q}_{\epsilon}(k)\right\Vert _{L^{\infty}(\mathcal{V}_{k}^{(1)})}\le o_{\epsilon}(1)\lambda_{\epsilon}\;.\label{se5}
\end{equation}
By \eqref{se4} and \eqref{se5}, we have
\begin{equation}
\Vert\psi_{\epsilon}||_{\infty}\le(1+o_{\epsilon}(1))\lambda_{\epsilon}\;.\label{se6}
\end{equation}
Finally, inserting this into \eqref{se1} finishes the proof.
\end{proof}

\subsection{\label{subsec:s45}Extension of flatness of $\psi_{\epsilon}$}

For $i\in S$, we denote by $\mathcal{V}_{i}^{(3)}$ the unique connected
component of the set
\[
\Omega_{\epsilon}=\,\Big\{\,\boldsymbol{x}:U(\boldsymbol{x})\le H-\frac{1}{4}J^{2}\delta^{2}\,\Big\}\;.
\]
contained in the set $\mathcal{W}_{i}$. Note that we now have
\[
\mathcal{V}_{i}\subset\mathcal{V}_{i}^{(1)}\subset\mathcal{V}_{i}^{(2)}\subset\mathcal{V}_{i}^{(3)}\subset\mathcal{W}_{i}\;.
\]
Note that the sets $\mathcal{V}_{i}^{(1)}$ and $\mathcal{V}_{i}^{(2)}$
are independent of $\epsilon$, while the set $\mathcal{V}_{i}^{(3)}$
depends on $\epsilon$. In the following proposition, we extend the
flatness result obtained in Proposition \ref{p58} for $\psi_{\epsilon}$
on $\mathcal{V}_{i}^{(1)}$ to $\mathcal{V}_{i}^{(3)}$.
\begin{prop}
\label{p49}For all $i\in S$, we have
\[
\left\Vert \psi_{\epsilon}-\mathbf{q}_{\epsilon}(i)\right\Vert _{L^{\infty}(\mathcal{V}_{i}^{(3)})}=o_{\epsilon}(1)\lambda_{\epsilon}\;.
\]
\end{prop}

We divide the proof of this proposition into several lemmas. We write
\[
\mathcal{V}^{(k)}=\bigcup_{j\in S}\mathcal{V}_{j}^{(k)}\;\;\;\;\;;\;k\in\{1,\,2,\,3\}\;.
\]
For $i\in S$, we denote by $\phi_{\epsilon}^{i}:\mathbb{R}^{d}\rightarrow\mathbb{R}$
the unique solution of the boundary problem
\[
\begin{cases}
(\mathcal{L}_{\epsilon}\phi_{\epsilon}^{i})(\boldsymbol{x})=0 & \text{ if }\boldsymbol{x}\in\mathbb{R}^{d}\setminus\mathcal{V}^{(1)}\;,\\
\phi_{\epsilon}^{i}(\boldsymbol{x})=\mathbf{1}\{\boldsymbol{x}\in\mathcal{V}_{i}^{(1)}\} & \text{ if }\boldsymbol{x}\in\mathcal{V}^{(1)}\;.
\end{cases}
\]
The function $\phi_{\epsilon}^{i}$ is called the equilibrium potential
between $\mathcal{V}^{(1)}$ and $\mathcal{V}^{(1)}\setminus\mathcal{V}_{i}^{(1)}$.
\begin{lem}
\label{lem410}For $i\in S$, it holds that
\[
\phi_{\epsilon}^{i}(\boldsymbol{x})=\begin{cases}
1-o_{\epsilon}(1) & \text{if }\boldsymbol{x}\in\mathcal{V}_{i}^{(3)}\;,\\
o_{\epsilon}(1) & \text{if }\boldsymbol{x}\in\mathcal{V}^{(3)}\setminus\mathcal{V}_{i}^{(3)}\;.
\end{cases}
\]
\end{lem}

\begin{proof}
By \cite[Corollary 4.8]{BEGK2} with $A=\mathcal{V}^{(1)}\setminus\mathcal{V}_{i}^{(1)}$
and $D=\mathcal{V}_{i}^{(1)}$ we can deduce that, for all $\boldsymbol{x}\in\mathcal{V}^{(3)}\setminus\mathcal{V}_{i}^{(3)}$
\begin{equation}
\phi_{\epsilon}^{i}(\boldsymbol{x})\le C\epsilon^{-1/2}e^{-[H-(H-\frac{1}{4}J^{2}\delta^{2})]/\epsilon}=C\epsilon^{J^{2}/4-1/2}=o_{\epsilon}(1)\label{e4101}
\end{equation}
since we take $J$ large enough. On the other hand, with the selection
$A=\mathcal{V}_{i}^{(1)}$ and $D=\mathcal{V}^{(1)}\setminus\mathcal{V}_{i}^{(1)}$,
we obtain from \cite[Corollary 4.8]{BEGK2} that\footnote{We implicitly use $h_{A,\,B}=1-h_{B,\,A}$ where $h_{A,\,B}$ is the
equilibrium potential introduced in \cite{BEGK2}.}, for all $\boldsymbol{x}\in\mathcal{V}_{i}^{(3)}$,
\begin{equation}
1-\phi_{\epsilon}^{i}(\boldsymbol{x})\le C\epsilon^{-1/2}e^{-[H-(H-\frac{1}{4}J^{2}\delta^{2})]/\epsilon}=C\epsilon^{J^{2}/4-1/2}=o_{\epsilon}(1)\;.\label{e4102}
\end{equation}
The proof is completed by \eqref{e4101} and \eqref{e4102}.
\end{proof}
Now we define $\widetilde{\psi}_{\epsilon}:\mathbb{R}^{d}\rightarrow\mathbb{R}$
as
\[
\widetilde{\psi}_{\epsilon}(\boldsymbol{x})=\sum_{i\in S}\mathbf{q}_{\epsilon}(i)\,\phi_{\epsilon}^{i}(\boldsymbol{x})\;\;\;\;\;;\;\boldsymbol{x}\in\mathbb{R}^{d}\;.
\]
Then, we can readily deduce the following estimate from the previous
lemma.
\begin{lem}
\label{lem411}For all $i\in S$, we have
\[
\left\Vert \widetilde{\psi}_{\epsilon}-\mathbf{q}_{\epsilon}(i)\right\Vert _{L^{\infty}(\mathcal{V}_{i}^{(3)})}=o_{\epsilon}(1)\lambda_{\epsilon}\;.
\]
\end{lem}

\begin{proof}
Fix $i\in S$. For $\boldsymbol{x}\in\mathcal{V}_{i}^{(3)}$, we can
write
\[
\widetilde{\psi}_{\epsilon}(\boldsymbol{x})-\mathbf{q}_{\epsilon}(i)=-\mathbf{q}_{\epsilon}(i)(1-\phi_{\epsilon}^{i}(\boldsymbol{x}))+\sum_{j\in S\setminus\{i\}}\mathbf{q}_{\epsilon}(j)\,\phi_{\epsilon}^{j}(\boldsymbol{x})
\]
Since $1-\phi_{\epsilon}^{i}(\boldsymbol{x})=o_{\epsilon}(1)$ and
$\phi_{\epsilon}^{j}(\boldsymbol{x})=o_{\epsilon}(1)$ for all $j\in S\setminus\{i\}$
by Lemma \ref{lem410}, we obtain
\[
|\widetilde{\psi}_{\epsilon}(\boldsymbol{x})-\mathbf{q}_{\epsilon}(i)|=o_{\epsilon}(1)\max_{i\in S}|\mathbf{q}_{\epsilon}(i)|\le o_{\epsilon}(1)\Vert\psi_{\epsilon}\Vert_{\infty}\;.
\]
It suffices to recall \eqref{se6} to complete the proof.
\end{proof}
We finally claim that $\widetilde{\psi}_{\epsilon}$ approximates
$\psi_{\epsilon}$.
\begin{lem}
\label{lem412}For all $i\in S$, we have
\[
\left\Vert \widetilde{\psi}_{\epsilon}-\psi_{\epsilon}\right\Vert _{\infty}=o_{\epsilon}(1)\lambda_{\epsilon}\;.
\]
\end{lem}

\begin{proof}
Write $\widehat{\psi}_{\epsilon}=\widetilde{\psi}_{\epsilon}-\psi_{\epsilon}$.
First, by Proposition \ref{p58} and Lemmas \ref{lem411}, we have
that
\begin{equation}
\Vert\widehat{\psi}_{\epsilon}\Vert_{L^{\infty}(\mathcal{V}^{(1)})}=o_{\epsilon}(1)\lambda_{\epsilon}\;.\label{e4121}
\end{equation}
Since we have $\mathcal{L}_{\epsilon}\widehat{\psi}_{\epsilon}\equiv0$
on $\mathbb{R}^{d}\setminus\mathcal{V}^{(1)}$, we can write
\[
\widehat{\psi}_{\epsilon}(\boldsymbol{x})=\mathbb{E}_{\boldsymbol{x}}^{\epsilon}[\widehat{\psi}_{\epsilon}(\boldsymbol{x}_{\epsilon}(\tau_{\mathcal{V}^{(1)}}))]\;\;\;\text{for all }\boldsymbol{x}\in\mathbb{R}^{d}\setminus\mathcal{V}^{(1)}\;,
\]
where $\tau_{\mathcal{V}^{(1)}}$ denotes the hitting time of the
set $\mathcal{V}^{(1)}$. Hence, by \eqref{e4121}, we also have
\begin{equation}
\Vert\widehat{\psi}_{\epsilon}\Vert_{L^{\infty}(\mathbb{R}^{d}\setminus\mathcal{V}^{(1)})}\le\Vert\widehat{\psi}_{\epsilon}\Vert_{L^{\infty}(\mathcal{V}^{(1)})}=o_{\epsilon}(1)\lambda_{\epsilon}\;.\label{e4122}
\end{equation}
The proof is completed by \eqref{e4121} and \eqref{e4122}.
\end{proof}
Now we are ready to conclude the proof of Proposition \ref{p49}
\begin{proof}[Proof of Proposition \ref{p49}]
The proof directly follows from Lemmas \ref{lem411} and \ref{lem412}.
\end{proof}

\subsection{\label{s55}Characterization of $\mathbf{q}_{\epsilon}$ on deepest
valleys}

In the previous subsection, we proved that if the constant $\lambda_{\epsilon}$
is bounded above, then for every $i\in S$, the function $\psi_{\epsilon}(x)-\mathbf{q}_{\epsilon}(i)$
is almost $0$ in each valley $\mathcal{V}_{i}^{(1)}$. This boundedness
of $\lambda_{\epsilon}$ will be established later in \eqref{bdl}.
In this subsection, we shall prove that, for each $i\in S_{\star}$,
the value $\mathbf{q}_{\epsilon}(i)$ is close to $\mathbf{f}(i)$
up to a constant $c_{\epsilon}$ that does not depend on $i$. The
following is a formulation of this result.
\begin{prop}
\label{p59}For all small enough $\epsilon>0$, there exists a constant
$c_{\epsilon}$ such that, for all $i\in S_{\star}$,
\[
\left|\mathbf{q}_{\epsilon}(i)-\mathbf{f}(i)-c_{\epsilon}\right|=o_{\epsilon}(1)\,\lambda_{\epsilon}\;.
\]
\end{prop}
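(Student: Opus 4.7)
The plan is to reduce the proposition to a finite-dimensional variational identity on $\mathbb{R}^{S}$. Writing $\widetilde{\mathbf{f}}\in\mathbb{R}^{S}$ for the harmonic extension of $\mathbf{f}$ from $S_{\star}$ to $S$, setting $\mathbf{r}_{\epsilon}:=\mathbf{q}_{\epsilon}-\widetilde{\mathbf{f}}$, and defining the finite-dimensional analogue of $\mathscr{I}_{\epsilon}$,
\[
\mathscr{J}(\mathbf{q})\;:=\;\frac{1}{2\nu_{\star}}D_{\mathbf{x}}(\mathbf{q},\mathbf{q})\;+\;\sum_{i\in S_{\star}}\mu_{\star}(i)(L_{\mathbf{y}}\mathbf{f})(i)\,\mathbf{q}(i),\qquad\mathbf{q}\in\mathbb{R}^{S},
\]
a direct expansion using both parts of Lemma \ref{lem54} produces $\mathscr{J}(\widetilde{\mathbf{f}})=-\frac{1}{2}D_{\mathbf{y}}(\mathbf{f},\mathbf{f})$ together with the quadratic identity
\[
\mathscr{J}(\mathbf{q}_{\epsilon})-\mathscr{J}(\widetilde{\mathbf{f}})\;=\;\frac{1}{2\nu_{\star}}D_{\mathbf{x}}(\mathbf{r}_{\epsilon},\mathbf{r}_{\epsilon}).
\]
My target would be $D_{\mathbf{x}}(\mathbf{r}_{\epsilon},\mathbf{r}_{\epsilon})=o_{\epsilon}(1)$. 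Once this is available, the discrete Poincar\'e inequality on the finite connected weighted graph $(S,\omega)$ (connected by our assumption on $\overline{\Omega}$) produces a constant $c_{\epsilon}$ with $\max_{i\in S}|\mathbf{r}_{\epsilon}(i)-c_{\epsilon}|\le C\,[D_{\mathbf{x}}(\mathbf{r}_{\epsilon},\mathbf{r}_{\epsilon})]^{1/2}=o_{\epsilon}(1)$; and since Proposition \ref{p53} keeps $\lambda_{\epsilon}$ bounded below by a positive constant for small $\epsilon$, the relation $o_{\epsilon}(1)=o_{\epsilon}(1)\lambda_{\epsilon}$ yields the bound claimed by the proposition.

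The central step is the Dirichlet energy estimate
\[
\theta_{\epsilon}\,\mathscr{D}_{\epsilon}\!\bigl(F_{\epsilon}^{\mathbf{q}_{\epsilon}}-\psi_{\epsilon}\bigr)\;=\;o_{\epsilon}(1)\lambda_{\epsilon},
\]
where $F_{\epsilon}^{\mathbf{q}_{\epsilon}}$ is the test function of Section \ref{s32} built from the vector $\mathbf{q}_{\epsilon}=(\mathbf{q}_{\epsilon}(i))_{i\in S}$. To prove it I would integrate by parts,
\[
\mathscr{D}_{\epsilon}(F_{\epsilon}^{\mathbf{q}_{\epsilon}}-\psi_{\epsilon})\;=\;-\!\int\!(F_{\epsilon}^{\mathbf{q}_{\epsilon}}-\psi_{\epsilon})\,\mathscr{L}_{\epsilon}F_{\epsilon}^{\mathbf{q}_{\epsilon}}\,d\mu_{\epsilon}\;+\;\int\!(F_{\epsilon}^{\mathbf{q}_{\epsilon}}-\psi_{\epsilon})\,\mathscr{L}_{\epsilon}\psi_{\epsilon}\,d\mu_{\epsilon},
\]
and handle the two pieces separately. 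For the second, substituting the Poisson equation \eqref{psie} turns it into $\theta_{\epsilon}^{-1}\sum_{i\in S_{\star}}\mathbf{a}_{\epsilon}(i)(L_{\mathbf{y}}\mathbf{f})(i)\int_{\mathcal{V}_{i}}(F_{\epsilon}^{\mathbf{q}_{\epsilon}}-\psi_{\epsilon})\,d\mu_{\epsilon}$, and since $F_{\epsilon}^{\mathbf{q}_{\epsilon}}\equiv\mathbf{q}_{\epsilon}(i)$ on $\mathcal{V}_{i}\subset\mathcal{V}_{i}^{(1)}$, Proposition \ref{p58} bounds each integrand by $o_{\epsilon}(1)\lambda_{\epsilon}$ in $L^{\infty}$. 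For the first, $\mathscr{L}_{\epsilon}F_{\epsilon}^{\mathbf{q}_{\epsilon}}$ vanishes on each $\mathcal{W}_{i}^{\epsilon}$ (where $F_{\epsilon}^{\mathbf{q}_{\epsilon}}$ is constant) and on each $\mathcal{B}_{\boldsymbol{\sigma}}^{\epsilon}$ equals $(\mathbf{q}_{\epsilon}(j)-\mathbf{q}_{\epsilon}(i))\mathscr{L}_{\epsilon}f_{\epsilon}^{\boldsymbol{\sigma}}$; Lemma \ref{lem35} together with $\|\mathbf{q}_{\epsilon}\|_{\infty}=O(\lambda_{\epsilon})$ (from \eqref{se6} and Proposition \ref{p58}) yields $\theta_{\epsilon}\int|\mathscr{L}_{\epsilon}F_{\epsilon}^{\mathbf{q}_{\epsilon}}|\,d\mu_{\epsilon}=o_{\epsilon}(1)\lambda_{\epsilon}$, with the contribution from $(\mathcal{H}^{\epsilon})^{c}$ being exponentially negligible by \eqref{eka1} and the tightness condition \eqref{tc}. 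Combined with the a priori bound $\|F_{\epsilon}^{\mathbf{q}_{\epsilon}}-\psi_{\epsilon}\|_{\infty}\le C\lambda_{\epsilon}$, the first integral contributes $O(\lambda_{\epsilon}\cdot o_{\epsilon}(1)\lambda_{\epsilon})=o_{\epsilon}(1)\lambda_{\epsilon}$, using that $\lambda_{\epsilon}$ stays bounded.

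Granted the key estimate, the identity $\mathscr{I}_{\epsilon}(F_{\epsilon}^{\mathbf{q}_{\epsilon}})-\mathscr{I}_{\epsilon}(\psi_{\epsilon})=\frac{1}{2}\theta_{\epsilon}\mathscr{D}_{\epsilon}(F_{\epsilon}^{\mathbf{q}_{\epsilon}}-\psi_{\epsilon})$ (a consequence of $\psi_{\epsilon}$ being a critical point of the quadratic functional $\mathscr{I}_{\epsilon}$) together with \eqref{hg3} gives $\mathscr{I}_{\epsilon}(F_{\epsilon}^{\mathbf{q}_{\epsilon}})=-\lambda_{\epsilon}+o_{\epsilon}(1)\lambda_{\epsilon}$. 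A separate expansion of $\mathscr{I}_{\epsilon}(F_{\epsilon}^{\mathbf{q}_{\epsilon}})$ via Lemma \ref{lem34} (for the Dirichlet part) and Proposition \ref{p21} through $\mathbf{a}_{\epsilon}(i)\mu_{\epsilon}(\mathcal{V}_{i})=(1+o_{\epsilon}(1))\mu_{\star}(i)$ (for the linear part) produces $\mathscr{I}_{\epsilon}(F_{\epsilon}^{\mathbf{q}_{\epsilon}})=\mathscr{J}(\mathbf{q}_{\epsilon})+o_{\epsilon}(1)\lambda_{\epsilon}$. Comparing the two expressions yields $\mathscr{J}(\mathbf{q}_{\epsilon})=-\lambda_{\epsilon}+o_{\epsilon}(1)\lambda_{\epsilon}$, so the strict-convexity identity of the first paragraph gives
\[
\frac{1}{2\nu_{\star}}D_{\mathbf{x}}(\mathbf{r}_{\epsilon},\mathbf{r}_{\epsilon})\;=\;\mathscr{J}(\mathbf{q}_{\epsilon})+\tfrac{1}{2}D_{\mathbf{y}}(\mathbf{f},\mathbf{f})\;=\;\tfrac{1}{2}D_{\mathbf{y}}(\mathbf{f},\mathbf{f})-\lambda_{\epsilon}+o_{\epsilon}(1)\lambda_{\epsilon};
\]
combined with the lower bound $\lambda_{\epsilon}\ge\tfrac{1}{2}D_{\mathbf{y}}(\mathbf{f},\mathbf{f})+o_{\epsilon}(1)$ from Proposition \ref{p53} and the non-negativity of $D_{\mathbf{x}}(\mathbf{r}_{\epsilon},\mathbf{r}_{\epsilon})$, this forces $D_{\mathbf{x}}(\mathbf{r}_{\epsilon},\mathbf{r}_{\epsilon})=o_{\epsilon}(1)$ and the proof concludes via the Poincar\'e step outlined above.

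The hard part is the Dirichlet-energy estimate in the second paragraph. Its delicacy comes from combining two very different kinds of control on $F_{\epsilon}^{\mathbf{q}_{\epsilon}}-\psi_{\epsilon}$: the $L^{\infty}$ closeness on each valley (from Proposition \ref{p58}), which handles the portion of the Poisson contribution supported in $\mathcal{V}_{\star}$; and the integrability of $\mathscr{L}_{\epsilon}f_{\epsilon}^{\boldsymbol{\sigma}}$ near the saddles (from Lemma \ref{lem35}), which compensates for the merely $O(\lambda_{\epsilon})$ a priori bound on $F_{\epsilon}^{\mathbf{q}_{\epsilon}}-\psi_{\epsilon}$ in the saddle regions where the two functions need not be close. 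The tacit boundedness of $\lambda_{\epsilon}$, assumed throughout Section \ref{s54} and established later in \eqref{bdl}, is used repeatedly to absorb $o_{\epsilon}(1)\lambda_{\epsilon}^{2}$ errors into the target $o_{\epsilon}(1)\lambda_{\epsilon}$ scale.
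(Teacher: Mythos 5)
Your overall strategy---reduce to a finite-dimensional quadratic identity $\mathscr{J}(\mathbf{q}_{\epsilon})-\mathscr{J}(\widetilde{\mathbf{f}})=\tfrac{1}{2\nu_{\star}}D_{\mathbf{x}}(\mathbf{r}_{\epsilon},\mathbf{r}_{\epsilon})$ and close with a discrete Poincar\'e bound---is appealing and genuinely different from the paper's, but as written it is circular. Both the central estimate $\theta_{\epsilon}\,\mathscr{D}_{\epsilon}(F_{\epsilon}^{\mathbf{q}_{\epsilon}}-\psi_{\epsilon})=o_{\epsilon}(1)\lambda_{\epsilon}$ and the expansion $\mathscr{I}_{\epsilon}(F_{\epsilon}^{\mathbf{q}_{\epsilon}})=\mathscr{J}(\mathbf{q}_{\epsilon})+o_{\epsilon}(1)\lambda_{\epsilon}$ produce raw errors of size $o_{\epsilon}(1)\,\|\mathbf{q}_{\epsilon}\|_{\infty}\,\lambda_{\epsilon}$ or $o_{\epsilon}(1)\,D_{\mathbf{x}}(\mathbf{q}_{\epsilon},\mathbf{q}_{\epsilon})$, i.e.\ of order $o_{\epsilon}(1)\lambda_{\epsilon}^{2}$, and you convert these to $o_{\epsilon}(1)\lambda_{\epsilon}$ by appealing to an upper bound on $\lambda_{\epsilon}$. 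But the only place the paper establishes such a bound is \eqref{bdl}, which is \emph{deduced from Proposition~\ref{p59}}, so it is not available here. Without it, the step $\mathscr{J}(\mathbf{q}_{\epsilon})=-\lambda_{\epsilon}+o_{\epsilon}(1)\lambda_{\epsilon}^{2}$ together with $D_{\mathbf{x}}(\mathbf{r}_{\epsilon},\mathbf{r}_{\epsilon})\ge 0$ only gives $\lambda_{\epsilon}\bigl(1-o_{\epsilon}(1)\lambda_{\epsilon}\bigr)\le\tfrac{1}{2}D_{\mathbf{y}}(\mathbf{f},\mathbf{f})$, which is vacuous if $\lambda_{\epsilon}$ diverged, and the circle does not close. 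Your remark that ``the boundedness of $\lambda_{\epsilon}$ is assumed throughout Section~\ref{s54}'' is a misreading: Propositions~\ref{p55}, \ref{p58}, and the proof of Proposition~\ref{p59} use only the \emph{lower} bound from Proposition~\ref{p53}.

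This is precisely the obstruction the paper's proof is engineered to avoid: it tests the Poisson equation against $F_{\epsilon}^{\widetilde{\mathbf{h}}_{\epsilon}}$ for an \emph{unspecified} $\mathbf{h}_{\epsilon}$, keeps every error \emph{linear} in $\|\mathbf{h}_{\epsilon}\|_{\infty}$ (Lemma~\ref{lem510} gives $o_{\epsilon}(1)\lambda_{\epsilon}\|\mathbf{h}_{\epsilon}\|_{\infty}$), and only then makes the self-referential choice $\mathbf{h}_{\epsilon}=\mathbf{q}_{\epsilon}^{\star}-\mathbf{f}-c_{\epsilon}$ in \eqref{ecc6}. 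Since the coercivity bound \eqref{ecc8} is quadratic in $\|\mathbf{h}_{\epsilon}\|_{\infty}$ while the error is bilinear, the relation $\|\mathbf{h}_{\epsilon}\|_{\infty}^{2}\lesssim o_{\epsilon}(1)\lambda_{\epsilon}\|\mathbf{h}_{\epsilon}\|_{\infty}$ closes by cancelling one factor, with no a priori upper control on $\lambda_{\epsilon}$. Your proposal could be repaired by testing against $F_{\epsilon}^{\mathbf{h}}$ with $\mathbf{h}$ free and specializing at the end, but that essentially reproduces the paper's argument. A secondary caveat: your reading of $\mathscr{L}_{\epsilon}F_{\epsilon}^{\mathbf{q}_{\epsilon}}$ as supported only in the $\mathcal{B}_{\boldsymbol{\sigma}}^{\epsilon}$'s ignores the singular part coming from the gradient jumps across $\partial_{\pm}\mathcal{B}_{\boldsymbol{\sigma}}^{\epsilon}$ (since $F_{\epsilon}^{\mathbf{q}_{\epsilon}}$ is merely Lipschitz), so $-\int(F_{\epsilon}-\psi_{\epsilon})\mathscr{L}_{\epsilon}F_{\epsilon}\,d\mu_{\epsilon}$ must carry surface terms, exactly the $A_{3},A_{4}$ of Lemma~\ref{l511}; they happen to be $o_{\epsilon}(1)\lambda_{\epsilon}^{2}$ as well, so they do not save the argument, but the integration-by-parts identity as you wrote it is not literally correct.
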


Indeed, this characterization of $\mathbf{q}_{\epsilon}$ is the main
innovation of the current work. We shall use the test function constructed
in Section \ref{s32} in a novel manner to establish Proposition~4.9.
For each $\epsilon>0$, we consider a function $\mathbf{h}_{\epsilon}:S_{\star}\rightarrow\mathbb{R}$
and write $\widetilde{\mathbf{h}}_{\epsilon}:S\rightarrow\mathbb{R}$
for its harmonic extension as was introduced in Section \ref{sec52}.
Our selection for $\mathbf{h}_{\epsilon}$ will be revealed at the
last stage of the proof (cf. \eqref{ecc6}). To simplify the notation,
we write
\begin{equation}
F_{\epsilon}:=F_{\epsilon}^{\widetilde{\mathbf{h}}_{\epsilon}}\;,\label{nos}
\end{equation}
where the notation $F_{\epsilon}^{\widetilde{\mathbf{h}}_{\epsilon}}$
was introduced in Section \ref{s32}. We denote by $\Vert\mathbf{h}_{\epsilon}\Vert_{\infty}$
and

$\Vert\widetilde{\mathbf{h}}_{\epsilon}\Vert_{\infty}$ the maximum
of $|\mathbf{h}_{\epsilon}|$ and $|\widetilde{\mathbf{h}}_{\epsilon}|$
on $S_{\star}$ and $S$, respectively. Using a discrete Maximum Principle,
one can readily verify that $\Vert\mathbf{h}_{\epsilon}\Vert_{\infty}=\Vert\widetilde{\mathbf{h}}_{\epsilon}\Vert_{\infty}$.

Since $\psi_{\epsilon}$ satisfies the equation \eqref{p321} and
since $F_{\epsilon}\equiv\widetilde{\mathbf{h}}_{\epsilon}(i)=\mathbf{h}_{\epsilon}(i)$
on $\mathcal{V}'_{i}$, $i\in S_{\star}$, we have the identity
\begin{equation}
\theta_{\epsilon}\int_{\mathbb{R}^{d}}F_{\epsilon}(\boldsymbol{x})\,(\mathscr{L}_{\epsilon}\psi_{\epsilon})(\boldsymbol{x})\,\mu_{\epsilon}(d\boldsymbol{x})=\sum_{i\in S_{\star}}\mathbf{h}_{\epsilon}(i)\,(L_{\mathbf{y}}\mathbf{f})(i)\,\mathbf{a}_{\epsilon}(i)\,\bar{\zeta}^{i}\;.\label{emm}
\end{equation}
In order to prove Proposition \ref{p59}, we compute two sides of
\eqref{emm} separately. From the comparison of these computations,
we obtain the characterization described in Proposition \ref{p59}.

The right-hand side of \eqref{emm} is relatively easy to compute.
By Proposition \ref{p21} and \eqref{aei}, we have
\[
\mathbf{a}_{\epsilon}(i)\,\bar{\zeta}^{i}=(1+o_{\epsilon}(1))(\nu_{i}/\nu_{\star})
\]
and thus we can rewrite the right-hand side of \eqref{emm} as
\begin{equation}
\sum_{i\in S_{\star}}\mathbf{h}_{\epsilon}(i)\,(L_{\mathbf{y}}\mathbf{f})(i)\,\mathbf{a}_{\epsilon}(i)\,\bar{\zeta}^{i}=-D_{\mathbf{y}}(\mathbf{h}_{\epsilon},\,\mathbf{f})+o_{\epsilon}(1)\Vert\mathbf{h}_{\epsilon}\Vert_{\infty}\;.\label{e59}
\end{equation}
The main difficulty of the proof lies on the computation of the left-hand
side of \eqref{emm}. We carry out this computation in several lemmas
below.
\begin{lem}
\label{l510}With the notations above, it holds that
\begin{align}
 & \theta_{\epsilon}\int_{\mathbb{R}^{d}}F_{\epsilon}(\boldsymbol{x})\,(\mathscr{L}_{\epsilon}\psi_{\epsilon})(\boldsymbol{x})\,\mu_{\epsilon}(d\boldsymbol{x})\nonumber \\
 & \qquad\qquad=-\theta_{\epsilon}\,\epsilon\,\sum_{\boldsymbol{\sigma}\in\mathcal{S}}\int_{\mathcal{B}_{\boldsymbol{\sigma}}^{\epsilon}}(\nabla F_{\epsilon}\cdot\nabla\psi_{\epsilon})(\boldsymbol{x})\,\mu_{\epsilon}(d\boldsymbol{x})+o_{\epsilon}(1)\,\lambda_{\epsilon}^{1/2}\,\Vert\mathbf{h}_{\epsilon}\Vert_{\infty}\;.\label{e510}
\end{align}
\end{lem}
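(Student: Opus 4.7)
\textbf{Proof proposal for Lemma \ref{l510}.} The strategy is to apply integration by parts to transfer the operator $\mathscr{L}_{\epsilon}$ from $\psi_{\epsilon}$ onto $F_{\epsilon}$, and then to localize the resulting gradient integral to the saddle boxes $\mathcal{B}_{\boldsymbol{\sigma}}^{\epsilon}$ at negligible cost. Recall from Section \ref{s22} that $\mathscr{L}_{\epsilon} = \epsilon\,e^{U/\epsilon}\nabla\cdot[e^{-U/\epsilon}\nabla]$ and $\mu_{\epsilon}(d\boldsymbol{x}) = Z_{\epsilon}^{-1}e^{-U/\epsilon}d\boldsymbol{x}$. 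Since $F_{\epsilon}$ is continuous, Lipschitz (in view of \eqref{eka1}), and compactly supported in $\mathcal{K}$, and since $\psi_{\epsilon}\in W^{2,p}_{\mathrm{loc}}(\mathbb{R}^{d})$ for all $p\geq 1$, a standard divergence-theorem computation gives
\begin{equation*}
\theta_{\epsilon}\int_{\mathbb{R}^{d}}F_{\epsilon}(\mathscr{L}_{\epsilon}\psi_{\epsilon})\,d\mu_{\epsilon} \;=\; -\theta_{\epsilon}\,\epsilon\int_{\mathbb{R}^{d}}\nabla F_{\epsilon}\cdot\nabla\psi_{\epsilon}\,d\mu_{\epsilon}\;,
\end{equation*}
with no boundary contribution since $\operatorname{supp} F_{\epsilon}\subset\mathcal{K}$.

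Next I localize the gradient integral using the construction of $F_{\epsilon}$. By \eqref{fq} and the definition of $F_{\epsilon}$, the function $\widehat{F}_{\epsilon}^{\widetilde{\mathbf{h}}_{\epsilon}}$ is constant (equal to $\widetilde{\mathbf{h}}_{\epsilon}(i)$) on each connected component $\mathcal{W}_{i}^{\epsilon}$, so $\nabla F_{\epsilon}$ vanishes there. Outside $\mathcal{K}$ the function $F_{\epsilon}$ vanishes, and inside $\mathcal{H}^{\epsilon}$ the gradient can only be non-zero on the saddle boxes $\mathcal{B}_{\boldsymbol{\sigma}}^{\epsilon}$. Hence
\begin{equation*}
\int_{\mathbb{R}^{d}}\nabla F_{\epsilon}\cdot\nabla\psi_{\epsilon}\,d\mu_{\epsilon} \;=\; \sum_{\boldsymbol{\sigma}\in\mathcal{S}}\int_{\mathcal{B}_{\boldsymbol{\sigma}}^{\epsilon}}\nabla F_{\epsilon}\cdot\nabla\psi_{\epsilon}\,d\mu_{\epsilon} \;+\; \int_{\mathcal{K}\setminus\mathcal{H}^{\epsilon}}\nabla F_{\epsilon}\cdot\nabla\psi_{\epsilon}\,d\mu_{\epsilon}\;.
\end{equation*}
The first sum, multiplied by $-\theta_{\epsilon}\epsilon$, produces the main term on the right-hand side of the claim, so what remains is to show that the tail contribution on $\mathcal{K}\setminus\mathcal{H}^{\epsilon}$ is absorbed into $o_{\epsilon}(1)\lambda_{\epsilon}^{1/2}\Vert\mathbf{h}_{\epsilon}\Vert_{\infty}$.

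For the tail, apply Cauchy--Schwarz with respect to $\theta_{\epsilon}\epsilon\,d\mu_{\epsilon}$:
\begin{equation*}
\theta_{\epsilon}\epsilon\left|\int_{\mathcal{K}\setminus\mathcal{H}^{\epsilon}}\nabla F_{\epsilon}\cdot\nabla\psi_{\epsilon}\,d\mu_{\epsilon}\right| \;\leq\; \Bigl(\theta_{\epsilon}\epsilon\int_{\mathcal{K}\setminus\mathcal{H}^{\epsilon}}|\nabla F_{\epsilon}|^{2}d\mu_{\epsilon}\Bigr)^{1/2}\Bigl(\theta_{\epsilon}\epsilon\int_{\mathbb{R}^{d}}|\nabla\psi_{\epsilon}|^{2}d\mu_{\epsilon}\Bigr)^{1/2}\;.
\end{equation*}
The second factor equals $(\theta_{\epsilon}\mathscr{D}_{\epsilon}(\psi_{\epsilon}))^{1/2} = (2\lambda_{\epsilon})^{1/2}$ by \eqref{hg2}. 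For the first factor I argue exactly as in the proof of Lemma \ref{lem34}: the gradient bound \eqref{eka1} gives $\Vert\nabla F_{\epsilon}\Vert_{\infty}^{2}\leq C\epsilon^{-1}D_{\mathbf{x}}(\widetilde{\mathbf{h}}_{\epsilon},\widetilde{\mathbf{h}}_{\epsilon})$, and on $(\mathcal{H}^{\epsilon})^{c}$ the pointwise bound $U\geq H+J^{2}\delta^{2}$ together with \eqref{ep3} yields $\mu_{\epsilon}(\mathcal{K}\setminus\mathcal{H}^{\epsilon})\leq C\epsilon^{-d/2}\theta_{\epsilon}^{-1}\epsilon^{J^{2}}$ (using $e^{-\delta^{2}/\epsilon}=\epsilon$), whence
\begin{equation*}
\theta_{\epsilon}\epsilon\int_{\mathcal{K}\setminus\mathcal{H}^{\epsilon}}|\nabla F_{\epsilon}|^{2}d\mu_{\epsilon} \;\leq\; C\,\epsilon^{J^{2}-d/2}\,D_{\mathbf{x}}(\widetilde{\mathbf{h}}_{\epsilon},\widetilde{\mathbf{h}}_{\epsilon})\;.
\end{equation*}
Since $J>\sqrt{12d}$, the factor $\epsilon^{J^{2}-d/2}$ is $o_{\epsilon}(1)$, and since $D_{\mathbf{x}}(\widetilde{\mathbf{h}}_{\epsilon},\widetilde{\mathbf{h}}_{\epsilon})\leq C\Vert\widetilde{\mathbf{h}}_{\epsilon}\Vert_{\infty}^{2}=C\Vert\mathbf{h}_{\epsilon}\Vert_{\infty}^{2}$ by the discrete maximum principle, the tail contribution is $o_{\epsilon}(1)\lambda_{\epsilon}^{1/2}\Vert\mathbf{h}_{\epsilon}\Vert_{\infty}$, as required.

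The plan is entirely routine once the decomposition of $\operatorname{supp}\nabla F_{\epsilon}$ is carried out; the only delicate point is the bookkeeping of exponents in the tail estimate, where one must check that the Gaussian gain $e^{-J^{2}\delta^{2}/\epsilon}=\epsilon^{J^{2}}$ beats the loss $\theta_{\epsilon}\epsilon^{-d/2}$ from normalization and volume, which is guaranteed by the choice $J>\sqrt{12d}$ made in Section \ref{sec3}.
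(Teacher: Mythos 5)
Your proposal is correct and follows essentially the same route as the paper: integrate by parts, localize $\nabla F_{\epsilon}$ to the union of the saddle boxes $\mathcal{B}_{\boldsymbol{\sigma}}^{\epsilon}$ plus the tail $\mathcal{K}\setminus\mathcal{H}^{\epsilon}$, and dispose of the tail by Cauchy--Schwarz combined with \eqref{hg2} and the estimate on $\theta_{\epsilon}\epsilon\int_{(\mathcal{H}^{\epsilon})^{c}}|\nabla F_{\epsilon}|^{2}d\mu_{\epsilon}$. The only presentational difference is that the paper simply cites \eqref{eka3} from the proof of Lemma \ref{lem34} for the tail bound, whereas you rederive it and also spell out the final inequality $D_{\mathbf{x}}(\widetilde{\mathbf{h}}_{\epsilon},\widetilde{\mathbf{h}}_{\epsilon})\le C\Vert\mathbf{h}_{\epsilon}\Vert_{\infty}^{2}$, both of which the paper leaves implicit.
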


\begin{proof}
By the divergence theorem, the left-hand side of \eqref{e510} is
equal to
\begin{equation}
-\theta_{\epsilon}\,\epsilon\,\int_{\mathbb{R}^{d}}(\nabla F_{\epsilon}\cdot\nabla\psi_{\epsilon})(\boldsymbol{x})\,\mu_{\epsilon}(d\boldsymbol{x})\;.\label{e5101}
\end{equation}
By the definition of $F_{\epsilon}=F_{\epsilon}^{\widetilde{\mathbf{h}}_{\epsilon}}$,
we have that
\begin{equation}
\nabla F_{\epsilon}\equiv0\;\;\text{in }\mathcal{W}_{i}^{\epsilon}\text{ for all }i\in S\;.\label{e511}
\end{equation}
Since
\[
\mathcal{H}^{\epsilon}\setminus\left(\bigcup_{i\in S}\mathcal{W}_{i}^{\epsilon}\right)=\bigcup_{\boldsymbol{\sigma}\in\mathcal{S}}\mathcal{B}_{\boldsymbol{\sigma}}^{\epsilon}\;,
\]
it suffices to show that
\begin{equation}
-\theta_{\epsilon}\,\epsilon\,\int_{(\mathcal{H}^{\epsilon})^{c}}(\nabla F_{\epsilon}\cdot\nabla\psi_{\epsilon})(\boldsymbol{x})\,\mu_{\epsilon}(d\boldsymbol{x})=o_{\epsilon}(1)\,\lambda_{\epsilon}^{1/2}\,\Vert\mathbf{h}_{\epsilon}\Vert_{\infty}\;.\label{e512}
\end{equation}
By the Cauchy-Schwarz inequality, the square of the left-hand side
of \eqref{e512} is bounded above by
\[
\theta_{\epsilon}\epsilon\left(\int_{(\mathcal{H}^{\epsilon})^{c}}|\nabla F_{\epsilon}(\boldsymbol{x})|^{2}\mu_{\epsilon}(d\boldsymbol{x})\right)^{\frac{1}{2}}\,\left(\int_{(\mathcal{H}^{\epsilon})^{c}}|\nabla\psi_{\epsilon}(\boldsymbol{x})|^{2}\mu_{\epsilon}(d\boldsymbol{x})\right)^{\frac{1}{2}}\;.
\]
By \eqref{eka3} and \eqref{hg2}, the last expression is $o_{\epsilon}(1)\,\lambda_{\epsilon}^{1/2}\,\Vert\mathbf{h}_{\epsilon}\Vert_{\infty}$.
Thus, \eqref{e512} follows.
\end{proof}
Recall the function $f_{\epsilon}^{\boldsymbol{\sigma}}$ from \eqref{cee}.
The estimate below corresponds to that of each summand on the right-hand
side of \eqref{e510}.
\begin{lem}
\label{l511}For $i,\,j\in S$ with $i<j$ and for $\boldsymbol{\sigma}\in\mathcal{W}_{i,\,j}$,
it holds that
\begin{equation}
\theta_{\epsilon}\,\epsilon\,\int_{\mathcal{B}_{\boldsymbol{\sigma}}^{\epsilon}}(\nabla f_{\epsilon}^{\boldsymbol{\sigma}}\cdot\nabla\psi_{\epsilon})(\boldsymbol{x})\,\mu_{\epsilon}(d\boldsymbol{x})=\frac{\omega_{\boldsymbol{\sigma}}}{\nu_{\star}}\left[\mathbf{q}_{\epsilon}(j)-\mathbf{q}_{\epsilon}(i)\right]+o_{\epsilon}(1)\,\lambda_{\epsilon}\;.\label{e52}
\end{equation}
\end{lem}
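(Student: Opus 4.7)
My approach is to decompose $\psi_\epsilon$ on $\mathcal{B}_{\boldsymbol{\sigma}}^\epsilon$ into a piece whose gradient is parallel to $\nabla f_\epsilon^{\boldsymbol{\sigma}}$ (yielding the main term via Lemma \ref{lem33}) plus a residual whose contribution is negligible. Writing
\[
\psi_\epsilon = \mathbf{q}_\epsilon(i) + [\mathbf{q}_\epsilon(j) - \mathbf{q}_\epsilon(i)]\,f_\epsilon^{\boldsymbol{\sigma}} + \widetilde{\psi}_\epsilon
\]
on $\mathcal{B}_{\boldsymbol{\sigma}}^\epsilon$, we obtain
\[
\nabla f_\epsilon^{\boldsymbol{\sigma}} \cdot \nabla \psi_\epsilon = [\mathbf{q}_\epsilon(j) - \mathbf{q}_\epsilon(i)]\,|\nabla f_\epsilon^{\boldsymbol{\sigma}}|^2 + \nabla f_\epsilon^{\boldsymbol{\sigma}} \cdot \nabla \widetilde{\psi}_\epsilon.
\]
Integrating the first summand against $\theta_\epsilon \epsilon\,\hat{\mu}_\epsilon$, Lemma \ref{lem33} gives $(1 + o_\epsilon(1))\,\nu_\star^{-1}\omega_{\boldsymbol{\sigma}}[\mathbf{q}_\epsilon(j) - \mathbf{q}_\epsilon(i)]$. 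Combined with the bound $|\mathbf{q}_\epsilon(k)| \leq \Vert\psi_\epsilon\Vert_\infty \leq (1+o_\epsilon(1))\lambda_\epsilon$ from \eqref{se6}, the $(1+o_\epsilon(1))$ prefactor absorbs into an $o_\epsilon(1)\lambda_\epsilon$ error, yielding the main term of \eqref{e52}.

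The remaining task is to show that the residual
\[
R_\epsilon := \theta_\epsilon \epsilon \int_{\mathcal{B}_{\boldsymbol{\sigma}}^\epsilon} \nabla f_\epsilon^{\boldsymbol{\sigma}} \cdot \nabla \widetilde{\psi}_\epsilon\,\hat{\mu}_\epsilon\,d\boldsymbol{x}
\]
is $o_\epsilon(1)\lambda_\epsilon$. I apply integration by parts based on the identity $\nabla \cdot (\hat{\mu}_\epsilon \nabla f_\epsilon^{\boldsymbol{\sigma}}) = \epsilon^{-1}\hat{\mu}_\epsilon\,\mathscr{L}_\epsilon f_\epsilon^{\boldsymbol{\sigma}}$ to obtain
\[
R_\epsilon = \theta_\epsilon \epsilon \int_{\partial \mathcal{B}_{\boldsymbol{\sigma}}^\epsilon} \widetilde{\psi}_\epsilon\,(\partial_n f_\epsilon^{\boldsymbol{\sigma}})\,\hat{\mu}_\epsilon\,dS - \theta_\epsilon \int_{\mathcal{B}_{\boldsymbol{\sigma}}^\epsilon} \widetilde{\psi}_\epsilon\,\hat{\mu}_\epsilon\,\mathscr{L}_\epsilon f_\epsilon^{\boldsymbol{\sigma}}\,d\boldsymbol{x}.
\]
The volume integral is $o_\epsilon(1)\lambda_\epsilon$ using $\Vert\widetilde{\psi}_\epsilon\Vert_\infty = O(\lambda_\epsilon)$ and Lemma \ref{lem35}. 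For the boundary integral I decompose $\partial\mathcal{B}_{\boldsymbol{\sigma}}^\epsilon$ into $\partial_+, \partial_-, \partial_0$. The part of $\partial_0$ lying in $\partial_0 \mathcal{C}_{\boldsymbol{\sigma}}^\epsilon$ contributes nothing, since $\nabla f_\epsilon^{\boldsymbol{\sigma}}$ is parallel to $\boldsymbol{v}_1^{\boldsymbol{\sigma}}$ while the outward normal there is some $\pm\boldsymbol{v}_i^{\boldsymbol{\sigma}}$ with $i\ge 2$; on the remaining part of $\partial_0$, where $U = H + J^2\delta^2$, the estimate $\hat{\mu}_\epsilon \le C Z_\epsilon^{-1}e^{-H/\epsilon}\epsilon^{J^2/2}$ combined with surface area $O(\delta^{d-1})$ gives a total of order $\lambda_\epsilon\epsilon^{J^2/2}\log^{(d-1)/2}(1/\epsilon)$, negligible since $J > \sqrt{12d}$.

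The crux is the analysis on $\partial_\pm$, where $\widetilde{\psi}_\epsilon = \psi_\epsilon - \mathbf{q}_\epsilon(k)$ with $k=j$ on $\partial_+$ and $k=i$ on $\partial_-$. I split $\partial_\pm = A_\pm \cup B_\pm$ with $A_\pm = \partial_\pm \cap \{U \le H - J^2\delta^2/4\}$; by the connectedness argument based on \eqref{bb1}, $A_\pm \subset \mathcal{V}_k^{(1)}$. On $A_\pm$, Proposition \ref{p58} yields $|\widetilde{\psi}_\epsilon| = o_\epsilon(1)\lambda_\epsilon$ pointwise, while an explicit Laplace-type calculation (reducing $\hat{\mu}_\epsilon\,dS$ on the hyperplane $\alpha_1 = \pm J\delta/\sqrt{\lambda_1^{\boldsymbol{\sigma}}}$ to a Gaussian integral in $(\alpha_2,\dots,\alpha_d)$) shows $\theta_\epsilon\epsilon\int_{\partial_\pm}|\partial_n f_\epsilon^{\boldsymbol{\sigma}}|\,\hat{\mu}_\epsilon\,dS = (1+o_\epsilon(1))\nu_\star^{-1}\omega_{\boldsymbol{\sigma}} = O(1)$, producing an $A_\pm$-contribution of $o_\epsilon(1)\lambda_\epsilon$. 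On $B_\pm$ I only have the crude bound $|\widetilde{\psi}_\epsilon| = O(\lambda_\epsilon)$, but the constraint $U > H - J^2\delta^2/4$ forces, in the scaled variables $y_i = \sqrt{\lambda_i^{\boldsymbol{\sigma}}/\epsilon}\,\alpha_i$, the condition $|y|^2 \ge J^2\log(1/\epsilon)/2$, whose Gaussian tail carries a factor $O(\epsilon^{J^2/4}\log^{(d-2)/2}(1/\epsilon))$. The main technical obstacle is this $B_\pm$ tail estimate: one must carefully pair the exponentially small value $\partial_{\alpha_1} f_\epsilon^{\boldsymbol{\sigma}} \sim \epsilon^{(J^2-1)/2}$ at $\alpha_1 = \pm J\delta/\sqrt{\lambda_1^{\boldsymbol{\sigma}}}$ with the Gaussian concentration of $\hat{\mu}_\epsilon$, together with the crude estimate on $\widetilde{\psi}_\epsilon$, to conclude a contribution of order $\lambda_\epsilon\,\epsilon^{J^2/4}\log^{(d-2)/2}(1/\epsilon) = o_\epsilon(1)\lambda_\epsilon$.
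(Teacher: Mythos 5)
Your proof is correct and rests on the same essential inputs as the paper's argument — an integration by parts on $\mathcal{B}_{\boldsymbol{\sigma}}^{\epsilon}$, Lemma \ref{lem35} for the bulk term, the Gaussian concentration of $\hat\mu_{\epsilon}$ on $\partial_{\pm}\mathcal{B}_{\boldsymbol{\sigma}}^{\epsilon}$, and Proposition \ref{p58} to identify $\psi_{\epsilon}$ near those faces — but it organizes them in a genuinely different way. The paper applies the divergence theorem to $\nabla f_{\epsilon}^{\boldsymbol{\sigma}}\cdot\nabla\psi_{\epsilon}$ directly and extracts the main term $\frac{\omega_{\boldsymbol{\sigma}}}{\nu_{\star}}[\mathbf{q}_{\epsilon}(j)-\mathbf{q}_{\epsilon}(i)]$ from the boundary integrals $A_{3}$, $A_{4}$ over $\partial_{\pm}\mathcal{B}_{\boldsymbol{\sigma}}^{\epsilon}$ via a Laplace-type computation, with Proposition \ref{p58} used to replace $\psi_{\epsilon}$ by $\mathbf{q}_{\epsilon}(j)$ or $\mathbf{q}_{\epsilon}(i)$ where the surface Gaussian concentrates. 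You instead subtract off the canonical interpolant $\mathbf{q}_{\epsilon}(i)+[\mathbf{q}_{\epsilon}(j)-\mathbf{q}_{\epsilon}(i)]f_{\epsilon}^{\boldsymbol{\sigma}}$ from $\psi_{\epsilon}$ first, so the main term falls out cleanly from the already-established Dirichlet form estimate of Lemma \ref{lem33}, and the boundary analysis is then reduced to a pure smallness estimate for $\widetilde{\psi}_{\epsilon}$. The two approaches carry comparable technical weight: yours makes the provenance of the main term transparent and isolates the error terms, while the paper's evaluation of $A_{3}$ and $A_{4}$ bundles the Laplace asymptotics and the Gaussian-tail splitting into one pass. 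Two small cosmetic remarks. First, by \eqref{b0} the boundary piece $\partial_{0}\mathcal{B}_{\boldsymbol{\sigma}}^{\epsilon}$ lies entirely on the level set $\{U=H+J^{2}\delta^{2}\}$ once $\epsilon$ is small, so there is in fact no portion of it in $\partial_{0}\mathcal{C}_{\boldsymbol{\sigma}}^{\epsilon}$ — your separate handling of that case is correct but superfluous. Second, on that piece $\hat\mu_{\epsilon}=Z_{\epsilon}^{-1}e^{-H/\epsilon}\epsilon^{J^{2}}$, so your bound with $\epsilon^{J^{2}/2}$ is correct but slightly loose; this does not affect the conclusion since $J^{2}/2>6d$ still suffices.
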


\begin{proof}
Recall the decomposition of boundary of $\mathcal{B}_{\boldsymbol{\sigma}}^{\epsilon}$
from \eqref{eqb}. By applying the divergence theorem to the left-hand
side of \eqref{e52}, we can write
\begin{equation}
\theta_{\epsilon}\,\epsilon\,\int_{\mathcal{B}_{\boldsymbol{\sigma}}^{\epsilon}}(\nabla f_{\epsilon}^{\boldsymbol{\sigma}}\cdot\nabla\psi_{\epsilon})(\boldsymbol{x})\,\mu_{\epsilon}(d\boldsymbol{x})=A_{1}+A_{2}+A_{3}+A_{4}\;,\label{e52-1}
\end{equation}
where
\begin{align*}
A_{1} & =-\theta_{\epsilon}\,\int_{\mathcal{B}_{\boldsymbol{\sigma}}^{\epsilon}}(\mathscr{L}_{\epsilon}f_{\epsilon}^{\boldsymbol{\sigma}})(\boldsymbol{x})\,\psi_{\epsilon}(\boldsymbol{x})\,\mu_{\epsilon}(d\boldsymbol{x})\;,\\
A_{2} & =\theta_{\epsilon}\,\epsilon\,\int_{\partial_{0}\mathcal{B}_{\boldsymbol{\sigma}}^{\epsilon}}\big[(\nabla f_{\epsilon}^{\boldsymbol{\sigma}})(\boldsymbol{x})\cdot\boldsymbol{n}_{\mathcal{B}_{\boldsymbol{\sigma}}^{\epsilon}}\big]\,\psi_{\epsilon}(\boldsymbol{x})\,\hat{\mu}_{\epsilon}(\boldsymbol{x})\,\sigma(d\boldsymbol{x})\;,\\
A_{3} & =\theta_{\epsilon}\,\epsilon\,\int_{\partial_{+}\mathcal{B}_{\boldsymbol{\sigma}}^{\epsilon}}\big[(\nabla f_{\epsilon}^{\boldsymbol{\sigma}})(\boldsymbol{x})\cdot\boldsymbol{n}_{\mathcal{B}_{\boldsymbol{\sigma}}^{\epsilon}}\big]\,\psi_{\epsilon}(\boldsymbol{x})\,\hat{\mu}_{\epsilon}(\boldsymbol{x})\,\sigma(d\boldsymbol{x})\;,\\
A_{4} & =\theta_{\epsilon}\,\epsilon\,\int_{\partial_{-}\mathcal{B}_{\boldsymbol{\sigma}}^{\epsilon}}\big[(\nabla f_{\epsilon}^{\boldsymbol{\sigma}})(\boldsymbol{x})\cdot\boldsymbol{n}_{\mathcal{B}_{\boldsymbol{\sigma}}^{\epsilon}}\big]\,\psi_{\epsilon}(\boldsymbol{x})\,\hat{\mu}_{\epsilon}(\boldsymbol{x})\,\sigma(d\boldsymbol{x})\;,
\end{align*}
where the vector $\boldsymbol{n}_{\mathcal{B}_{\epsilon}^{\boldsymbol{\sigma}}}$
denotes the outward unit normal vector to the domain $\mathcal{B}_{\boldsymbol{\sigma}}^{\epsilon}$,
and $\sigma(d\boldsymbol{x})$ represents the surface integral. We
now compute these four expressions.

Without loss of generality, we may assume that $\boldsymbol{\sigma}=0$.
First, we claim that $A_{1}$ and $A_{2}$ are negligible in the sense
that
\begin{equation}
A_{1}=o_{\epsilon}(1)\,\lambda_{\epsilon}\;\;\;\text{and\;\;\;}A_{2}=o_{\epsilon}(1)\,\lambda_{\epsilon}\;.\label{a1}
\end{equation}
The estimate for $A_{1}$ is immediate from Lemma \ref{lem35} and
\eqref{se6}. For $A_{2}$, notice first that by the definition \eqref{cee}
of $f_{\epsilon}^{\boldsymbol{\sigma}}$, we can write
\begin{equation}
(\nabla f_{\epsilon}^{\boldsymbol{\sigma}})(\boldsymbol{x})=\frac{1}{c_{\epsilon}^{\,\boldsymbol{\sigma}}}\sqrt{\frac{\lambda_{1}^{\boldsymbol{\sigma}}}{2\pi\epsilon}}e^{-\frac{\lambda_{1}^{\boldsymbol{\sigma}}}{2\epsilon}(\boldsymbol{x}\cdot\boldsymbol{v}_{1}^{\boldsymbol{\sigma}})^{2}}\boldsymbol{v}_{1}^{\boldsymbol{\sigma}}\;.\label{gradf}
\end{equation}
By inserting this into $A_{2}$, and applying \eqref{b0}, \eqref{ce},
and \eqref{se6}, we are able to deduce
\begin{equation}
|A_{2}|\le C\,\theta_{\epsilon}\,\epsilon^{1/2}\,\lambda_{\epsilon}\,Z_{\epsilon}^{-1}\,e^{-(H+J^{2}\delta^{2})/\epsilon}\,\delta^{d-1}\;=o_{\epsilon}(1)\,\lambda_{\epsilon}\;.\label{a2}
\end{equation}
Here we have used trivial facts such as $|\boldsymbol{v}_{1}^{\boldsymbol{\sigma}}\cdot\boldsymbol{n}_{\mathcal{B}_{\boldsymbol{\sigma}}^{\epsilon}}|\le1$,
$e^{-\frac{\lambda_{1}^{\boldsymbol{\sigma}}}{2\epsilon}(\boldsymbol{x}\cdot\boldsymbol{v}_{1}^{\boldsymbol{\sigma}})^{2}}\le1$,
and that the $\sigma$-measure of $\partial_{0}\mathcal{B}_{\boldsymbol{\sigma}}^{\epsilon}$
is of order $\delta^{d-1}$.

Next, we shall prove that
\begin{equation}
A_{3}=\frac{\omega_{\boldsymbol{\sigma}}}{\nu_{\star}}\,\,\mathbf{q}_{\epsilon}(j)+o_{\epsilon}(1)\,\lambda_{\epsilon}\;\;\;\text{and\;\;\;}A_{4}=-\frac{\omega_{\boldsymbol{\sigma}}}{\nu_{\star}}\,\mathbf{q}_{\epsilon}(i)+o_{\epsilon}(1)\,\lambda_{\epsilon}\;.\label{a3}
\end{equation}
Since the proofs for these two estimates are identical, we only focus
on the former. Note that the surface $\partial_{+}\mathcal{B}_{\boldsymbol{\sigma}}^{\epsilon}$
is flat, and hence the outward normal vector $\boldsymbol{n}_{\mathcal{B}_{\boldsymbol{\sigma}}^{\epsilon}}$
is merely equal to $\boldsymbol{v}_{1}^{\boldsymbol{\sigma}}$. Hence,
by \eqref{ep3}, \eqref{ce} and \eqref{gradf} we can rewrite $A_{3}$
as
\[
A_{3}=(1+o_{\epsilon}(1))\theta_{\epsilon}\,\epsilon\,\sqrt{\frac{\lambda_{1}^{\boldsymbol{\sigma}}}{2\pi\epsilon}}\,\frac{1}{(2\pi\epsilon)^{d/2}e^{-h/\epsilon}\nu_{\star}}\,\int_{\partial_{+}\mathcal{B}_{\boldsymbol{\sigma}}^{\epsilon}}e^{-\frac{\lambda_{1}^{\boldsymbol{\sigma}}}{2\epsilon}(\boldsymbol{x}\cdot\boldsymbol{v}_{1}^{\boldsymbol{\sigma}})^{2}-\frac{U(\boldsymbol{x})}{\epsilon}}\psi_{\epsilon}(\boldsymbol{x})\,\sigma(d\boldsymbol{x})\;.
\]
By the Taylor expansion, we have
\[
U(\boldsymbol{x})=H+\frac{1}{2}\left(-\lambda_{1}^{\boldsymbol{\sigma}}(\boldsymbol{x}\cdot\boldsymbol{v}_{1}^{\boldsymbol{\sigma}})^{2}+\sum_{i=2}^{d}\lambda_{i}^{\boldsymbol{\sigma}}(\boldsymbol{x}\cdot\boldsymbol{v}_{i}^{\boldsymbol{\sigma}})^{2}\right)+o(\delta^{2})\;.
\]
Inserting this into the penultimate display, we can reorganize the
right-hand side so that
\begin{equation}
A_{3}=(1+o_{\epsilon}(1))\,\frac{\sqrt{\lambda_{1}^{\boldsymbol{\sigma}}}}{2\pi\nu_{\star}}\,\int_{\partial_{+}\mathcal{B}_{\boldsymbol{\sigma}}^{\epsilon}}\frac{1}{(2\pi\epsilon)^{(d-1)/2}}e^{-\frac{1}{2\epsilon}\sum_{i=2}^{d}\lambda_{i}^{\boldsymbol{\sigma}}(\boldsymbol{x}\cdot\boldsymbol{v}_{i}^{\boldsymbol{\sigma}})^{2}}\psi_{\epsilon}(\boldsymbol{x})\,\sigma(d\boldsymbol{x})\;.\label{a31}
\end{equation}

Now we introduce a change of variable to estimate the last integral.
Define a map $g_{\epsilon}^{\boldsymbol{\sigma}}:\mathbb{R}^{d-1}\rightarrow\mathbb{R}^{d}$
as, for $\boldsymbol{y}=(y_{2},\,\cdots,\,y_{d})\in\mathbb{R}^{d-1}$,
\begin{equation}
g_{\epsilon}^{\boldsymbol{\sigma}}(\boldsymbol{y})=\frac{J\delta}{\sqrt{\lambda_{1}}}\boldsymbol{v}_{1}^{\boldsymbol{\sigma}}+\sum_{k=2}^{d}\sqrt{\frac{\epsilon}{\lambda_{k}}}y_{k}\boldsymbol{v}_{k}^{\boldsymbol{\sigma}}\;,\label{ecc1}
\end{equation}
(recall $\boldsymbol{\sigma}=0$). Notice here that $\partial_{+}\mathcal{B}_{\boldsymbol{\sigma}}^{\epsilon}\subset g_{\epsilon}^{\boldsymbol{\sigma}}(\mathbb{R}^{d-1})$.
Write
\[
\mathcal{D}_{\boldsymbol{\sigma}}^{\epsilon}=(g_{\epsilon}^{\boldsymbol{\sigma}})^{-1}(\partial_{+}\mathcal{B}_{\boldsymbol{\sigma}}^{\epsilon})\subset\mathbb{R}^{d-1}\;.
\]
Then, by a change of variable $\boldsymbol{x}=g_{\epsilon}^{\boldsymbol{\sigma}}(\boldsymbol{y})$,
we can rewrite \eqref{a31} as
\begin{equation}
A_{3}=(1+o_{\epsilon}(1))\,\frac{1}{\nu_{\star}}\frac{\sqrt{\lambda_{1}^{\boldsymbol{\sigma}}}}{2\pi\sqrt{\prod_{k=2}^{d}\lambda_{k}^{\boldsymbol{\sigma}}}}\int_{\mathcal{D}_{\boldsymbol{\sigma}}^{\epsilon}}\frac{1}{(2\pi)^{(d-1)/2}}e^{-\frac{1}{2}|\boldsymbol{y}|^{2}}\psi_{\epsilon}(g^{\boldsymbol{\sigma}}(\boldsymbol{y}))\,d\boldsymbol{y}\;.\label{a33}
\end{equation}
Now we analyze $\mathcal{D}_{\boldsymbol{\sigma}}^{\epsilon}$. For
$\boldsymbol{y}\in\mathcal{D}_{\boldsymbol{\sigma}}^{\epsilon}$,
we note that $|g^{\boldsymbol{\sigma}}(\boldsymbol{y})-\boldsymbol{\sigma}|=O(\delta)$
and thus by the Taylor expansion,
\[
U(g^{\boldsymbol{\sigma}}(\boldsymbol{y}))=H-\frac{1}{2}J^{2}\delta^{2}+\frac{\epsilon}{2}\sum_{k=2}^{d}y_{k}^{2}+o(\delta^{2})\;.
\]
Denote by $\mathcal{Q}_{d-1}(r)$ the $(d-1)$-dimensional ball of
radius $r>0$,\textbf{ }centered at origin. Then, for $\boldsymbol{y}\in\mathcal{Q}_{d-1}(\frac{J}{2}\sqrt{\log\frac{1}{\epsilon}})$,
by the previous display we have that
\[
U(g^{\boldsymbol{\sigma}}(\boldsymbol{y}))\le H-\frac{1}{2}J^{2}\delta^{2}+\frac{1}{8}J^{2}\delta^{2}+o(\delta^{2})<H-\frac{1}{4}J^{2}\delta^{2}
\]
for all sufficiently small $\epsilon>0$. For such $\epsilon$, we
can conclude that $\boldsymbol{y}\in\partial_{+}\mathcal{B}_{\boldsymbol{\sigma}}^{\epsilon}\cap\mathcal{V}_{j}^{(3)}$
by definition of $\mathcal{V}_{j}^{(3)}$ and therefore by Proposition
\ref{p49}, we have that $\psi_{\epsilon}(g^{\boldsymbol{\sigma}}(\boldsymbol{y}))=\mathbf{q}_{\epsilon}(j)+o_{\epsilon}(1)\lambda_{\epsilon}$.
Consequently, we have
\[
\int_{\mathcal{Q}_{d-1}(\frac{J}{2}\sqrt{\log\frac{1}{\epsilon}})}\,\frac{1}{(2\pi)^{(d-1)/2}}e^{-\frac{1}{2}|\boldsymbol{y}|^{2}}\psi_{\epsilon}(g^{\boldsymbol{\sigma}}(\boldsymbol{y}))\,d\boldsymbol{y}=(1+o_{\epsilon}(1))\mathbf{q}_{\epsilon}(j)+o_{\epsilon}(1)\lambda_{\epsilon}\;,
\]
because the integral of the probability density function of the $(d-1)$-dimensional
standard normal distribution on $\mathcal{Q}_{d-1}(\frac{J}{2}\sqrt{\log\frac{1}{\epsilon}})$
is $1+o_{\epsilon}(1)$.

On the other hand, by \eqref{se6},
\begin{align*}
 & \left|\int_{\mathcal{D}_{\boldsymbol{\sigma}}^{\epsilon}\setminus\mathcal{Q}_{d-1}(\frac{J}{2}\sqrt{\log\frac{1}{\epsilon}})}\frac{1}{(2\pi)^{(d-1)/2}}e^{-\frac{1}{2}|\boldsymbol{y}|^{2}}\psi_{\epsilon}(g^{\boldsymbol{\sigma}}(\boldsymbol{y}))\,d\boldsymbol{y}\right|\\
 & \qquad\qquad\le\Vert\psi_{\epsilon}\Vert_{\infty}\int_{\mathcal{Q}_{d-1}(\frac{J}{2}\sqrt{\log\frac{1}{\epsilon}})^{c}}\frac{1}{(2\pi)^{(d-1)/2}}e^{-\frac{1}{2}|\boldsymbol{y}|^{2}}\,d\boldsymbol{y}=o_{\epsilon}(1)\lambda_{\epsilon}\;.
\end{align*}
By the two last centered displays and by the definition of $\omega_{\boldsymbol{\sigma}}$,
we can rewrite \eqref{a33} as
\[
A_{3}=\frac{\omega_{\boldsymbol{\sigma}}}{\nu_{\star}}\left[(1+o_{\epsilon}(1))\mathbf{q}_{\epsilon}(j)+o_{\epsilon}(1)\lambda_{\epsilon}\right]\;.
\]
The proof of \eqref{a3} is completed by recalling that the fact that
by \eqref{se6}
\[
|\mathbf{q}_{\epsilon}(i)|\le\left\Vert \psi_{\epsilon}\right\Vert _{\infty}\le(1+o_{\epsilon}(1))\lambda_{\epsilon}.
\]
By combining \eqref{e52-1}, \eqref{a1}, and \eqref{a3}, we complete
the proof.
\end{proof}
\begin{lem}
\label{lem510} Assume that $\mathbf{f}\neq0$. It then holds,
\begin{equation}
\theta_{\epsilon}\int_{\mathbb{R}^{d}}F_{\epsilon}(\boldsymbol{x})\,(\mathscr{L}_{\epsilon}\psi_{\epsilon})(\boldsymbol{x})\,\hat{\mu}_{\epsilon}(\boldsymbol{x})d\boldsymbol{x}=-\frac{1}{\nu_{\star}}D_{\mathbf{x}}(\widetilde{\mathbf{h}}_{\epsilon},\,\mathbf{q}_{\epsilon})+o_{\epsilon}(1)\,\lambda_{\epsilon}\,\Vert\mathbf{h}_{\epsilon}\Vert_{\infty}\;.\label{ee52}
\end{equation}
\end{lem}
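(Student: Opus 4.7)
\medskip

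\noindent\textbf{Proof plan for Lemma \ref{lem510}.}
My plan is to combine Lemma \ref{l510} with Lemma \ref{l511} by exploiting the fact that on each saddle-neighborhood $\mathcal{B}^\epsilon_{\boldsymbol{\sigma}}$ the test function $F_\epsilon = F_\epsilon^{\widetilde{\mathbf{h}}_\epsilon}$ is an explicit affine function of the one-dimensional profile $f_\epsilon^{\boldsymbol{\sigma}}$. More precisely, by the definition \eqref{fq}, for $\boldsymbol{\sigma}\in\mathcal{S}_{i,j}$ with $i<j$ we have on $\mathcal{B}_{\boldsymbol{\sigma}}^\epsilon$
\begin{equation*}
F_\epsilon(\boldsymbol{x}) = \widetilde{\mathbf{h}}_\epsilon(i) + \bigl(\widetilde{\mathbf{h}}_\epsilon(j)-\widetilde{\mathbf{h}}_\epsilon(i)\bigr)\,f_\epsilon^{\boldsymbol{\sigma}}(\boldsymbol{x})\;,
\end{equation*}
so that $\nabla F_\epsilon = (\widetilde{\mathbf{h}}_\epsilon(j)-\widetilde{\mathbf{h}}_\epsilon(i))\,\nabla f_\epsilon^{\boldsymbol{\sigma}}$ throughout $\mathcal{B}_{\boldsymbol{\sigma}}^\epsilon$.

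The first step is to start from Lemma \ref{l510}, which reduces the left-hand side of \eqref{ee52} to $-\theta_\epsilon\epsilon\sum_{\boldsymbol{\sigma}\in\mathcal{S}}\int_{\mathcal{B}_{\boldsymbol{\sigma}}^\epsilon}\nabla F_\epsilon\cdot\nabla\psi_\epsilon\, d\mu_\epsilon$, modulo an error $o_\epsilon(1)\,\lambda_\epsilon^{1/2}\,\Vert\mathbf{h}_\epsilon\Vert_\infty$. Using the identity above for $\nabla F_\epsilon$ and multiplying the estimate of Lemma \ref{l511} by the scalar $\widetilde{\mathbf{h}}_\epsilon(j)-\widetilde{\mathbf{h}}_\epsilon(i)$, and noting that $|\widetilde{\mathbf{h}}_\epsilon(j)-\widetilde{\mathbf{h}}_\epsilon(i)|\le 2\Vert\widetilde{\mathbf{h}}_\epsilon\Vert_\infty = 2\Vert\mathbf{h}_\epsilon\Vert_\infty$, I obtain for each $\boldsymbol{\sigma}\in\mathcal{S}_{i,j}$ with $i<j$
\begin{equation*}
\theta_\epsilon\,\epsilon\int_{\mathcal{B}_{\boldsymbol{\sigma}}^\epsilon}(\nabla F_\epsilon\cdot\nabla\psi_\epsilon)\,d\mu_\epsilon
= \frac{\omega_{\boldsymbol{\sigma}}}{\nu_\star}\bigl[\widetilde{\mathbf{h}}_\epsilon(j)-\widetilde{\mathbf{h}}_\epsilon(i)\bigr]\bigl[\mathbf{q}_\epsilon(j)-\mathbf{q}_\epsilon(i)\bigr] + o_\epsilon(1)\,\lambda_\epsilon\,\Vert\mathbf{h}_\epsilon\Vert_\infty\;.
\end{equation*}

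The second step is to sum over $\boldsymbol{\sigma}\in\mathcal{S}$. Writing the sum as $\sum_{i<j}\sum_{\boldsymbol{\sigma}\in\mathcal{S}_{i,j}}$ and using $\sum_{\boldsymbol{\sigma}\in\mathcal{S}_{i,j}}\omega_{\boldsymbol{\sigma}} = \omega_{i,j}$, the main contribution becomes $\nu_\star^{-1}\sum_{i<j}\omega_{i,j}[\widetilde{\mathbf{h}}_\epsilon(j)-\widetilde{\mathbf{h}}_\epsilon(i)][\mathbf{q}_\epsilon(j)-\mathbf{q}_\epsilon(i)]$, which by the symmetry $\omega_{i,j}=\omega_{j,i}$ and the definition \eqref{dxfg} of the Dirichlet form is exactly $\nu_\star^{-1}\,D_{\mathbf{x}}(\widetilde{\mathbf{h}}_\epsilon,\mathbf{q}_\epsilon)$. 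Reading off the overall sign from Lemma \ref{l510} produces the announced leading term $-\nu_\star^{-1}\,D_{\mathbf{x}}(\widetilde{\mathbf{h}}_\epsilon,\mathbf{q}_\epsilon)$.

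The last step is to dispose of the residual $o_\epsilon(1)\,\lambda_\epsilon^{1/2}\,\Vert\mathbf{h}_\epsilon\Vert_\infty$ term inherited from Lemma \ref{l510} and fold it into $o_\epsilon(1)\,\lambda_\epsilon\,\Vert\mathbf{h}_\epsilon\Vert_\infty$. This is precisely where the hypothesis $\mathbf{f}\neq 0$ enters: since $\mathbf{f}\in\mathbf{S}_{1,2}$ we have $L_{\mathbf{y}}\mathbf{f} = (\nu_\star/\nu_1)\mathbf{e}_1 - (\nu_\star/\nu_2)\mathbf{e}_2 \neq 0$, so $\mathbf{f}$ is non-constant and thus $D_{\mathbf{y}}(\mathbf{f},\mathbf{f})>0$. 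Proposition \ref{p53} then guarantees $\lambda_\epsilon\ge c>0$ for all sufficiently small $\epsilon$, so $\lambda_\epsilon^{1/2}\le c^{-1/2}\lambda_\epsilon$ and the half-power error is absorbed. The main obstacle in this proof is really just the careful bookkeeping of the three different error terms and verifying that the $(i,j)$-antisymmetric pairing matches the full symmetric Dirichlet form.
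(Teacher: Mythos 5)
Your argument is correct and follows the same route as the paper: invoke Lemma \ref{l510} to localize to the saddle boxes, expand $\nabla F_{\epsilon}$ in terms of $\nabla f_{\epsilon}^{\boldsymbol{\sigma}}$ on each $\mathcal{B}_{\boldsymbol{\sigma}}^{\epsilon}$, apply Lemma \ref{l511}, reassemble the sum over $i<j$ into $D_{\mathbf{x}}(\widetilde{\mathbf{h}}_{\epsilon},\mathbf{q}_{\epsilon})$ via $\sum_{\boldsymbol{\sigma}\in\mathcal{S}_{i,j}}\omega_{\boldsymbol{\sigma}}=\omega_{i,j}$, and absorb the $\lambda_{\epsilon}^{1/2}$ error using $\Vert\widetilde{\mathbf{h}}_{\epsilon}\Vert_{\infty}=\Vert\mathbf{h}_{\epsilon}\Vert_{\infty}$ (maximum principle) together with the uniform lower bound on $\lambda_{\epsilon}$ from Proposition \ref{p53} and $\mathbf{f}\neq0$. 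The paper's proof is a compressed version of exactly this bookkeeping.
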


\begin{proof}
By Lemma \ref{l510} and the definition \eqref{nos} (cf. \eqref{fq})
of $F_{\epsilon}$ we can rewrite the left-hand side as
\begin{equation}
-\theta_{\epsilon}\,\epsilon\,\sum_{1\le i<j<K}\left[(\widetilde{\mathbf{h}}_{\epsilon}(j)-\widetilde{\mathbf{h}}_{\epsilon}(i))\sum_{\boldsymbol{\sigma}\in\mathcal{W}_{i,\,j}}\int_{\mathcal{B}_{\boldsymbol{\sigma}}^{\epsilon}}(\nabla f_{\epsilon}^{\boldsymbol{\sigma}}\cdot\nabla\psi_{\epsilon})(\boldsymbol{x})\,\hat{\mu}_{\epsilon}(\boldsymbol{x})d\boldsymbol{x}\right]+o_{\epsilon}(1)\,\lambda_{\epsilon}^{1/2}\,\Vert\mathbf{h}_{\epsilon}\Vert_{\infty}\;.\label{e51}
\end{equation}
From this and Lemma \ref{l511}, we deduce that the left-hand side
of \eqref{ee52} equals to
\[
-\frac{1}{\nu_{\star}}D_{\mathbf{x}}(\widetilde{\mathbf{h}}_{\epsilon},\,\mathbf{q}_{\epsilon})+o_{\epsilon}(1)\,\lambda_{\epsilon}^{1/2}\,\Vert\mathbf{h}_{\epsilon}\Vert_{\infty}+o_{\epsilon}(1)\,\lambda_{\epsilon}\,\Vert\widetilde{\mathbf{h}}_{\epsilon}\Vert_{\infty}.
\]
Therefore, the proof is completed because by Maximum Principle $\Vert\widetilde{\mathbf{h}}_{\epsilon}\Vert_{\infty}=\Vert\mathbf{h}_{\epsilon}\Vert_{\infty}$,
and $\lambda_{\epsilon}$ is uniformly positive whenever $\mathbf{f}\neq0$
by Proposition \ref{p53}.
\end{proof}
Now we are ready to prove Proposition \ref{p59}.
\begin{proof}[Proof of Proposition \ref{p59}]
The proof of the Proposition is trivial when $\mathbf{f}=0$, because
we may choose $\psi_{\epsilon}=c_{\epsilon}=0.$ From now on, we assume
that $\mathbf{f}\neq0$. By \eqref{e59}, Proposition \ref{p53},
and Lemma \ref{lem510}, we have
\begin{equation}
D_{\mathbf{y}}(\mathbf{h}_{\epsilon},\,\mathbf{f})=\frac{1}{\nu_{\star}}D_{\mathbf{x}}(\widetilde{\mathbf{h}}_{\epsilon},\,\mathbf{q}_{\epsilon})+o_{\epsilon}(1)\,\lambda_{\epsilon}\,\Vert\mathbf{h}_{\epsilon}\Vert_{\infty}\;.\label{ecc4}
\end{equation}
Denote by $\mathbf{q}_{\epsilon}^{\star}\in\mathbb{R}^{S_{\star}}$
the restriction of $\mathbf{q}_{\epsilon}$ on $S_{\star}$, i.e.,
$\mathbf{q}_{\epsilon}^{\star}(i)=\mathbf{q}_{\epsilon}(i)$ for all
$i\in S_{\star}$, and denote by $\mathbf{\widetilde{q}}_{\epsilon}^{\star}\in\mathbb{R}^{S}$
the harmonic extension of $\mathbf{q}_{\epsilon}^{\star}$ to $S$.
Note that $\mathbf{\widetilde{q}}_{\epsilon}^{\star}$ and $\mathbf{q}_{\epsilon}$
are two different extensions of $\mathbf{q}_{\epsilon}^{\star}\in\mathbb{R}^{S_{\star}}$
to $S$. Thus, by Lemma \ref{lem54} we have
\begin{equation}
D_{\mathbf{x}}(\widetilde{\mathbf{h}}_{\epsilon},\,\mathbf{q}_{\epsilon})=D_{\mathbf{x}}(\widetilde{\mathbf{h}}_{\epsilon},\,\mathbf{\widetilde{q}}_{\epsilon}^{\star})=\nu_{\star}D_{\mathbf{y}}(\mathbf{h}_{\epsilon},\,\mathbf{q}_{\epsilon}^{\star})\;.\label{ecc3}
\end{equation}
Hence, by \eqref{ecc4} and \eqref{ecc3}, we obtain
\begin{equation}
D_{\mathbf{y}}(\mathbf{h}_{\epsilon},\,\mathbf{q}_{\epsilon}^{\star}-\mathbf{f})=o_{\epsilon}(1)\,\lambda_{\epsilon}\,\Vert\mathbf{h}_{\epsilon}\Vert_{\infty}\;.\label{ecc5}
\end{equation}

Finally, let us define the test function $\mathbf{h}_{\epsilon}\in\mathbb{R}^{S_{\star}}$
as
\begin{equation}
\mathbf{h}_{\epsilon}(i):=\mathbf{q}_{\epsilon}(i)-\mathbf{f}(i)-c_{\epsilon}\;\;\text{for all }i\in S_{\star}\;,\label{ecc6}
\end{equation}
where
\begin{equation}
c_{\epsilon}=\frac{1}{|S_{\star}|}\sum_{i\in S_{\star}}\left[\mathbf{q}_{\epsilon}(i)-\mathbf{f}(i)\right]\;.\label{ecc61}
\end{equation}
By inserting this test function $\mathbf{h}_{\epsilon}$ in \eqref{ecc5},
we obtain
\begin{equation}
D_{\mathbf{y}}(\mathbf{h}_{\epsilon},\,\mathbf{h}_{\epsilon})=o_{\epsilon}(1)\,\lambda_{\epsilon}\,\Vert\mathbf{h}_{\epsilon}\Vert_{\infty}\;.\label{ecc7}
\end{equation}
Write
\[
\beta_{\star}=\frac{1}{2\nu_{\star}}\min_{i\in S_{\star},\,j\in S_{\star},\,i\neq j}\beta_{i,\,j}>0\;.
\]
Then, we have
\begin{equation}
D_{\mathbf{y}}(\mathbf{h}_{\epsilon},\,\mathbf{h}_{\epsilon})\ge\beta_{\star}\sum_{i,\,j\in S_{\star}}(\mathbf{h}_{\epsilon}(i)-\mathbf{h}_{\epsilon}(j))^{2}=2\beta_{\star}\,|S_{\star}|\sum_{i\in S_{\star}}\mathbf{h}_{\epsilon}^{2}\ge2\beta_{\star}\,|S_{\star}|^{2}\Vert\mathbf{h}_{\epsilon}\Vert_{\infty}^{2}\;,\label{ecc8}
\end{equation}
where the identity follows from the fact that $\sum_{i\in S_{\star}}\mathbf{h}_{\epsilon}=0$
thanks to our selection \eqref{ecc6} and \eqref{ecc7} of $\mathbf{h}_{\epsilon}$.
By \eqref{ecc7} and \eqref{ecc8}, we obtain
\[
\Vert\mathbf{h}_{\epsilon}\Vert_{\infty}\le o_{\epsilon}(1)\,\lambda_{\epsilon}\;.
\]
This completes the proof since $\mathbf{h}_{\epsilon}(i)=\mathbf{q}_{\epsilon}(i)-\mathbf{f}(i)-c_{\epsilon}$
for $i\in S_{\star}$.
\end{proof}

\subsection{\label{s56}Proof of Theorem \ref{t51}}

Now we are ready to prove Theorem \ref{t51}.
\begin{proof}[Proof of Theorem \ref{t51}]
Define $\phi_{\epsilon}=\psi_{\epsilon}-c_{\epsilon}$ where $c_{\epsilon}$
is the constant appearing in the statement of Proposition \ref{p59}.
Then, by Propositions \ref{p58} and \ref{p59}, we obtain
\[
\left\Vert \phi_{\epsilon}-\mathbf{f}(i)\right\Vert _{L^{\infty}(\mathcal{V}_{i}^{(1)})}=o_{\epsilon}(1)\,\lambda_{\epsilon}\;\;\text{for all }i\in S_{\star}.
\]
Since it already has been shown that $\phi_{\epsilon}$ satisfies
\eqref{p321}, and $\phi_{\epsilon}\in W_{\textrm{loc}}^{2,\,p}(\mathbb{R}^{d})\text{ for all }p\ge1$,
it only remains to show that $\lambda_{\epsilon}$ is bounded above.
By Lemma \ref{lem57}, Proposition \ref{p58}, and \eqref{lame},
we have that
\[
\mathbf{q}_{\epsilon}(1)=-(1+o_{\epsilon}(1))\lambda_{\epsilon}\;\text{ and \;}\mathbf{q}_{\epsilon}(2)=(1+o_{\epsilon}(1))\lambda_{\epsilon}\;.
\]
By combining these results with Proposition \ref{p59}, we obtain
\[
\mathbf{f}(1)=-c_{\epsilon}-(1+o_{\epsilon}(1))\lambda_{\epsilon}\;\text{ and }\;\mathbf{f}(2)=-c_{\epsilon}+(1+o_{\epsilon}(1))\lambda_{\epsilon}\;.
\]
Therefore, we have
\begin{equation}
\lambda_{\epsilon}=\frac{1+o_{\epsilon}(1)}{2}(\mathbf{f}(2)-\mathbf{f}(1))\;.\label{bdl}
\end{equation}
This proves the boundedness of $\lambda_{\epsilon}.$
\end{proof}

\section{\label{sec4}Tightness}

The main result of the current section is the following theorem regarding
the tightness of the family of processes $\{\mathbf{y}_{\epsilon}(\cdot):\epsilon\in(0,\,1]\}$.
\begin{thm}
\label{p31}For all $i\in S_{\star}$ and for any sequence of Borel
probability measures $(\pi_{\epsilon})_{\epsilon>0}$ concentrated
on $\mathcal{V}_{i}$, the family $\{\mathbf{Q}_{\pi_{\epsilon}}^{\epsilon}:\epsilon\in(0,\,1]\}$
is tight on $D([0,\,\infty),\,S_{\star})$, and every limit point
$\mathbf{Q}^{*}$, as $\epsilon\rightarrow0$, of this sequence satisfies
\[
\mathbf{Q}^{*}(\mathbf{x}(0)=i)=1\text{\;\;and\;\;}\mathbf{Q}^{*}(\mathbf{x}(t)\neq\mathbf{x}(t-))=0\;\;\text{for all }t>0\;.
\]
\end{thm}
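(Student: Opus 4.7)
The plan is to verify Aldous' tightness criterion on $D([0,\infty), S_\star)$. Because $S_\star$ is finite, compact containment is trivial, so the entire tightness statement reduces to the oscillation control: for every $\eta, T > 0$,
\begin{equation*}
\lim_{\gamma \downarrow 0}\,\limsup_{\epsilon \downarrow 0}\,\sup_{\tau \le T,\, \theta \le \gamma}\, \mathbb{P}_{\pi_\epsilon}^\epsilon\bigl(|\mathbf{y}_\epsilon(\tau+\theta) - \mathbf{y}_\epsilon(\tau)| > \eta\bigr) = 0,
\end{equation*}
where the supremum runs over stopping times $\tau \le T$ of the filtration generated by $\mathbf{y}_\epsilon$.

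The workhorse is Theorem \ref{t51}. For each $\mathbf{f}:S_\star \to \mathbb{R}$, the bounded solution $\phi_\epsilon = \phi_\epsilon^{\mathbf{f}}$ of the Poisson equation \eqref{p321} together with It\^o's formula turns
\begin{equation*}
M_\epsilon(t) := \phi_\epsilon(\widehat{\boldsymbol{x}}_\epsilon(t)) - \phi_\epsilon(\widehat{\boldsymbol{x}}_\epsilon(0)) - \int_0^t (\theta_\epsilon \mathscr{L}_\epsilon \phi_\epsilon)(\widehat{\boldsymbol{x}}_\epsilon(s))\, ds
\end{equation*}
into an $\mathbb{\widehat{P}}_{\pi_\epsilon}^\epsilon$-martingale with quadratic variation $\langle M_\epsilon \rangle_t = 2\theta_\epsilon \epsilon \int_0^t |\nabla \phi_\epsilon(\widehat{\boldsymbol{x}}_\epsilon(s))|^2\, ds$. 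I would then evaluate $M_\epsilon$ at the stopping time $S^\epsilon(t)$ defined in \eqref{Se}. The change of variable $u = T^\epsilon(s)$ gives $\int_0^{S^\epsilon(t)} \chi_{\mathcal{V}_i}(\widehat{\boldsymbol{x}}_\epsilon(s))\,ds = \int_0^t \mathbf{1}\{\mathbf{y}_\epsilon(u) = i\}\,du$, and the uniform approximation $|\phi_\epsilon - \mathbf{f}(i)| = o_\epsilon(1)$ on $\mathcal{V}_i$ from \eqref{p322}, combined with $\mathbf{a}_\epsilon(i) = 1 + o_\epsilon(1)$ and $\widehat{\boldsymbol{x}}_\epsilon(S^\epsilon(t)) = \boldsymbol{y}_\epsilon(t) \in \mathcal{V}_\star$, yields
\begin{equation*}
\widetilde{M}_\epsilon(t) := M_\epsilon(S^\epsilon(t)) = \mathbf{f}(\mathbf{y}_\epsilon(t)) - \mathbf{f}(\mathbf{y}_\epsilon(0)) - \int_0^t (L_\mathbf{y}\mathbf{f})(\mathbf{y}_\epsilon(u))\,du + o_\epsilon(1),
\end{equation*}
uniformly on bounded $t$-intervals.

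Writing $\mathbf{f}(\mathbf{y}_\epsilon(\tau+\theta)) - \mathbf{f}(\mathbf{y}_\epsilon(\tau))$ as the sum of a drift of size at most $\theta \,\|L_\mathbf{y}\mathbf{f}\|_\infty \le C\gamma$, the martingale increment $\widetilde{M}_\epsilon(\tau+\theta) - \widetilde{M}_\epsilon(\tau)$, and an $o_\epsilon(1)$ error, Chebyshev combined with optional stopping reduces the Aldous probability, applied to any separating $\mathbf{f}$ (for instance $\mathbf{f} = \mathbf{e}_j$), to the expected quadratic variation increment $2\theta_\epsilon \epsilon\, \mathbb{E}_{\pi_\epsilon}^\epsilon\bigl[\int_{S^\epsilon(\tau)}^{S^\epsilon(\tau+\theta)} |\nabla \phi_\epsilon|^2\, ds\bigr]$. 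Under the invariant measure $\mu_\epsilon$, stationarity and $\theta_\epsilon \mathscr{D}_\epsilon(\phi_\epsilon) = 2\lambda_\epsilon = O(1)$ from \eqref{hg2} and \eqref{bdl}, together with $\mu_\epsilon(\Delta) = o_\epsilon(1)$ (which forces $S^\epsilon(\tau+\theta) - S^\epsilon(\tau) = \theta(1 + o_\epsilon(1))$ in mean), give the desired $O(\theta)$ bound. The main obstacle is that $\pi_\epsilon$ may be singular with respect to $\mu_\epsilon$; I plan to resolve this by exploiting the fact that inside the well $\mathcal{W}_i$ the diffusion equilibrates on the unrescaled time scale $O(1)$, that is on a rescaled time scale of order $\theta_\epsilon^{-1} \downarrow 0$, so that after a vanishing initial waiting time $t_0(\epsilon) \downarrow 0$ the law of $\widehat{\boldsymbol{x}}_\epsilon(t_0)$ has density with respect to $\mu_\epsilon|_{\mathcal{W}_i}$ bounded uniformly in $\epsilon$ via local parabolic Harnack estimates, reducing the bound to the stationary case at the cost of an asymptotically negligible contribution to the oscillation probability.

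Finally, the two properties of the limit points are almost free: $\mathbf{Q}^*(\mathbf{x}(0) = i) = 1$ because $\mathbf{y}_\epsilon(0) = \Psi(\widehat{\boldsymbol{x}}_\epsilon(0)) = i$ is $\pi_\epsilon$-almost sure by assumption, and the absence of fixed-time discontinuities $\mathbf{Q}^*(\mathbf{x}(t) \neq \mathbf{x}(t-)) = 0$ for every $t > 0$ is a standard consequence of the Aldous oscillation estimate by specializing to $\tau \equiv t$ and sending $\theta \downarrow 0$ after $\epsilon \downarrow 0$, using that the metric on the finite set $S_\star$ is discrete.
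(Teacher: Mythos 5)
Your overall skeleton (Aldous' criterion, time-change $S^\epsilon$, the martingale $\widetilde{M}_\epsilon$ built from the Poisson solution of Theorem~\ref{t51}) is the right frame, and the treatment of the two limit-point properties at the end is fine. However, the route you take for the oscillation estimate is genuinely different from the paper's and contains a real gap.

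The paper does not estimate the oscillation probability via the quadratic variation of $\widetilde{M}_\epsilon$. Instead it reduces, via the strong Markov property and the small-time-change lemma (Lemma~\ref{p35}), to a uniform hitting-time bound
\[
\sup_{\boldsymbol{x}\in\mathcal{V}_i}\mathbb{P}_{\boldsymbol{x}}^{\epsilon}\bigl[H_{\mathcal{V}_{\star}\setminus\mathcal{V}_{i}}\le a\theta_{\epsilon}\bigr]\le Ca+o_{\epsilon}(1)\;,
\]
which is Proposition~\ref{p33}. That proposition is proved \emph{pointwise in $\boldsymbol{x}$}, not in $L^2(\mu_\epsilon)$: one applies It\^o to the test function $\phi_\epsilon=\phi_\epsilon^{\mathbf{b}_i}$, uses the $L^\infty$ control from part~(3) of Theorem~\ref{t51}, and, crucially, uses a maximum-principle argument to show that $\phi_\epsilon+\alpha_\epsilon\ge 0$ globally and $\ge 1/2$ on $\mathcal{V}_\star\setminus\mathcal{V}_i$. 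The payoff of staying in $L^\infty$ is that the bound holds for \emph{every} starting point in $\mathcal{V}_i$, hence for every $\pi_\epsilon$ supported there, with no need to compare $\pi_\epsilon$ with $\mu_\epsilon$.

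Your $L^2$ route runs into exactly that comparison. You correctly reduce to bounding $2\theta_\epsilon\epsilon\,\mathbb{E}_{\pi_\epsilon}^\epsilon\bigl[\int_{S^\epsilon(\tau)}^{S^\epsilon(\tau+\theta)}|\nabla\phi_\epsilon|^2\,ds\bigr]$, and under the stationary law $\mu_\epsilon$ the identity $\theta_\epsilon\mathscr{D}_\epsilon(\phi_\epsilon)=2\lambda_\epsilon=O(1)$ does give $O(\theta)$. But for $\pi_\epsilon$ concentrated on $\mathcal{V}_i$ — which may be a Dirac mass — this is not available, and your proposed remedy (``after a vanishing rescaled time $t_0(\epsilon)\downarrow 0$ the law has density bounded uniformly in $\epsilon$ with respect to $\mu_\epsilon|_{\mathcal{W}_i}$, by parabolic Harnack'') is an assertion, not a proof, and it is not straightforward: the operator $\epsilon\Delta-\nabla U\cdot\nabla$ degenerates as $\epsilon\downarrow 0$, so Harnack constants blow up without rescaling, and ``equilibration within the well on the unrescaled $O(1)$ time scale with an $\epsilon$-uniform $L^\infty$ density bound'' is precisely the sort of statement the paper avoids having to prove. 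Indeed, even the weaker statement that the time spent in $\Delta$ is negligible starting from an arbitrary point of $\mathcal{V}_\star$ (Proposition~\ref{p34}) requires the whole Freidlin--Wentzell appendix in the paper; the simple $L^p$-density proof is explicitly flagged there as covering only the case \eqref{clp}. So the initial-distribution step is a genuine gap in your argument, and the paper's maximum-principle route is precisely how it is circumvented.

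One smaller point: your argument needs uniform control of $S^\epsilon(\tau+\theta)-S^\epsilon(\tau)$ not just in mean but with high probability to localize the quadratic variation integral; the paper obtains this in Lemma~\ref{p35} from Proposition~\ref{p34}, which again ultimately rests on the general-initial-distribution version proved in the appendix, not on the $L^p$-density shortcut you appear to be invoking implicitly.
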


We first introduce in Subsection \ref{sec43} two main ingredients
of the proof of the tightness. These technical estimates are the tight
bound of the transition time from a valley to other valleys (Proposition
\ref{p33}), and the negligibility of the time spent by $\widehat{\boldsymbol{x}}_{\epsilon}(t)$
in $\Delta$ (Proposition \ref{p34}). These are common technical
steps in the proof of tightness in the metastable situation, and Beltran
and Landim \cite{BL1,BL2} developed a robust methodology to verify
these when the underlying dynamics are discrete Markov chain. In \cite{LS3},
the corresponding tightness when the underlying dynamics is a $1$-dimensional
diffusion is obtained. The common feature for these models which allows
to prove the tightness is the coupling of two trajectories starting
from different points in the same well. Since two diffusion processes
living in $\mathbb{R}^{d}$, $d\ge2$, cannot be exactly coupled,
we have to developed another machinery. We shall use Theorem \ref{t51}
to bound the inter-valleys transition times, and Freidlin-Wentzell
theory \cite{FW1} for the negligibility of the time spent outside
valleys. Then, the proof of Theorem \ref{p31} is given in Subsection
\ref{sec44}.

\subsection{\label{sec43}Two preliminary estimates}

For $\mathcal{A}\subset\mathbb{R}^{d}$, we denote by $H_{\mathcal{A}}$
the hitting time of the set $\mathcal{A}$. Then the hitting time
$H_{\mathcal{V}_{\star}\setminus\mathcal{V}_{i}}$ under the law $\mathbb{P}_{\boldsymbol{x}}^{\epsilon}$,
$\boldsymbol{x}\in\mathcal{V}_{i}$, can be regarded as the transition
time from valley $\mathcal{V}_{i}$ to other deepest valleys. We now
verify that this inter-valley transition time cannot be too small.
\begin{prop}
\label{p33}For all $i\in S_{\star}$, it holds that,
\begin{equation}
\lim_{a\rightarrow0}\limsup_{\epsilon\rightarrow0}\sup_{\boldsymbol{x}\in\mathcal{V}_{i}}\mathbb{P}_{\boldsymbol{x}}^{\epsilon}\left[H_{\mathcal{V}_{\star}\setminus\mathcal{V}_{i}}\le a\theta_{\epsilon}\right]=0\;.\label{ep471}
\end{equation}
\end{prop}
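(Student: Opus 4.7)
The plan is to apply Theorem \ref{t51} with a judiciously chosen boundary datum and then run a Dynkin martingale argument. Fix $i\in S_\star$ and set $\mathbf{f}(i)=0$, $\mathbf{f}(j)=1$ for $j\in S_\star\setminus\{i\}$. Let $\phi_\epsilon$ be the corresponding solution of the Poisson equation \eqref{p321}, so that $\theta_\epsilon\mathscr{L}_\epsilon\phi_\epsilon=g_\epsilon$, where
\[
g_\epsilon:=\sum_{k\in S_\star}\mathbf{a}_\epsilon(k)\,(L_{\mathbf{y}}\mathbf{f})(k)\,\chi_{\mathcal{V}_k}
\]
is uniformly bounded in $\epsilon$, $\phi_\epsilon\to 0$ uniformly on $\mathcal{V}_i$, and $\phi_\epsilon\to 1$ uniformly on $\mathcal{V}_j$ for $j\in S_\star\setminus\{i\}$. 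Since Proposition \ref{p58} combined with Proposition \ref{p59} yields these bounds on the strictly larger sets $\mathcal{V}_k^{(1)}\supset\overline{\mathcal{V}_k}$, continuity of $\phi_\epsilon$ propagates the control up to the boundary $\partial\mathcal{V}_k$.

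Writing $\tau:=H_{\mathcal{V}_\star\setminus\mathcal{V}_i}$, Dynkin's formula at the bounded stopping time $\tau\wedge a\theta_\epsilon$ gives
\[
\mathbb{E}^\epsilon_{\boldsymbol{x}}\!\left[\phi_\epsilon(\boldsymbol{x}_\epsilon(\tau\wedge a\theta_\epsilon))\right]-\phi_\epsilon(\boldsymbol{x})=\theta_\epsilon^{-1}\,\mathbb{E}^\epsilon_{\boldsymbol{x}}\!\left[\int_0^{\tau\wedge a\theta_\epsilon}g_\epsilon(\boldsymbol{x}_\epsilon(s))\,ds\right].
\]
Since $\|g_\epsilon\|_\infty=O(1)$, the right-hand side is $O(a)$, uniformly in $\boldsymbol{x}$, and $\phi_\epsilon(\boldsymbol{x})=o_\epsilon(1)$ for $\boldsymbol{x}\in\mathcal{V}_i$. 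On the event $\{\tau\le a\theta_\epsilon\}$, continuity forces $\boldsymbol{x}_\epsilon(\tau)\in\bigcup_{j\ne i}\overline{\mathcal{V}_j}$, so $\phi_\epsilon(\boldsymbol{x}_\epsilon(\tau))\ge 1-o_\epsilon(1)$. Splitting the expectation on the left according to $\{\tau\le a\theta_\epsilon\}$ and its complement would then conclude the argument \emph{if} we could bound the contribution from $\{\tau>a\theta_\epsilon\}$ from below.

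This is exactly the main obstacle: a priori we only know that $\phi_\epsilon$ is bounded, and on $\{\tau>a\theta_\epsilon\}$ the process may have wandered into the unbounded set $\Delta$, where Theorem \ref{t51} gives no pointwise control. The resolution is to observe that $\phi_\epsilon$ is $\mathscr{L}_\epsilon$-harmonic on $\Delta$, since $g_\epsilon$ is supported in $\mathcal{V}_\star$. Letting $\sigma:=H_{\mathcal{V}_\star}$, a standard localization plus optional stopping argument (using boundedness of $\phi_\epsilon$ and $\sigma<\infty$ a.s., which follows from \eqref{gc}) yields
\[
\phi_\epsilon(\boldsymbol{y})=\mathbb{E}^\epsilon_{\boldsymbol{y}}\!\left[\phi_\epsilon(\boldsymbol{x}_\epsilon(\sigma))\right]\quad\text{for all }\boldsymbol{y}\in\Delta.
\]
Since $\boldsymbol{x}_\epsilon(\sigma)\in\partial\mathcal{V}_\star\subset\bigcup_k\mathcal{V}_k^{(1)}$ and $\mathbf{f}\ge 0$, this gives $\phi_\epsilon\ge -o_\epsilon(1)$ globally. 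Plugging everything back,
\[
o_\epsilon(1)+O(a)\ge(1-o_\epsilon(1))\,\mathbb{P}^\epsilon_{\boldsymbol{x}}[\tau\le a\theta_\epsilon]-o_\epsilon(1),
\]
so $\sup_{\boldsymbol{x}\in\mathcal{V}_i}\mathbb{P}^\epsilon_{\boldsymbol{x}}[\tau\le a\theta_\epsilon]\le Ca+o_\epsilon(1)$. Taking $\limsup_{\epsilon\to 0}$ and then $a\to 0$ completes the proof. The only delicate point is the justification of the probabilistic representation on the unbounded domain $\Delta$, but this is routine given the Lyapunov structure provided by $U$.
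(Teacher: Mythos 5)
Your argument is correct and follows the paper's own proof almost verbatim: same choice $\mathbf{f}=\mathbf{b}_i$, same Dynkin/It\^o identity at $\tau\wedge a\theta_\epsilon$, same bound $O(a)+o_\epsilon(1)$ on the right-hand side, and the same split on $\{\tau\le a\theta_\epsilon\}$. The only (minor) divergence is in how you establish the global lower bound $\phi_\epsilon\ge -o_\epsilon(1)$ on $\Delta$: the paper shifts $\phi_\epsilon$ by $\alpha_\epsilon=o_\epsilon(1)$ and invokes the maximum principle for the $\mathscr{L}_\epsilon$-harmonic function $\widetilde\phi_\epsilon$ on $\Delta$, whereas you phrase the same fact probabilistically via optional stopping at $\sigma=H_{\mathcal{V}_\star}$. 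These are two expressions of one idea; your version has the small advantage of making explicit where the a.s.\ finiteness of $\sigma$ (hence the growth condition \eqref{gc}) enters, which the paper glosses over. You are also right to note that the estimates of Propositions~\ref{p58}--\ref{p59} actually hold on the enlarged sets $\mathcal{V}_k^{(1)}\supset\overline{\mathcal{V}_k}$, which is what permits the pointwise control on $\partial\mathcal{V}_\star$; the paper uses this implicitly. No gap.
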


\begin{rem}
The result of Freidlin and Wentzell \cite{FW1} provides that, for
all $i\in S_{\star}$,
\begin{equation}
\limsup_{\epsilon\rightarrow0}\sup_{\boldsymbol{x}\in\mathcal{V}_{i}}\mathbb{P}_{\boldsymbol{x}}^{\epsilon}\left[H_{\mathcal{V}_{\star}\setminus\mathcal{V}_{i}}\le e^{-\eta/\epsilon}\theta_{\epsilon}\right]=0\;\;\,\text{for all }\eta>0\;.\label{erm48}
\end{equation}
This estimate is definitely weaker than \eqref{ep471}. On the other
hand, Bovier et. al. \cite{BEGK2} demonstrated that $\theta_{\epsilon}^{-1}H_{\mathcal{V}_{\star}\setminus\mathcal{V}_{i}}$
converges to an exponential random variable with constant mean, and
this result does implies \eqref{ep471}. However, in this paper, we
provide another proof without using this result. Two main advantages
of our proof of \eqref{ep471} is that it is short, and is has a good
chance to be applicable to the non-reversible case \eqref{e15}; our
proof of \eqref{ep471} relies only on our analysis on the elliptic
equations carried out in the previous section. The reader can readily
notice that this result is a direct consequence of Theorem \ref{t51}.
\end{rem}

\begin{proof}
We fix $i\in S_{\star}$ and $\boldsymbol{x}\in\mathcal{V}_{i}$.
Consider a function $\mathbf{b}_{i}:S_{\star}\rightarrow\mathbb{R}$
given by
\[
\mathbf{b}_{i}(j)=\begin{cases}
0 & \text{if }j=i\\
1 & \text{if }j\in S_{\star}\setminus\{i\}\;.
\end{cases}
\]
Denote by $\phi_{\epsilon}=\phi_{\epsilon}^{i}$ the test function
we obtain in Theorem \ref{t51} for $\mathbf{f}=\mathbf{b}_{i}$.
Then, by Ito's formula and part (2) of Theorem \ref{t51}, we get
\begin{align*}
 & \mathbb{E}_{\boldsymbol{x}}^{\epsilon}\left[\phi_{\epsilon}(\boldsymbol{x}_{\epsilon}(a\theta_{\epsilon}\wedge H_{\mathcal{V}_{\star}\setminus\mathcal{V}_{i}}))\right]\\
 & \quad=\phi_{\epsilon}(\boldsymbol{x})+\sum_{i\in S_{\star}}\mathbb{E}_{\boldsymbol{x}}^{\epsilon}\left[\int_{0}^{a\theta_{\epsilon}\wedge H_{\mathcal{V}_{\star}\setminus\mathcal{V}_{i}}}\theta_{\epsilon}^{-1}\mathbf{a}_{\epsilon}(i)\,(L_{\mathbf{y}}\mathbf{f})(i)\,\zeta^{i}(\boldsymbol{x}_{\epsilon}(s))ds\right]\;.
\end{align*}
Note that the last integral is bounded by $Ca$ for some constant
$C>0$. Hence, by part (3) of Theorem \ref{t51}, the right-hand side
is bounded by $Ca+o_{\epsilon}(1)$.

Now we turn to the left-hand side. Again by part (3) of Theorem \ref{t51},
we can add small constant $\alpha_{\epsilon}=o_{\epsilon}(1)$ so
that $\widetilde{\phi}_{\epsilon}=\phi_{\epsilon}+\alpha_{\epsilon}\ge0$
on $\mathcal{V}_{\star}$. Then, by the maximum principle, $\widetilde{\phi}_{\epsilon}\ge0$
on $\mathbb{R}^{d}$, and furthermore, $\widetilde{\phi}_{\epsilon}\ge1/2$
on $\mathcal{V}_{\star}\setminus\mathcal{V}_{i}$ provided that $\epsilon$
is sufficiently small. Hence,
\[
\mathbb{E}_{\boldsymbol{x}}^{\epsilon}\left[\phi_{\epsilon}(\boldsymbol{x}_{\epsilon}(a\theta_{\epsilon}\wedge H_{\mathcal{V}_{\star}\setminus\mathcal{V}_{i}}))\right]\ge-\alpha_{\epsilon}+\frac{1}{2}\mathbb{P}_{\boldsymbol{x}}^{\epsilon}\left[H_{\mathcal{V}_{\star}\setminus\mathcal{V}_{i}}<a\theta_{\epsilon}\right]\;.
\]
Summing up, there exists a constant $C>0$ such that
\[
\mathbb{P}_{\boldsymbol{x}}^{\epsilon}\left[H_{\mathcal{V}_{\star}\setminus\mathcal{V}_{i}}<a\theta_{\epsilon}\right]\le Ca+o_{\epsilon}(1)\;,
\]
as desired.
\end{proof}
Now we show that the process $\boldsymbol{x}_{\epsilon}(t)$ does
not spend too much time in $\Delta$ (cf. \eqref{twoset}). Define
the amount of time the rescaled process $\widehat{\boldsymbol{x}}_{\epsilon}(\cdot)$
spends in the set $\Delta$ up to time $t$ as
\[
\widehat{\Delta}(t)=\widehat{\Delta}_{\epsilon}(t)=\int_{0}^{t}\chi_{\Delta}(\widehat{\boldsymbol{x}}_{\epsilon}(s))\,ds\;.
\]

\begin{prop}
\label{p34}For any sequence of Borel probability measures $(\pi_{\epsilon})_{\epsilon>0}$
concentrated on $\mathcal{V}_{\star}$, it holds that
\[
\lim_{\epsilon\rightarrow0}\mathbb{E}_{\pi_{\epsilon}}^{\epsilon}\left[\widehat{\Delta}(t)\right]=0\;\;\text{for all }t\ge0.
\]
\end{prop}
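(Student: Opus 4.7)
The plan is to combine classical Freidlin--Wentzell estimates on hitting times with the strong Markov property. The key observation is that any excursion of the process $\boldsymbol{x}_{\epsilon}$ from $\mathcal{V}_{\star}$ into $\Delta$ has expected duration much smaller than $\theta_{\epsilon}$, so that the total time spent in $\Delta$, once divided by $\theta_{\epsilon}$, is negligible.

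By Fubini's theorem,
\[
\mathbb{E}_{\pi_{\epsilon}}^{\epsilon}[\widehat{\Delta}(t)] \;=\; \frac{1}{\theta_{\epsilon}}\,\mathbb{E}_{\pi_{\epsilon}}^{\epsilon}\!\left[\int_{0}^{\theta_{\epsilon}t}\chi_{\Delta}(\boldsymbol{x}_{\epsilon}(s))\,ds\right].
\]
Fix a compact set $\mathcal{K}$ so large that the trajectory stays in $\mathcal{K}$ throughout $[0,\theta_{\epsilon}t]$ with probability $1-o_{\epsilon}(1)$; such $\mathcal{K}$ exists by the tightness condition \eqref{tc}. The first step is to show, via the Freidlin--Wentzell theory of \cite{FW1}, that
\[
\sup_{\boldsymbol{x}\in\Delta\cap\mathcal{K}}\mathbb{E}_{\boldsymbol{x}}^{\epsilon}[H_{\mathcal{V}_{\star}}] \;=\; o(\theta_{\epsilon}).
\]
This is because $\Delta\cap\mathcal{K}$ decomposes into (i) the shallow wells $\mathcal{W}_{j}$, $j\in S\setminus S_{\star}$, whose exit time has Freidlin--Wentzell order $e^{(H-h_{j})/\epsilon}$ with $h_{j}>h$, so that this quantity is $o(\theta_{\epsilon})$; and (ii) the transit regions outside all wells, which are traversed by the deterministic flow in $O(1)$ time.

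The second step is the excursion decomposition. Let $\sigma_{k}$ denote the $k$-th exit time of $\boldsymbol{x}_{\epsilon}$ from $\mathcal{V}_{\star}$ and $\tau_{k}$ the subsequent return time to $\mathcal{V}_{\star}$. Then $\int_{0}^{\theta_{\epsilon}t}\chi_{\Delta}(\boldsymbol{x}_{\epsilon}(s))\,ds = \sum_{k}\bigl(\tau_{k}\wedge\theta_{\epsilon}t - \sigma_{k-1}\wedge\theta_{\epsilon}t\bigr)$. By the strong Markov property applied at each $\sigma_{k-1}$ together with the preceding FW bound, each expected excursion length is $o(\theta_{\epsilon})$. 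It remains to bound the expected number of such excursions by $O(t)$: here Proposition \ref{p33} is used, which guarantees that between two genuine inter-valley transitions the process rests in a single deep valley for a duration of order $\theta_{\epsilon}$, so that only $O(t)$ such transitions occur in time $\theta_{\epsilon}t$.

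The main obstacle is the phenomenon of rapid oscillations of $\boldsymbol{x}_{\epsilon}$ across the boundary $\partial\mathcal{V}_{\star}$: the naive counting of excursions could be very large. To circumvent this, the proof works with the enlarged valleys $\mathcal{V}_{i}^{(1)}\supset\mathcal{V}_{i}$ introduced in Section \ref{s53}, replacing $\mathcal{V}_{\star}$ by $\bigcup_{i\in S_{\star}}\mathcal{V}_{i}^{(1)}$ in the excursion decomposition. Since this set is separated from $\partial\mathcal{V}_{\star}$ by a buffer of energy width $\tfrac{1}{4}J^{2}\delta^{2}$, each excursion from it must cross a substantial barrier, making the excursion count tractable, while the time spent in $\mathcal{V}_{\star}^{(1)}\setminus\mathcal{V}_{\star}$ is itself $o(1)$ in the rescaled time by a further direct invariant-measure computation based on Proposition \ref{p21}.
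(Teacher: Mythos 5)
Your proposal correctly identifies the two qualitative ingredients (short excursion lengths and a bounded number of excursions), but the way you combine them has several genuine gaps, and one of them is precisely the difficulty the paper's appendix is designed to handle.

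First, the counting argument: you invoke Proposition \ref{p33} to bound the number of excursions by $O(t)$. But Proposition \ref{p33} controls only \emph{inter-valley} transitions (hitting $\mathcal{V}_{\star}\setminus\mathcal{V}_{i}$ from $\mathcal{V}_{i}$). An excursion from $\mathcal{V}_{\star}$ into $\Delta$ that returns to the \emph{same} valley $\mathcal{V}_{i}$ is not counted by this estimate, and these are exactly the excursions one expects to be numerous. Proposition \ref{p33} gives no bound on their number, so the step ``expected number of excursions is $O(t)$'' is unsupported. Relatedly, even if both the expected excursion length and the expected number of excursions were controlled, their product does not bound $\mathbb{E}[\sum_k(\tau_k-\sigma_{k-1})]$ without a decoupling (Wald-type) argument, since the number of excursions fitting in $[0,\theta_{\epsilon}t]$ is correlated with their lengths. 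The paper's appendix confronts exactly this by proving an exponential tail bound for $\Delta(t)$ (Lemma \ref{kc}) via a cycle decomposition $\mathcal{A}_{0}\subset\cdots\subset\mathcal{A}_{l+1}$ and the Freidlin--Wentzell estimates of Theorem \ref{FW}; the excursion count to each level $\partial\mathcal{A}_{k}$ is controlled probabilistically using \eqref{fw2}, not by a deterministic $O(t)$ bound.

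Second, the buffer you introduce is on the wrong side. Enlarging $\mathcal{V}_{\star}$ to $\bigcup_{i}\mathcal{V}_{i}^{(1)}$ does not prevent rapid oscillations across $\partial\mathcal{V}_{i}^{(1)}$: a trajectory can cross that boundary back and forth without ever approaching $\partial\mathcal{V}_{\star}$, because the energy barrier you cite lies \emph{between} $\partial\mathcal{V}_{i}^{(1)}$ and $\partial\mathcal{V}_{i}$, i.e.\ below the set you are exiting, not above it. The paper's remedy is the opposite: an excursion is declared over only when the process returns to a strictly \emph{smaller} set $\mathcal{D}_{\star}\subset\mathcal{V}_{\star}$ near the bottoms of the wells (see the definition of $\sigma_{i},\tau_{i}$ in \eqref{ap0}). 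This forced descent between consecutive excursions is what suppresses pathological boundary crossings.

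Third, the ``direct invariant-measure computation'' you propose for the shell $\mathcal{V}_{\star}^{(1)}\setminus\mathcal{V}_{\star}$ only works when $\pi_{\epsilon}$ has a density with a uniform $L^{p}$ bound against $\mu_{\epsilon}$, which is exactly the restriction \eqref{clp} the paper imposes for its short-cut proof of Proposition \ref{p34}. For an arbitrary $\pi_{\epsilon}$ concentrated on $\mathcal{V}_{\star}$ (e.g.\ a point mass at a minimum, where $\hat\mu_\epsilon$ blows up as $\epsilon\to 0$), that bound is not available, so this step fails in the generality required.

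In short: the first step of your plan (uniform $o(\theta_{\epsilon})$ bound on expected return time from $\Delta$) is sound, but the counting of excursions, the oscillation-control device, and the invariant-measure comparison all break down, and these are precisely where the paper's cycle hierarchy and the exponential tail estimate of Lemma \ref{kc} are indispensable.
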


The proof of this proposition can be deduced by combining several
classical results of Freidlin and Wentzell \cite{FW1} in a careful
manner. Since we have to introduce numerous new notations and have
to recall previous results that are not related to the other part
of the current article, we postpone the full proof of this proposition
to the appendix. Here, we only provide the proof of Proposition \ref{p34}
when $\pi_{\epsilon}$ has a density function with respect to the
equilibrium measure $\mu_{\epsilon}$ (cf. \eqref{mu}) for each $\epsilon>0$,
and this density function belongs to $L^{p}(\mu_{\epsilon})$ for
some $p>1$, with a uniform $L^{p}$ bound, i.e.,
\begin{equation}
\limsup_{\epsilon\rightarrow0}\int_{\mathcal{V}_{\star}}\left(\frac{d\pi_{\epsilon}}{d\mu_{\epsilon}}\right)^{p}d\mu_{\epsilon}<\infty\;.\label{clp}
\end{equation}
For this case, we can offer a simple proof.
\begin{proof}[Proof of Proposition \ref{p34} under the assumption \eqref{clp}]
We fix $t\ge0$. Write
\[
u_{\epsilon}(\boldsymbol{x})=\mathbb{E}_{\boldsymbol{x}}^{\epsilon}\left[\widehat{\Delta}(t)\right]\;.
\]
Then, by Fubini's theorem we get
\begin{equation}
\int_{\mathbb{R}^{d}}u_{\epsilon}\,d\mu_{\epsilon}=\mathbb{E}_{\mu_{\epsilon}}^{\epsilon}\left[\int_{0}^{t}\chi_{\Delta}(\widehat{\boldsymbol{x}}_{\epsilon}(s))\,ds\right]=\int_{0}^{t}\mathbb{P}_{\mu_{\epsilon}}^{\epsilon}\left[\widehat{\boldsymbol{x}}_{\epsilon}(s)\in\Delta\right]ds=t\mu_{\epsilon}(\Delta)\;.\label{vt1}
\end{equation}
Write $f_{\epsilon}=\frac{d\pi_{\epsilon}}{d\mu_{\epsilon}}$ so that
we can write
\[
\mathbb{E}_{\pi_{\epsilon}}^{\epsilon}\left[\widehat{\Delta}(t)\right]=\int_{\mathbb{R}^{d}}u_{\epsilon}f_{\epsilon}\,d\mu_{\epsilon}.
\]
Now we apply H\"older's inequality, the bound $u_{\epsilon}\le t$
and \eqref{vt1} to the right-hand side of the previous identity to
deduce
\[
\mathbb{E}_{\pi_{\epsilon}}^{\epsilon}\left[\widehat{\Delta}(t)\right]\le\left[\int_{\mathbb{R}^{d}}u_{\epsilon}\,d\mu_{\epsilon}\right]^{1/q}\left[\int_{\mathbb{R}^{d}}u_{\epsilon}f_{\epsilon}^{p}\,d\mu_{\epsilon}\right]^{1/p}\le t^{1/q}\mu_{\epsilon}(\Delta)^{1/q}\,\left[\int_{\mathbb{R}^{d}}f_{\epsilon}^{p}\,d\mu_{\epsilon}\right]^{1/p}\;,
\]
where $q$ is the conjugate exponent of $p$ satisfying $\frac{1}{p}+\frac{1}{q}=1$.
This, Proposition \ref{p21} and the condition \eqref{clp} complete
the proof.
\end{proof}

\subsection{\label{sec44}Proof of tightness}

For the completeness of the discussion, we start by summarizing well-known
properties related to the current situation. For the full discussion
of this material with the detailed proof, we refer to \cite[Section 7]{LS3}.
Denote by $\{\mathscr{F}_{t}^{0}:t\ge0\}$ the natural filtration
of $C([0,\,\infty),\,\mathbb{R}^{d})$ with respect to $\widehat{\boldsymbol{x}}_{\epsilon}(\cdot)$,
namely,
\[
\mathscr{F}_{t}^{0}=\sigma(\widehat{\boldsymbol{x}}_{\epsilon}(s):s\in[0,\,t])\;.
\]
and define $\{\mathscr{F}_{t}:t\ge0\}$ as the usual augmentation
of $\{\mathscr{F}_{t}^{0}:t\ge0\}$ with respect to $\mathbb{\widehat{P}}_{\pi_{\epsilon}}^{\epsilon}$
where $(\pi_{\epsilon})$ is a sequence of probability measures that
appeared in Theorem \ref{p31}. Define $\mathscr{G}_{t}=\mathscr{F}_{S^{\epsilon}(t)}$
for $t\ge0$, where $S^{\epsilon}$ was defined in \eqref{Se}.
\begin{lem}
\label{lem32} The following statements are true:
\begin{enumerate}
\item For each $u\ge0$, the random time $S^{\epsilon}(u)$ is a stopping
time with respect to the filtration $\{\mathscr{F}_{t}\}$.
\item Let $\tau$ be a stopping time with respect to the filtration $\{\mathscr{G}_{t}\}$.
Then, $S^{\epsilon}(\tau)$ is a stopping time with respect to the
filtration $\{\mathscr{F}_{t}\}$.
\item The process $\{\boldsymbol{y}_{\epsilon}(t):t\ge0\}$ defined in \eqref{e241}
is a continuous-time Markov chain on $\mathcal{V}_{\star}$ with respect
to the filtration $\{\mathscr{G}_{t}\}$.
\end{enumerate}
\end{lem}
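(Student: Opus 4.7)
The plan is to prove the three parts in order, each essentially built on the structure of the continuous additive functional $T^\epsilon$ and the strong Markov property of the diffusion $\widehat{\boldsymbol{x}}_\epsilon$.

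For Part (1), the central observation is that the functional $T^\epsilon(t)=\int_0^t \chi_{\mathcal{V}_\star}(\widehat{\boldsymbol{x}}_\epsilon(s))\,ds$ is continuous and non-decreasing in $t$, so its generalized inverse satisfies the pointwise identity
\[
\{S^\epsilon(u)<t\}=\{T^\epsilon(t)>u\}\;.
\]
Indeed, if $T^\epsilon(t)>u$ then continuity provides some $s<t$ with $T^\epsilon(s)>u$, forcing $S^\epsilon(u)\le s<t$; conversely, $T^\epsilon(t)\le u$ places $t$ in $\{s:T^\epsilon(s)\le u\}$, so $S^\epsilon(u)\ge t$. Since $T^\epsilon(t)$ is $\mathscr{F}_t^0$-measurable as a path functional up to time $t$, the event on the right lies in $\mathscr{F}_t$; combined with the right-continuity of the augmented filtration, this shows $S^\epsilon(u)$ is an $\{\mathscr{F}_t\}$-stopping time.

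For Part (2), I would first note that $S^\epsilon$ is itself right-continuous: for $u_n\downarrow u$ the sets $\{s:T^\epsilon(s)\le u_n\}$ decrease to $\{s:T^\epsilon(s)\le u\}$ by continuity of $T^\epsilon$, so their suprema converge. Using right-continuity one may express
\[
\{S^\epsilon(\tau)<t\}=\bigcup_{q\in\mathbb{Q}_+}\bigl(\{\tau<q\}\cap\{S^\epsilon(q)<t\}\bigr).
\]
Because $\tau$ is a $\{\mathscr{G}_t\}$-stopping time and $\mathscr{G}_q=\mathscr{F}_{S^\epsilon(q)}$, the set $\{\tau<q\}$ lies in $\mathscr{F}_{S^\epsilon(q)}$; by the definition of the stopped $\sigma$-algebra, $\{\tau<q\}\cap\{S^\epsilon(q)\le s\}\in\mathscr{F}_s$ for every $s$. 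Unioning over rationals $s<t$ shows each summand belongs to $\mathscr{F}_t$, and Part (2) follows.

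For Part (3), Part (1) lets me invoke the strong Markov property of $\widehat{\boldsymbol{x}}_\epsilon$ at the stopping time $S^\epsilon(t)$: conditional on $\mathscr{F}_{S^\epsilon(t)}=\mathscr{G}_t$, the shifted diffusion $(\widehat{\boldsymbol{x}}_\epsilon(S^\epsilon(t)+r))_{r\ge 0}$ has the law of $\widehat{\boldsymbol{x}}_\epsilon$ starting from $\widehat{\boldsymbol{x}}_\epsilon(S^\epsilon(t))=\boldsymbol{y}_\epsilon(t)$. The key algebraic identity I would record is that, by additivity of $T^\epsilon$,
\[
S^\epsilon(t+u)=S^\epsilon(t)+\inf\Bigl\{r\ge 0:\int_0^r \chi_{\mathcal{V}_\star}\bigl(\widehat{\boldsymbol{x}}_\epsilon(S^\epsilon(t)+r')\bigr)\,dr'>u\Bigr\}\;,
\]
which writes $\boldsymbol{y}_\epsilon(t+u)=\widehat{\boldsymbol{x}}_\epsilon(S^\epsilon(t+u))$ as one and the same measurable functional of the post-$S^\epsilon(t)$ trajectory of $\widehat{\boldsymbol{x}}_\epsilon$, independently of $t$. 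Composing with the strong Markov conclusion yields simultaneously the Markov property and the time-homogeneity of the transition mechanism of $\boldsymbol{y}_\epsilon(\cdot)$ with respect to $\{\mathscr{G}_t\}$.

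The only genuine subtlety I anticipate concerns the plateaus of $T^\epsilon$, which correspond to excursions of $\widehat{\boldsymbol{x}}_\epsilon$ outside $\mathcal{V}_\star$; it is precisely to accommodate these that one must consistently work with the strict inequality $\{T^\epsilon(t)>u\}$ and invoke the right-continuity of $S^\epsilon$ and of the augmentation. Beyond this bookkeeping the argument is a standard application of time-change theory for strong Markov processes, and I expect it can be dispatched compactly with references to \cite{LS3}.
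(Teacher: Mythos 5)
Your proof is correct and self-contained. The paper itself does not supply a proof of this lemma; it simply defers to \cite[Lemma 7.2]{LS3}, so the argument you have written fills in exactly the kind of reasoning that citation carries. Your three steps are the standard time-change argument done carefully: in Part (1) the event identity $\{S^{\epsilon}(u)<t\}=\{T^{\epsilon}(t)>u\}$, valid because $T^{\epsilon}$ is continuous and nondecreasing, together with the progressive measurability of the additive functional $T^{\epsilon}$ and the right-continuity of the augmented filtration; in Part (2) the right-continuity of $S^{\epsilon}$, the rational decomposition of $\{S^{\epsilon}(\tau)<t\}$, and the relation $\mathscr{G}_{q}=\mathscr{F}_{S^{\epsilon}(q)}$, which together convert a $\{\mathscr{G}_{t}\}$-stopping time into an $\{\mathscr{F}_{t}\}$-optional and hence stopping time; and in Part (3) the additivity identity writing $S^{\epsilon}(t+u)-S^{\epsilon}(t)$ as a measurable functional of the post-$S^{\epsilon}(t)$ path alone, composed with the strong Markov property of $\widehat{\boldsymbol{x}}_{\epsilon}$ at $S^{\epsilon}(t)$, yields the Markov property and time-homogeneity of $\boldsymbol{y}_{\epsilon}(\cdot)$ relative to $\{\mathscr{G}_{t}\}$. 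The only small point worth making explicit is that the relation $T^{\epsilon}(S^{\epsilon}(t))=t$ underlying your decomposition in Part (3), and hence the finiteness of $S^{\epsilon}(t)$, holds on the almost-sure event that $T^{\epsilon}(\infty)=\infty$, which is guaranteed here by the positive recurrence of the diffusion.
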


\begin{proof}
See \cite[Lemma 7.2 and the paragraph below]{LS3}.
\end{proof}
For $M>0$, define $\mathcal{\mathscr{T}}_{M}$ as the collection
of stopping times with respect to the filtration $\{\mathscr{G}_{t}\}_{t\ge0}$
which is bounded by $M$. The following lemma is required to apply
the Aldous criterion to prove the tightness.
\begin{lem}
\label{p35}For any sequence of Borel probability measures $(\pi_{\epsilon})_{\epsilon>0}$
concentrated on $\mathcal{V}_{\star}$ and for all $M>0$, we have
\[
\lim_{a_{0}\rightarrow0}\limsup_{\epsilon\rightarrow0}\sup_{\tau\in\mathcal{\mathscr{T}}_{M}}\sup_{a\in(0,\,a_{0})}\mathbb{P}_{\pi_{\epsilon}}^{\epsilon}\left[S^{\epsilon}(\tau+a)-S^{\epsilon}(\tau)\ge2a_{0}\right]=0\;.
\]
\end{lem}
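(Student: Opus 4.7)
The plan is to reduce the bound on the jump $S^{\epsilon}(\tau+a)-S^{\epsilon}(\tau)$ to the real time that $\widehat{\boldsymbol{x}}_{\epsilon}(\cdot)$ spends outside $\mathcal{V}_{\star}$, and then to apply the strong Markov property at the stopping time $\sigma:=S^{\epsilon}(\tau)$ together with Proposition~\ref{p34}. I begin with the tautological identity
\[
S^{\epsilon}(\tau+a)-S^{\epsilon}(\tau)\;=\;a\;+\;\int_{S^{\epsilon}(\tau)}^{S^{\epsilon}(\tau+a)}\chi_{\Delta}(\widehat{\boldsymbol{x}}_{\epsilon}(u))\,du\,,
\]
which is immediate from $T^{\epsilon}(S^{\epsilon}(\tau+a))-T^{\epsilon}(S^{\epsilon}(\tau))=a$. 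On the event $\{S^{\epsilon}(\tau+a)-S^{\epsilon}(\tau)\ge 2a_{0}\}$ with $a<a_{0}$, we have $[\sigma,\,\sigma+2a_{0}]\subset[\sigma,\,S^{\epsilon}(\tau+a)]$, so during $[\sigma,\,\sigma+2a_{0}]$ the trace time advances by at most $a<a_{0}$ and hence at least $a_{0}$ units of real time are spent in $\Delta$. This event is therefore contained in $\{\widehat{\Delta}_{\epsilon}(\sigma+2a_{0})-\widehat{\Delta}_{\epsilon}(\sigma)\ge a_{0}\}$, and Markov's inequality yields
\[
\mathbb{P}_{\pi_{\epsilon}}^{\epsilon}\bigl[S^{\epsilon}(\tau+a)-S^{\epsilon}(\tau)\ge 2a_{0}\bigr]\;\le\;\frac{1}{a_{0}}\,\mathbb{E}_{\pi_{\epsilon}}^{\epsilon}\bigl[\widehat{\Delta}_{\epsilon}(\sigma+2a_{0})-\widehat{\Delta}_{\epsilon}(\sigma)\bigr]\,.
\]

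By Lemma~\ref{lem32}(2), $\sigma$ is a stopping time for $\{\mathscr{F}_{t}\}$, and by construction $\widehat{\boldsymbol{x}}_{\epsilon}(\sigma)=\boldsymbol{y}_{\epsilon}(\tau)\in\mathcal{V}_{\star}$. Applying the strong Markov property of $\widehat{\boldsymbol{x}}_{\epsilon}(\cdot)$ at $\sigma$, the right-hand side above equals $a_{0}^{-1}\,\mathbb{E}_{\rho_{\epsilon}^{\tau}}^{\epsilon}[\widehat{\Delta}_{\epsilon}(2a_{0})]$, where $\rho_{\epsilon}^{\tau}$ denotes the law of $\boldsymbol{y}_{\epsilon}(\tau)$ under $\mathbb{\widehat{P}}_{\pi_{\epsilon}}^{\epsilon}$. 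Crucially, $\rho_{\epsilon}^{\tau}$ is a Borel probability measure concentrated on $\mathcal{V}_{\star}$, so Proposition~\ref{p34} is in principle applicable to it.

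The remaining task is to verify that $\sup_{\tau\in\mathscr{T}_{M}}\mathbb{E}_{\rho_{\epsilon}^{\tau}}^{\epsilon}[\widehat{\Delta}_{\epsilon}(2a_{0})]\to 0$ as $\epsilon\to 0$ for each fixed $a_{0}>0$. This follows from Proposition~\ref{p34} by a simple extraction argument: if the supremum did not vanish along some subsequence $\epsilon_{k}\to 0$, one could choose $\tau_{k}\in\mathscr{T}_{M}$ for which $\mathbb{E}_{\rho_{\epsilon_{k}}^{\tau_{k}}}^{\epsilon_{k}}[\widehat{\Delta}_{\epsilon_{k}}(2a_{0})]\ge c>0$, then set $\pi'_{\epsilon_{k}}:=\rho_{\epsilon_{k}}^{\tau_{k}}$ (extending $\pi'_{\epsilon}$ arbitrarily on $\mathcal{V}_{\star}$ for the remaining $\epsilon$), and apply Proposition~\ref{p34} to the sequence $(\pi'_{\epsilon})$ to reach a contradiction. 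Combining the three steps gives
\[
\limsup_{\epsilon\to 0}\;\sup_{\tau\in\mathscr{T}_{M}}\;\sup_{a\in(0,a_{0})}\;\mathbb{P}_{\pi_{\epsilon}}^{\epsilon}\bigl[S^{\epsilon}(\tau+a)-S^{\epsilon}(\tau)\ge 2a_{0}\bigr]\;=\;0
\]
for every $a_{0}>0$, which is in fact stronger than the stated claim.

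The main obstacle is precisely this last uniformity in $\tau\in\mathscr{T}_{M}$: Proposition~\ref{p34} is formulated for a single $\epsilon$-indexed sequence of initial measures rather than for a family indexed by stopping times, and one cannot naively apply it to the random initial distribution $\rho_{\epsilon}^{\tau}$. The diagonal extraction outlined above circumvents this by invoking the fact that Proposition~\ref{p34} is valid for \emph{every} sequence of Borel probability measures concentrated on $\mathcal{V}_{\star}$.
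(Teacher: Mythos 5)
Your argument is correct, but it follows a genuinely different route from the paper's. Both proofs start from the same containment: on $\{S^\epsilon(\tau+a)-S^\epsilon(\tau)\ge 2a_0\}$ with $a<a_0$, the rescaled process spends at least $2a_0-a>a_0$ units of time in $\Delta$ during the (real-time) interval of length $2a_0$ beginning at $\sigma=S^\epsilon(\tau)$. You then invoke the strong Markov property at the stopping time $\sigma$ (justified by Lemma \ref{lem32}), which reduces everything to $\mathbb{E}_{\rho_\epsilon^\tau}^\epsilon[\widehat\Delta(2a_0)]$, where $\rho_\epsilon^\tau$ is the law of $\boldsymbol{y}_\epsilon(\tau)\in\mathcal{V}_\star$; the uniformity in $\tau\in\mathscr{T}_M$ is recovered by a subsequence extraction that exploits the fact that Proposition \ref{p34} holds for \emph{every} sequence of initial laws on $\mathcal{V}_\star$. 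The paper instead avoids the strong Markov property entirely: it splits on the event $\{S^\epsilon(\tau)\le 2M\}$, on which the interval $[\sigma,\sigma+2a_0]$ is contained in $[0,2M+2a_0]$, so the $\Delta$-occupation time is dominated by the deterministic quantity $\widehat\Delta(2M+2a_0)$, and on the complementary event observes that $S^\epsilon(\tau)>2M$ together with $\tau\le M$ forces $\widehat\Delta(2M)\ge M$; both pieces are then controlled by Chebyshev and Proposition \ref{p34} applied directly to the fixed sequence $(\pi_\epsilon)$. The paper's route is the more elementary of the two, requiring no conditioning at random times and no diagonal extraction, while your route is more ``dynamical'' in flavor and relies on the full generality of Proposition \ref{p34} (validity for arbitrary sequences of Borel measures on $\mathcal{V}_\star$, equivalently uniformity over starting points in $\mathcal{V}_\star$). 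Both establish the stronger statement that the $\limsup_{\epsilon\to 0}$ already vanishes for each fixed $a_0>0$.
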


\begin{proof}
Since $S^{\epsilon}(\cdot)$ is a generalized inverse of $T^{\epsilon}(\cdot)$,
the set $\{S^{\epsilon}(\tau+a)-S^{\epsilon}(\tau)\ge2a_{0}\}$ is
a subset of
\begin{equation}
\{T^{\epsilon}(S^{\epsilon}(\tau)+2a_{0})-T^{\epsilon}(S^{\epsilon}(\tau))<a\}\;.\label{e350}
\end{equation}
Since $T^{\epsilon}(S^{\epsilon}(\tau)+2a_{0})-T^{\epsilon}(S^{\epsilon}(\tau))$
can be rewritten as
\[
\int_{S^{\epsilon}(\tau)}^{S^{\epsilon}(\tau)+2a_{0}}\chi_{\mathcal{V}_{\star}}(\widehat{\boldsymbol{x}}_{\epsilon}(s))ds=2a_{0}-\int_{S^{\epsilon}(\tau)}^{S^{\epsilon}(\tau)+2a_{0}}\chi_{\Delta}(\widehat{\boldsymbol{x}}_{\epsilon}(s))ds\;,
\]
the set \eqref{e350} is a subset of
\[
\left\{ \int_{S^{\epsilon}(\tau)}^{S^{\epsilon}(\tau)+2a_{0}}\chi_{\Delta}(\widehat{\boldsymbol{x}}_{\epsilon}(s))ds\ge2a_{0}-a\right\} \;.
\]
Therefore, we can replace the probability appeared in the statement
of the lemma with
\[
\mathbb{P}_{\pi_{\epsilon}}^{\epsilon}\left[\int_{S^{\epsilon}(\tau)}^{S^{\epsilon}(\tau)+2a_{0}}\chi_{\Delta}(\widehat{\boldsymbol{x}}_{\epsilon}(s))ds\ge2a_{0}-a\right]\;.
\]
This probability is bounded above by
\begin{equation}
\mathbb{P}_{\pi_{\epsilon}}^{\epsilon}\left[\widehat{\Delta}(2M+2a_{0})\ge2a_{0}-a\right]+\mathbb{P}_{\pi_{\epsilon}}^{\epsilon}\left[S^{\epsilon}(\tau)>2M\right]\;.\label{ke1}
\end{equation}
By Chebyshev's inequality the first term is bounded from above by
\[
\frac{\mathbb{E}_{\pi_{\epsilon}}^{\epsilon}\left[\widehat{\Delta}(2M+2a_{0})\right]}{2a_{0}-a}\;,
\]
and therefore by Proposition \ref{p34} we have
\begin{equation}
\lim_{a_{0}\rightarrow0}\limsup_{\epsilon\rightarrow0}\sup_{\tau\in\mathcal{\mathscr{T}}_{M}}\sup_{a\in(0,\,a_{0})}\mathbb{P}_{\pi_{\epsilon}}^{\epsilon}\left[\widehat{\Delta}(2M+2a_{0})\ge2a_{0}-a\right]=0\;.\label{ke2}
\end{equation}
For the second term of \eqref{ke1}, we observe that $S^{\epsilon}(\tau)>2M$
and $\tau\le M$ imply that $\widehat{\Delta}(2M)\ge M$. Hence again
by Chebyshev's inequality this probability is bounded by $M^{-1}\mathbb{E}_{\pi_{\epsilon}}^{\epsilon}[\widehat{\Delta}(2M)],$
and therefore by Proposition \ref{p34} we have
\begin{equation}
\lim_{a_{0}\rightarrow0}\limsup_{\epsilon\rightarrow0}\sup_{\tau\in\mathcal{\mathscr{T}}_{M}}\sup_{a\in(0,\,a_{0})}\mathbb{P}_{\pi_{\epsilon}}^{\epsilon}\left[S^{\epsilon}(\tau)>2M\right]=0\;.\label{ke3}
\end{equation}
This, \eqref{ke1}, and \eqref{ke2} complete the proof.
\end{proof}
Now we are ready to prove the main tightness result.
\begin{proof}[Proof of Theorem \ref{p31}]
By Aldous' criterion, it suffices to show that, for all $M>0$,
\begin{equation}
\lim_{a_{0}\rightarrow0}\limsup_{\epsilon\rightarrow0}\sup_{\tau\in\mathcal{\mathscr{T}}_{M}}\sup_{a\in(0,\,a_{0})}\mathbb{P}_{\pi_{\epsilon}}^{\epsilon}\left[\mathbf{y}_{\epsilon}(\tau+a)\neq\mathbf{y}_{\epsilon}(\tau)\right]=0\;.\label{e341}
\end{equation}
By Lemma \ref{p35}, it suffices to show that
\[
\lim_{a_{0}\rightarrow0}\limsup_{\epsilon\rightarrow0}\sup_{\tau\in\mathcal{\mathscr{T}}_{M}}\sup_{a\in(0,\,a_{0})}\mathbb{P}_{\pi_{\epsilon}}^{\epsilon}\left[\mathbf{y}_{\epsilon}(\tau+a)\neq\mathbf{y}_{\epsilon}(\tau),\,S^{\epsilon}(\tau+a)-S^{\epsilon}(\tau)\le2a_{0}\right]=0\;.
\]
Since $\mathbf{y}_{\epsilon}(t)=\Psi(\widehat{\boldsymbol{x}}_{\epsilon}(S^{\epsilon}(t)))$,
the last probability can be bounded above by
\[
\mathbb{P}_{\pi_{\epsilon}}^{\epsilon}\left[\Psi(\widehat{\boldsymbol{x}}_{\epsilon}(S^{\epsilon}(\tau)+t)\neq\Psi(\widehat{\boldsymbol{x}}_{\epsilon}(S^{\epsilon}(\tau)))\;\text{for some }t\in(0,\,2a_{0}]\right]\;.
\]
Since $S^{\epsilon}(\tau)$ is a stopping time with respect to the
filtration $\{\mathscr{F}_{t}\}$ by Lemma \ref{lem32}, and since
$\widehat{\boldsymbol{x}}_{\epsilon}(S^{\epsilon}(\tau))\in\mathcal{V}_{\star}$,
the last probability is bounded above by
\[
\sup_{\boldsymbol{y}\in\mathcal{V}_{\star}}\mathbb{P}_{\boldsymbol{y}}^{\epsilon}\left[\Psi(\widehat{\boldsymbol{x}}_{\epsilon}(t))\neq\Psi(\boldsymbol{y})\;\text{for some }t\in(0,\,2a_{0}]\right]=\sup_{i\in S_{\star}}\sup_{\boldsymbol{y}\in\mathcal{V}_{i}}\mathbb{P}_{\boldsymbol{y}}^{\epsilon}\left[H_{\mathcal{V}_{\star}\setminus\mathcal{V}_{i}}\le2a_{0}\theta_{\epsilon}\right]\;.
\]
Thus, the proof of \eqref{e341} is completed by Proposition \ref{p33}.

The assertion $\mathbf{Q}^{*}(\mathbf{x}(0)=i)=1$ is trivial. For
the last assertion of the proposition, it suffices to prove that
\[
\lim_{a_{0}\rightarrow0}\limsup_{\epsilon\rightarrow0}\mathbb{P}_{\pi_{\epsilon}}^{\epsilon}\left[\mathbf{y}_{\epsilon}(t-a)\neq\mathbf{y}_{\epsilon}(t)\text{ for some }a\in(0,\,a_{0})\right]=0\;.
\]
The proof of this estimate is almost identical to that of \eqref{e341}
and is omitted.
\end{proof}

\section{\label{sec6}Proof of Theorem \ref{main}}

We are now ready to prove the main convergence theorem. In view of
the tightness result obtained in Section \ref{sec4}, it is enough
to demonstrate the uniqueness of limit point. The main ingredient
is Theorem \ref{t51}.
\begin{proof}[Proof of Theorem \ref{main}]

Fix $\mathbf{f}\in\mathbb{R}^{S_{\star}}$, and let $\phi_{\epsilon}=\phi_{\epsilon}^{\mathbf{f}}$
be the function obtained in Theorem \ref{t51} for the function $\mathbf{f}$.
Note that the distribution of $\boldsymbol{x}_{\epsilon}(0)$ is concentrated
on a valley $\mathcal{V}_{i}$ for some $i\in S_{\star}$. We fix
$i$ in the proof.

We begin with the observation that
\[
M_{\epsilon}(t)=\phi_{\epsilon}(\widehat{\boldsymbol{x}}_{\epsilon}(t))-\theta_{\epsilon}\int_{0}^{t}(\mathscr{L}_{\epsilon}\phi_{\epsilon})(\widehat{\boldsymbol{x}}_{\epsilon}(s))ds
\]
is a martingale with respect to the filtration $\{\mathscr{F}_{t}\}$
defined in Section \ref{sec4}. Write
\[
M_{\epsilon}(t)=U_{\epsilon}(t)+N_{\epsilon}(t)
\]
where
\begin{align*}
U_{\epsilon}(t) & =\phi_{\epsilon}(\widehat{\boldsymbol{x}}_{\epsilon}(t))-\theta_{\epsilon}\int_{0}^{t}(\mathscr{L}_{\epsilon}\phi_{\epsilon})(\widehat{\boldsymbol{x}}_{\epsilon}(s))\,\mathbf{1}\{\widehat{\boldsymbol{x}}_{\epsilon}(s)\in\mathcal{V}_{\star}\}ds\;,\\
N_{\epsilon}(t) & =-\theta_{\epsilon}\int_{0}^{t}(\mathscr{L}_{\epsilon}\phi_{\epsilon})(\widehat{\boldsymbol{x}}_{\epsilon}(s))\,\mathbf{1}\{\widehat{\boldsymbol{x}}_{\epsilon}(s)\in\Delta\}ds\;.
\end{align*}
Since $\theta_{\epsilon}(\mathscr{L}_{\epsilon}\phi_{\epsilon})(\cdot)$
is bounded function by its construction (cf. \eqref{p321}) and hence
there exists $c>0$ such that
\begin{equation}
|N_{\epsilon}(t)|\le c\widehat{\Delta}(t)\;.\label{ek-1}
\end{equation}
By definition of $\boldsymbol{y}_{\epsilon}(t)$, we can write
\[
U_{\epsilon}(S_{\epsilon}(t))=\phi_{\epsilon}(\boldsymbol{y}_{\epsilon}(t))-\theta_{\epsilon}\int_{0}^{t}(\mathscr{L}_{\epsilon}\phi_{\epsilon})(\boldsymbol{y}_{\epsilon}(s))ds\;,
\]
and hence
\begin{equation}
\widetilde{M}_{\epsilon}(t)=M_{\epsilon}(S_{\epsilon}(t))=\phi_{\epsilon}(\boldsymbol{y}_{\epsilon}(t))-\theta_{\epsilon}\int_{0}^{t}(\mathscr{L}_{\epsilon}\phi_{\epsilon})(\boldsymbol{y}_{\epsilon}(s))ds+N_{\epsilon}(S_{\epsilon}(t))\;.\label{ek0}
\end{equation}
By Lemma \ref{lem32}, $S_{\epsilon}(t)$ is a stopping time with
respect to the filtration $\{\mathscr{F}_{t}\}$, and therefore $\widetilde{M}_{\epsilon}(t)$
is a martingale with respect to $\{\mathscr{G}_{t}\}$. We now investigate
each terms in the expression \eqref{ek0} separately. Recall $\Psi$
from \eqref{proj}. Then, by Theorem \ref{t51}, we can write $\phi_{\epsilon}=\mathbf{f}\circ\Psi+o_{\epsilon}(1)$
on $\mathcal{V}_{\star}$. Since the process $\boldsymbol{y}_{\epsilon}(t)$
takes values in $\mathcal{V}_{\star}$, and by definition $\mathbf{y}_{\epsilon}=\Psi(\boldsymbol{y}_{\epsilon})$,
we have
\begin{equation}
\phi_{\epsilon}(\boldsymbol{y}_{\epsilon}(t))=\mathbf{f}(\Psi(\boldsymbol{y}_{\epsilon}(t)))+o_{\epsilon}(1)=\mathbf{f}(\mathbf{y}_{\epsilon}(t))+o_{\epsilon}(1)\;.\label{ek1}
\end{equation}
Next we consider the second term at the right-hand side of \eqref{ek0}.
Since $\theta_{\epsilon}\mathscr{L}_{\epsilon}\phi_{\epsilon}=(L_{\mathbf{y}}\mathbf{f})\circ\Psi+o_{\epsilon}(1)$
on $\mathcal{V}_{\star}$ by Theorem \ref{t51} and \eqref{aei},
we can write
\begin{equation}
\theta_{\epsilon}\int_{0}^{t}(\mathscr{L}_{\epsilon}\phi_{\epsilon})(\boldsymbol{y}_{\epsilon}(s))ds=\int_{0}^{t}(L_{\mathbf{y}}\mathbf{f})(\mathbf{y}_{\epsilon}(s))ds+o_{\epsilon}(1)\;.\label{ek2}
\end{equation}
Hence, by \eqref{ek0}, \eqref{ek1}, and \eqref{ek2}, we can write
$\widetilde{M}_{\epsilon}(t)$ as
\begin{align}
\widetilde{M}_{\epsilon}(t) & =\mathbf{f}(\mathbf{y}_{\epsilon}(t))-\int_{0}^{t}(L_{\mathbf{y}}\mathbf{f})(\mathbf{y}_{\epsilon}(s))ds+N_{\epsilon}(S_{\epsilon}(t))+o_{\epsilon}(1)\;.\label{ek3}
\end{align}
Recall that $\mathbf{Q}_{\pi_{\epsilon}}^{\epsilon}$ represents the
law of the process $\mathbf{y}_{\epsilon}(\cdot)$ under $\mathbb{P}_{\pi_{\epsilon}}^{\epsilon}$
and let $\mathbf{Q}^{*}$ be a limit point of the family $\{\mathbf{Q}_{\pi_{\epsilon}}^{\epsilon}\}_{\epsilon\in(0,\,1]}$.
Then, by \eqref{ek-1}, \eqref{ek3}, and Proposition \ref{p34},
we can conclude that the process
\[
\widetilde{M}(t)=\mathbf{f}(\mathbf{x}(t))-\int_{0}^{t}(L_{\mathbf{y}}\mathbf{f})(\mathbf{x}(s))ds
\]
is a martingale under $\mathbf{Q}^{*}$. Furthermore, by Theorem \ref{p31},
we have that $\mathbf{Q}^{*}[\mathbf{x}(0)=i]=1$ and $\mathbf{Q}^{*}(\mathbf{x}(t)\neq\mathbf{x}(t-))=0$
for all $t>0$. The only probability measure on $D([0,\,\infty),\,\mathbb{R}^{d})$
satisfying these properties is $\mathbf{Q}_{i}$, and thus we can
conclude that $\mathbf{Q}^{*}=\mathbf{Q}_{i}$. This completes the
characterization of the limit point of the family $\{\mathbf{Q}_{\pi_{\epsilon}}^{\epsilon}\}_{\epsilon\in(0,\,1]}$.
\end{proof}

\appendix
%dummy comment inserted by tex2lyx to ensure that this paragraph is not empty

\section{Negligibility of $\widehat{\Delta}$}

In this appendix, we prove Proposition \ref{p34}. The proof relies
solely on the Freidlin-Wentzell theory, and hence our result is not
restricted to the reversible process \eqref{e13}, but also holds
for the general dynamics \eqref{e12} as well. The verification of
this generality is immediate from a careful reading of our proof.

\begin{figure}
\includegraphics[scale=0.19]{im4}\caption{\label{fig4}Cycle structure associated to $\boldsymbol{m}$: in this
example, $l=3$ so that $a_{3}=H$.}
\end{figure}

\subsection{Notations and idea of proof}

We introduce some additional notations to those in Section \ref{s21}.
Denote by $\mathcal{C}$ the set of critical points of $U$. Let $\eta$
be any sufficiently small number such that
\begin{equation}
\eta<\frac{1}{5}\min\left\{ |U(\boldsymbol{c}')-U(\boldsymbol{c})|:\boldsymbol{c},\,\boldsymbol{c}'\in\mathcal{C}\text{ and }U(\boldsymbol{c}')\neq U(\boldsymbol{c})\right\} \;.\label{mesh}
\end{equation}
In particular, there is no local minima $\boldsymbol{m}$ of $U$
such that $U(\boldsymbol{m})\in(h,\,h+5\eta]$. We write the level
set of $U$ as
\begin{equation}
\mathcal{Q}_{a}:=\{\boldsymbol{x}:U(\boldsymbol{x})<a\}\;\;;\;a\in\mathbb{R}\;.\label{levs}
\end{equation}
For each $\boldsymbol{m}\in\mathcal{M}_{\star}$, define $\mathcal{D}_{\boldsymbol{m}}$
as a connected component of $\mathcal{Q}_{h+\eta}$ containing $\boldsymbol{m}$
and let
\[
\mathcal{D}_{\star}:=\bigcup_{\boldsymbol{m}\in\mathcal{M_{\star}}}\mathcal{D}_{\boldsymbol{m}}\;.
\]
We take $\eta>0$ small enough so that $\mathcal{D}_{\boldsymbol{m}}\subset\mathcal{B}(\boldsymbol{m},\,r_{0})$
(cf. \eqref{ev}) for all $\boldsymbol{m}\in\mathcal{M}_{\star}$.
This implies that $\mathcal{D}_{\star}\subset\mathcal{V}_{\star}$.
From now on we regard $\eta$ as a constant.

\subsubsection*{Strategy of proof}

Define the time spent in the set $\Delta$ (without time-rescaling)
as
\[
\Delta(t)=\Delta_{\epsilon}(t):=\int_{0}^{t}\chi_{\Delta}(\boldsymbol{x}_{\epsilon}(s))\,ds\;.
\]
Then, by a change of variable, we get
\begin{equation}
\widehat{\Delta}(t)=\theta_{\epsilon}^{-1}\Delta(\theta_{\epsilon}t)\;.\label{cov}
\end{equation}
Our main purpose is to estimate $\Delta(t)$ and verify that it is
negligible in the sense of Proposition \ref{p34}. To this end, define
two sequences $(\tau_{i})_{i\in\mathbb{N}}$, $(\sigma_{i})_{i\in\mathbb{N}}$
of hitting times recursively according to the following rules: set
$\tau_{0}=0$, and
\begin{align}
 & \sigma_{i}=\inf\left\{ s>\tau_{i-1}:\boldsymbol{x}_{\epsilon}(s)\in\partial\mathcal{V}_{\star}\right\} \;\;;\;i\ge1\;.\nonumber \\
 & \tau_{i}=\inf\left\{ s>\sigma_{i}:\boldsymbol{x}_{\epsilon}(s)\in\partial\mathcal{D}_{\star}\right\} \;\;;\;i\ge1\;,\label{ap0}
\end{align}
With these notations, we have the following bound on $\Delta(t)$:
\begin{equation}
\Delta(t)\le\sum_{i=1}^{\nu(t)}(\tau_{i}-\sigma_{i})\;,\label{ap1}
\end{equation}
where $\nu(t)=\sup\left\{ n\in\mathbb{N}:\tau_{n}\le t\right\} $.
Hence, for the negligibility of $\Delta(t)$, it suffices to estimate
the term $\tau_{i}-\sigma_{i}$, which measures the length of the
$i$th excursion from $\partial\mathcal{V}_{\star}$ to $\partial\mathcal{D}_{\star}$.
This length is typically short since the drift term $-\nabla U(\boldsymbol{x}_{\epsilon}(t))dt$
pushes the process toward the deeper part of the valley. However,
because of the small random noise, some of these excursions are extraordinarily
long, though such a long excursion is extremely rare. Therefore, in
order to control the right-hand side of \eqref{ap1}, one has to characterize
these long excursions and control both the length and the frequency
of them in a careful manner. This will be carried out in the remaining
part of the appendix.

\subsection{Cyclic structure and Freidlin-Wentzell type estimates}

We introduce a hierarchy structure of the landscape associated to
each global minimum of $U$. Let us fix $\boldsymbol{m}\in\mathcal{M}_{\star}$
throughout this subsection. The constructions below are illustrated
in Figure \ref{fig4}.

For each $a\in\mathbb{R}$, denote by $\mathcal{Q}_{a}(\boldsymbol{m})$
the connected component of the level set $\mathcal{Q}_{a}$ (cf. \eqref{levs})
containing $\boldsymbol{m}$. For $\mathcal{A}\subset\mathbb{R}^{d}$,
we denote by $\mathcal{M}(\mathcal{A})$ the set of local minima of
$U$ contained in $\mathcal{A}$. Then, define an increasing sequence
$(a_{i})_{i=0}^{l+1}$ recursively as follows: set $a_{0}=h+5\eta$
and
\begin{align*}
a_{k+1} & =\inf\left\{ a:\mathcal{M}\left(\mathcal{Q}_{a_{k}}(\boldsymbol{m})\right)\subsetneq\mathcal{M}\left(\mathcal{Q}_{a}(\boldsymbol{m})\right)\right\} \;;\;k\ge0\;.
\end{align*}
If $a_{l}=H$, we stop the recursion procedure and set $a_{l+1}=H+3\eta$.
Now we define
\[
\mathcal{A}_{k}=\mathcal{Q}_{a_{k}-\eta}(\boldsymbol{m})\;\;;\;k\in\llbracket1,\,l+1\rrbracket\;.
\]
By \eqref{mesh}, one can notice that $\mathcal{A}_{k}$ is a connected
set. The sequence of connected sets $\mathcal{A}_{0}\subset\mathcal{A}_{1}\subset\cdots\subset\mathcal{A}_{l+1}$
represents a growing landscape surrounding $\boldsymbol{m}.$ According
to the classical monograph \cite[Chapter 6.6]{FW1}, the set $\mathcal{A}_{k}$
(or $\mathcal{M}(\mathcal{A}_{k})$) corresponds to the rank-$k$
cycle containing $\boldsymbol{m}$. We shall classify each excursions
in \eqref{ap1} by the maximum $k$ such that the corresponding trajectory
hit $\partial\mathcal{A}_{k}$ before arriving at a point in $\partial\mathcal{D}_{\star}$.
Hitting $\partial\mathcal{A}_{k}$ for large $k$ means that we may
have a long excursion.

We define a sequence $(J_{k})_{k=0}^{l+1}$ as
\[
J_{k}=a_{k}-h-5\eta\;.
\]
With the notations introduced above, we are ready to recall several
classical results from \cite{FW1}.
\begin{thm}
\label{FW}There exists $\epsilon_{0}$ such that for all $\epsilon\in(0,\,\epsilon_{0})$
and the followings hold.
\begin{align}
 & \sup_{\boldsymbol{x}\in\mathcal{A}_{k}\setminus\overline{\mathcal{D}_{\star}}}\mathbb{E}_{\boldsymbol{x}}^{\epsilon}\left[H_{\partial\mathcal{A}_{k}\cup\partial\mathcal{D}_{\star}}\right]<\exp\frac{J_{k-1}+\eta}{\epsilon}\;\;\text{for all }k\in\llbracket1,\,l+1\rrbracket\;,\label{fw1}\\
 & \sup_{\boldsymbol{x}\in\mathcal{D}_{\star}}\mathbb{P}_{\boldsymbol{x}}^{\epsilon}\biggl[H_{\partial\mathcal{A}_{k}}<\exp\frac{J_{k}+3\eta}{\epsilon}\biggr]<\frac{1}{64}\;\;\text{for all }k\in\llbracket1,\,l+1\rrbracket\;,\text{ and}\label{fw2}\\
 & \sup_{\boldsymbol{x}\in\partial\mathcal{Q}_{H+\eta}}\mathbb{P}_{\boldsymbol{x}}^{\epsilon}\left[H_{\partial\mathcal{A}_{l+1}}<H_{\partial\mathcal{D}_{\star}}\right]<\frac{1}{8}\;.\label{fw3}
\end{align}
\end{thm}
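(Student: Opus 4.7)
The plan is to deduce all three estimates from classical Freidlin--Wentzell cycle and exit-time estimates (cf.\ \cite[Chapter~6]{FW1}) applied to our hierarchy $(\mathcal{A}_k)$. First I would record the structural features of the hierarchy: by the definition of $(a_k)$, no new local minimum is introduced in the slab $[a_{k-1}, a_k)$, hence $\mathcal{M}(\mathcal{A}_k) = \mathcal{M}(\mathcal{Q}_{a_{k-1}}(\boldsymbol{m}))$, so every $\boldsymbol{m}' \in \mathcal{M}(\mathcal{A}_k)$ is joined to $\boldsymbol{m}$ by a continuous path inside $\mathcal{A}_k$ of $U$-height strictly less than $a_{k-1}$; moreover, by the mesh condition \eqref{mesh}, every non-global local minimum satisfies $U(\boldsymbol{m}') > h + 5\eta$. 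Together, these identify $\mathcal{A}_k \setminus \overline{\mathcal{D}_\star}$ as a disjoint union of FW-cycles whose depths are all at most $a_{k-1} - (h + 5\eta) = J_{k-1}$.

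For \eqref{fw1}, I would apply the FW upper bound for the mean exit time from a cycle of depth $\le J_{k-1}$. For any starting point $\boldsymbol{x} \in \mathcal{A}_k \setminus \overline{\mathcal{D}_\star}$, a short deterministic flow carries $\boldsymbol{x}_\epsilon$ in $O(1)$ time to a neighborhood of some $\boldsymbol{m}' \in \mathcal{M}(\mathcal{A}_k)$. If $\boldsymbol{m}' \in \mathcal{M}_\star$, the trajectory has entered $\mathcal{D}_\star$ and $H_{\partial\mathcal{A}_k \cup \partial\mathcal{D}_\star}$ has already elapsed; otherwise, the expected escape time from the cycle containing $\boldsymbol{m}'$ is bounded by $\exp((J_{k-1} + \eta)/\epsilon)$ for $\epsilon$ small, by the standard FW cycle-exit upper bound. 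A finite induction over the (finitely many) non-global wells in $\mathcal{A}_k$, combined with the strong Markov property, propagates the estimate uniformly to all starting points.

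For \eqref{fw2} I would use the complementary FW lower bound on the first-passage time to a high level set. Any continuous path from $\boldsymbol{x} \in \mathcal{D}_\star \subset \{U < h + \eta\}$ to $\partial\mathcal{A}_k \subset \{U = a_k - \eta\}$ must cross the level $a_k - \eta$, so its FW action exceeds $(a_k - \eta) - (h + \eta) = J_k + 3\eta$. The FW lower-bound estimate converts this into the probability bound, uniformly on compact subsets of $\mathcal{D}_\star$; to cover starting points near $\partial\mathcal{D}_\star$, I would first run the flow briefly inward using the deterministic drift to reach a uniform cushion below level $h + \eta$ and then apply the lower bound.

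Finally, \eqref{fw3} is a first-exit direction statement. For $\boldsymbol{x} \in \{U = H + \eta\}$, reaching $\partial\mathcal{A}_{l+1} \subset \{U = H + 2\eta\}$ requires an uphill deviation of quasi-potential $\eta$ against the gradient drift, whereas reaching $\partial\mathcal{D}_\star \subset \{U = h + \eta\}$ is compatible with the drift and is achieved in deterministic time of order one (up to a small random correction). The FW upper bound on the probability of a sample path climbing by $\eta$ against the drift within this time window is exponentially small in $1/\epsilon$, hence below $1/8$ for $\epsilon < \epsilon_0$. The main obstacle across the three estimates is less conceptual than technical: ensuring that the classical uniform-in-initial-condition FW bounds match the specific sets $\mathcal{A}_k, \mathcal{D}_\star$ and the explicit numerical constants ($1/64$, $1/8$), which is handled by exploiting the strict gaps built into \eqref{mesh} and the construction of the $a_k$.
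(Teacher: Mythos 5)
Your proposal for \eqref{fw1} and \eqref{fw2} follows essentially the same route as the paper: identify the cycles contained in $\mathcal{A}_k\setminus\overline{\mathcal{D}_\star}$ as having FW-depth at most $J_{k-1}$, apply the Freidlin--Wentzell upper bound on mean exit times for \eqref{fw1}, and apply the lower bound on exit times from the depth-$(J_k+4\eta)$ cycle $\mathcal{A}_k$ for \eqref{fw2}. Your version of \eqref{fw2} differs slightly in that you compute the cost from the starting level $h+\eta$ instead of from the bottom of the cycle, obtaining exactly $J_k+3\eta$ with no slack; you correctly notice this and patch it with the ``flow inward to get a cushion'' step, but the paper's use of the cycle depth $J_k+4\eta$ (i.e., measuring from the global minimum, which is what governs exit times independently of the starting point in the cycle) is cleaner and avoids the patch entirely.

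There is, however, a genuine gap in your argument for \eqref{fw3}. You assert that from $\boldsymbol{x}\in\partial\mathcal{Q}_{H+\eta}$ the process reaches $\partial\mathcal{D}_\star$ ``in deterministic time of order one (up to a small random correction),'' and then bound the probability of an $\eta$-uphill climb over this $O(1)$ window. But the gradient flow from a point on $\{U=H+\eta\}$ need not descend into a global well: if $S\neq S_\star$, the level set $\{U=H+\eta\}$ passes over the basins of attraction of the non-global minima, and a typical starting point flows down to some $\boldsymbol{m}'$ with $U(\boldsymbol{m}')>h+5\eta$. From there the process is trapped for a time of order $\exp\{(H-U(\boldsymbol{m}'))/\epsilon\}$, which is exponentially long and makes your $O(1)$ time-window bound inapplicable; over an exponential time scale the probability of climbing by $\eta$ is not small. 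The correct mechanism, which is what \cite[Theorem 5.1 in Chapter 6]{FW1} (cited by the paper) encodes, is a comparison of exit quasipotentials from each intermediate metastable state: from any non-global well, escaping past a saddle at level $H$ (cost $H-U(\boldsymbol{m}')$) is cheaper by $2\eta$ than climbing to level $H+2\eta$ (cost $H+2\eta-U(\boldsymbol{m}')$), so the process cascades through the saddle structure towards a global well rather than escaping out the top; one then controls the (bounded, as $\epsilon\to 0$) expected number of such inter-well transitions. Your sketch skips the trapping and cascading stages entirely and so does not justify the claimed bound.
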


\begin{rem}
Of course, we can replace constants $1/64$ and $1/8$ appeared in
the statement of theorem with any small positive number. From now
on, $\epsilon_{0}$ always denotes the constant that appeared in this
theorem.
\end{rem}

\begin{proof}
All of these results are consequence of well-known Freidlin-Wentzell
theory. The bound \eqref{fw1} follows from \cite[Theorem 5.3 in Chapter 6]{FW1}
since the deepest possible depth of a valley in $\mathcal{A}_{k+1}$,
which does not contain a global minimum of $\mathcal{M}$ is at most
$J_{k-1}$ by \eqref{mesh}. The bound \eqref{fw2} is a consequence
of \cite[Theorem 6.2 in Chapter 6]{FW1}, since the depth of $\mathcal{A}_{k}$
is $(a_{k}-\eta)-h=J_{k}+4\eta$. Finally, \eqref{fw3} can be deduced
from \cite[Theorem 5.1 in Chapter 6]{FW1}.
\end{proof}
We next present some exponential-type tail estimates that are consequences
of Theorem \ref{FW}. We acknowledge that these estimates are inspired
by \cite[Lemmas B.1 and B.2]{MS}. For the simplicity of notation
we write
\[
\rho_{k}=\exp\biggl(-\frac{J_{k}+2\eta}{\epsilon}\biggr)\;,\ \text{for }k\in\llbracket0,\,\ell\rrbracket\;.
\]

\begin{lem}
\label{pe1}There exists a constant $c_{0}>0$ such that for all $\epsilon\in(0,\,\epsilon_{0})$,
we have
\begin{align}
 & \sup_{\boldsymbol{x}\in\mathcal{A}_{k}\setminus\overline{\mathcal{D}_{\star}}}\mathbb{E}_{\boldsymbol{x}}^{\epsilon}\exp\left(c_{0}\rho_{k-1}H_{\partial\mathcal{A}_{k}\cup\partial\mathcal{D}_{\star}}\right)<2\;\;\text{\ensuremath{\forall}}k\in\llbracket1,\,l+1\rrbracket\text{ and}\label{fw4}\\
 & \sup_{\boldsymbol{x}\in\partial\mathcal{A}_{l+1}}\mathbb{E}_{\boldsymbol{x}}^{\epsilon}\exp\left(c_{0}\rho_{l}H_{\partial\mathcal{D}_{\star}}\right)<4\;.\label{fw5}
\end{align}
\end{lem}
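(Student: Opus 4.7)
The plan is to derive both exponential-moment bounds from the first-moment and probability estimates of Theorem~\ref{FW} through the same two-step scheme: use Markov's inequality to convert \eqref{fw1}--\eqref{fw3} into geometric tail bounds via strong Markov iteration, then select $c_{0}$ small enough that the exponential rate $c_{0}\rho_{k-1}$ cannot undo the geometric decay.

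For \eqref{fw4}, I set $M_{k}:=2\exp((J_{k-1}+\eta)/\epsilon)$. Markov's inequality applied to \eqref{fw1} gives $\mathbb{P}_{\boldsymbol{x}}^{\epsilon}[H_{\partial\mathcal{A}_{k}\cup\partial\mathcal{D}_{\star}}>M_{k}]\le 1/2$ uniformly in $\boldsymbol{x}\in\mathcal{A}_{k}\setminus\overline{\mathcal{D}_{\star}}$. On the event $\{H_{\partial\mathcal{A}_{k}\cup\partial\mathcal{D}_{\star}}>nM_{k}\}$ the position $\boldsymbol{x}_{\epsilon}(nM_{k})$ still lies in $\mathcal{A}_{k}\setminus\overline{\mathcal{D}_{\star}}$, so iterating the previous bound via the strong Markov property produces $\mathbb{P}_{\boldsymbol{x}}^{\epsilon}[H_{\partial\mathcal{A}_{k}\cup\partial\mathcal{D}_{\star}}>nM_{k}]\le 2^{-n}$ for all $n\ge 0$. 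Since $\rho_{k-1}M_{k}=2e^{-\eta/\epsilon}\le 2e^{-\eta/\epsilon_{0}}$ for $\epsilon\in(0,\epsilon_{0})$, choosing $c_{0}$ so small that $2c_{0}e^{-\eta/\epsilon_{0}}<\log(4/3)$ guarantees $cM_{k}<\log(4/3)$ for $c:=c_{0}\rho_{k-1}$. The standard identity $\mathbb{E}[e^{cT}]=1+c\int_{0}^{\infty}e^{ct}\mathbb{P}(T>t)\,dt$ combined with the geometric tail estimate then yields
\[
\mathbb{E}_{\boldsymbol{x}}^{\epsilon}\bigl[e^{cH_{\partial\mathcal{A}_{k}\cup\partial\mathcal{D}_{\star}}}\bigr]\le 1+\frac{e^{cM_{k}}-1}{1-e^{cM_{k}}/2}<2.
\]

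For \eqref{fw5}, I combine \eqref{fw1} at $k=l+1$ with \eqref{fw3} to produce an analogous tail bound starting from $\partial\mathcal{A}_{l+1}$. Define stopping times $\tau_{0}=0$, $\sigma_{n}=\inf\{t\ge\tau_{n-1}:\boldsymbol{x}_{\epsilon}(t)\in\partial\mathcal{Q}_{H+\eta}\}$, and $\tau_{n}=\inf\{t\ge\sigma_{n}:\boldsymbol{x}_{\epsilon}(t)\in\partial\mathcal{A}_{l+1}\cup\partial\mathcal{D}_{\star}\}$. By \eqref{fw3} applied via the strong Markov property at each $\sigma_{n}$, a given round terminates at $\partial\mathcal{D}_{\star}$ with probability at least $7/8$, so the number of rounds before success is stochastically dominated by a geometric random variable with parameter $7/8$. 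The Markov-iteration argument above, applied with \eqref{fw1} for $k=l+1$, controls the duration of each round, yielding $\sup_{\boldsymbol{x}\in\partial\mathcal{A}_{l+1}}\mathbb{P}_{\boldsymbol{x}}^{\epsilon}[H_{\partial\mathcal{D}_{\star}}>M']\le q$ for some $q<1$ and $M'$ of order $\exp((J_{l}+\eta)/\epsilon)$. Converting to an exponential moment as in \eqref{fw4} then yields the bound $<4$, possibly after shrinking $c_{0}$ once more; the factor $4$ (rather than $2$) accommodates the slower geometric decay caused by the failure probability in \eqref{fw3}.

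The main obstacle is the opening step of the proof of \eqref{fw5}, namely bounding the first round duration when the starting point $\boldsymbol{x}$ lies on the boundary $\partial\mathcal{A}_{l+1}$, because \eqref{fw1} is stated only for points in the open set $\mathcal{A}_{l+1}\setminus\overline{\mathcal{D}_{\star}}$. This is handled either by invoking the continuity of hitting-time functionals for the non-degenerate diffusion \eqref{e13} and taking limits from interior points, or by applying Theorem~\ref{FW} to a slightly larger cycle containing $\overline{\mathcal{A}_{l+1}}$ in its interior; any resulting auxiliary constant is absorbed into the final choice of $c_{0}$. Once this technicality is dispatched, the rest of the proof is a routine strong Markov iteration.
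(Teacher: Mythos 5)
Your treatment of \eqref{fw4} is correct and essentially the same as the paper's: both pass from the first-moment bound \eqref{fw1} to a geometric tail via Markov's inequality and strong-Markov iteration, then integrate by parts to get the exponential moment. The only cosmetic difference is that the paper works with the normalized time $\rho_{k-1}H$ and the recursion $f(n)\le f(1)^n$ while you work with blocks of length $M_k$; these are the same argument.

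Your treatment of \eqref{fw5}, however, has a genuine gap, and the ``main obstacle'' you flag (starting from the boundary $\partial\mathcal{A}_{l+1}$ rather than the interior) is not the real problem. The real problem is the excursion piece $\sigma_{n}-\tau_{n-1}$: the time to travel from $\partial\mathcal{A}_{l+1}$ (on the level set $\{U=H+2\eta\}$) back down to $\partial\mathcal{Q}_{H+\eta}$ (on the level set $\{U=H+\eta\}$). Your round decomposition
$\tau_n-\tau_{n-1}=(\sigma_n-\tau_{n-1})+(\tau_n-\sigma_n)$ has two pieces, and \eqref{fw1} at $k=l+1$ controls only the second, $\tau_n-\sigma_n$, because \eqref{fw1} bounds expectations of $H_{\partial\mathcal{A}_{l+1}\cup\partial\mathcal{D}_\star}$ for starting points \emph{inside} $\mathcal{A}_{l+1}$. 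For the first piece the starting point $\boldsymbol{x}_\epsilon(\tau_{n-1})$ lies \emph{on} $\partial\mathcal{A}_{l+1}$, where the hitting time $H_{\partial\mathcal{A}_{l+1}\cup\partial\mathcal{D}_\star}$ is identically zero, so \eqref{fw1} is vacuous; and neither continuity arguments nor a slightly larger cycle fix this, because (i) continuity of hitting times toward $\partial\mathcal{A}_{l+1}$ from the inside gives no information about the time to reach $\partial\mathcal{Q}_{H+\eta}$, and (ii) the expectation bound for a larger cycle would be of order $\exp((J_{l+1}+\eta)/\epsilon)=\exp((J_l+4\eta)/\epsilon)$, so that $\rho_{l}\cdot\exp((J_l+4\eta)/\epsilon)=\exp(2\eta/\epsilon)\to\infty$, and no fixed $c_0>0$ survives.

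What closes the gap in the paper is an additional estimate, absent from Theorem~\ref{FW}: equation \eqref{ss1}, the exponential tail bound
$\sup_{\boldsymbol{x}\in\partial\mathcal{A}_{l+1}}\mathbb{P}_{\boldsymbol{x}}^{\epsilon}[H_{\partial\mathcal{Q}_{H+\eta}}>t]<e^{-ct/\epsilon}$,
which is deduced from Schilder's large deviation theorem together with the lower bound \eqref{pee1} on the rate functional for paths that remain outside $\mathcal{Q}_{H+\eta}$ (valid because $U$ has no critical points there). With \eqref{ss1} controlling $\sigma_n-\tau_{n-1}$, \eqref{fw4} at $k=l+1$ controlling $\tau_n-\sigma_n$, and \eqref{fw3} controlling the number of rounds $N$ via \eqref{pem1}, the paper assembles the exponential moment through a H\"older split \eqref{epm0}. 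Your proposal needs to supply some analogue of \eqref{ss1} before the round-based argument for \eqref{fw5} can go through.
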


\begin{proof}
For \eqref{fw4}, it suffices to prove that there exists $c>0$ such
that

\begin{equation}
\sup_{\boldsymbol{x}\in\mathcal{A}_{k}\setminus\overline{\mathcal{D}_{\star}}}\mathbb{P}_{\boldsymbol{x}}^{\epsilon}\left[\rho_{k-1}H_{\partial\mathcal{A}_{k}\cup\partial\mathcal{D}_{\star}}>t\right]<\exp\left(-\frac{c}{\epsilon}t\right)\label{pe11}
\end{equation}
for all $t>0$ and for all $\epsilon\in(0,\,\epsilon_{0})$. Write
the left-hand side of the previous inequality as $f(t)$. Then, by
the strong Markov property, Chebyshev's inequality, and \eqref{fw1},
one can deduce that, for $n\in\mathbb{N}$,
\[
f(n)\le f(1)^{n}\le\sup_{\boldsymbol{x}\in\mathcal{A}_{k}\setminus\overline{\mathcal{D}_{\star}}}\left(\rho_{k-1}\mathbb{E}_{\boldsymbol{x}}^{\epsilon}H_{\partial\mathcal{A}_{k}\cup\partial\mathcal{D}_{\star}}\right)^{n}\le\exp\left(-\frac{\eta}{\epsilon}n\right)
\]
provided that $\epsilon$ is sufficiently small. This completes the
proof of \eqref{fw4}.

For \eqref{fw5}, we first claim that there exists $c>0$ such that
\begin{equation}
\sup_{\boldsymbol{x}\in\partial\mathcal{A}_{l+1}}\mathbb{P}_{\boldsymbol{x}}^{\epsilon}\left[H_{\partial\mathcal{Q}_{H+\eta}}>t\right]<\exp\left(-\frac{ct}{\epsilon}\right)\;\;\text{for all }\epsilon\in(0,\,\epsilon_{0})\;.\label{ss1}
\end{equation}
The proof is identical to \cite[Proof of Lemma B.2]{MS} and we will
omit the detail. The main ingredient of the proof therein is the fact
that for any trajectory $\phi:[0,\,t]\rightarrow\mathbb{R}^{d}$ such
that $\phi(s)\in\mathcal{Q}_{H+\eta}^{c}$ for all $s\in[0,\,t]$
must satisfy
\begin{equation}
\int_{0}^{t}|\dot{\phi}(s)+\nabla U(\phi(s))|^{2}ds\ge ct\label{pee1}
\end{equation}
for some $c>0$. This follows mainly because there is no critical
point of $U$ in $\mathcal{Q}_{H+\eta}^{c}$. Then, \eqref{ss1} is
immediate from \eqref{pee1} through Schilder's classical large deviation
theorem.

Now we define two sequences of hitting times $(\pi_{i})_{i=0}^{\infty}$,
$(\zeta_{i})_{i=1}^{\infty}$ recursively as, $\pi_{0}=0$ and
\begin{align*}
 & \zeta_{i}=\inf\left\{ s>\pi_{i-1}:\boldsymbol{x}_{\epsilon}(s)\in\partial\mathcal{Q}_{H+\eta}\right\} \;\;;\;i\ge1\;,\\
 & \pi_{i}=\inf\left\{ s>\zeta_{i}:\boldsymbol{x}_{\epsilon}(s)\in\partial\mathcal{A}_{l+1}\text{ or }\partial\mathcal{D}_{\star}\right\} \;\;;\;i\ge1\;.
\end{align*}
Let $N=\inf\{n:\boldsymbol{x}_{\epsilon}(\pi_{n})\in\partial\mathcal{D}_{\star}\}$.
Then, we can write
\begin{equation}
H_{\partial\mathcal{D}_{\star}}=\pi_{N}=\sum_{i=1}^{N}(\zeta_{i}-\pi_{i-1})+\sum_{i=0}^{N}(\pi_{i}-\zeta_{i})\;.\label{pem2}
\end{equation}
Then, by H\"older's inequality,
\begin{equation}
\begin{aligned} & \mathbb{E}_{\boldsymbol{x}}^{\epsilon}\exp\left(c\rho_{l}H_{\partial\mathcal{D}_{\star}}\right)\\
 & =\sum_{n=1}^{\infty}\mathbb{E}_{\boldsymbol{x}}^{\epsilon}\biggl[\exp\biggl\{ c\rho_{l}\biggl(\sum_{i=1}^{N}(\zeta_{i}-\pi_{i-1})+\sum_{i=0}^{N}(\pi_{i}-\zeta_{i})\biggr)\biggr\}\mathbf{1}\{N=n\}\biggr]\\
 & \le\sum_{n=1}^{\infty}\left[\mathbb{E}_{\boldsymbol{x}}^{\epsilon}\exp\biggl\{3c\rho_{l}\sum_{i=0}^{n}(\pi_{i}-\zeta_{i})\biggr\}\right]^{\frac{1}{3}}\left[\mathbb{E}_{\boldsymbol{x}}^{\epsilon}\exp\biggl\{3c\rho_{l}\sum_{i=0}^{n}(\zeta_{i}-\pi_{i-1})\biggr\}\right]^{\frac{1}{3}}\mathbb{P}_{\boldsymbol{x}}^{\epsilon}\left[N=n\right]^{\frac{1}{3}}\;.
\end{aligned}
\label{epm0}
\end{equation}
Now we consider the terms appeared in the last line separately. By
the strong Markov property, \eqref{ss1} and the first part of the
current lemma with $k=l+1$, we get
\begin{align}
 & \mathbb{E}_{\boldsymbol{x}}^{\epsilon}\exp\Big(3c\rho_{l}\sum_{i=0}^{n}(\zeta_{i}-\pi_{i-1})\Big)\le\sup_{\boldsymbol{y}\in\partial\mathcal{A}_{l+1}}\biggl[\mathbb{E}_{\boldsymbol{x}}^{\epsilon}\exp\Big(3c\rho_{l}H_{\partial\mathcal{Q}_{H+\eta}}\Big)\biggr]^{n}<2^{n}\;\;\mbox{and}\label{pem3}\\
 & \mathbb{E}_{\boldsymbol{x}}^{\epsilon}\exp\Big(\frac{c\rho_{l}}{3}\sum_{i=0}^{n}(\pi_{i}-\zeta_{i})\Big)\le\sup_{\boldsymbol{y}\in\partial\mathcal{Q}_{H+\eta}}\biggl[\mathbb{E}_{\boldsymbol{x}}^{\epsilon}\exp\Big(\frac{c\rho_{l}}{3}H_{\partial\mathcal{A}_{l+1}\cup\partial\mathcal{D}_{\star}}\Big)\biggr]^{n}<2^{n}\;,\label{pem4}
\end{align}
for all small enough $c$ and $\epsilon\in(0,\,\epsilon_{0})$. On
the other hand, the strong Markov property and \eqref{fw3} implies
that
\begin{equation}
\sup_{\boldsymbol{x}\in\partial\mathcal{A}_{l+1}}\mathbb{P}_{\boldsymbol{x}}^{\epsilon}\left[N=n\right]<\frac{1}{8^{n-1}}\;\;;\;n\ge1\;.\label{pem1}
\end{equation}
Now applying \eqref{pem3}, \eqref{pem4}, and \eqref{pem1} to \eqref{epm0}
finally yields
\[
\sup_{\boldsymbol{x}\in\partial\mathcal{A}_{l+1}}\mathbb{E}_{\boldsymbol{x}}^{\epsilon}\exp\left(c\rho_{l}H_{\partial\mathcal{D}_{\star}}\right)<\sum_{n=1}^{\infty}4^{\frac{n}{3}}\frac{1}{8^{\frac{n-1}{3}}}\le4\;.
\]
\end{proof}

\subsection{Proof of Proposition \ref{p34}}

The main ingredient to prove Proposition \ref{p34} is the following
exponential tail estimate for $\Delta(t)$.
\begin{lem}
\label{kc}For any $\upsilon\in(0,\,1)$, there exist constants $C_{1}$,
$C_{2}$, $\epsilon_{1}(\upsilon)>0$ such that,
\begin{equation}
\sup_{\boldsymbol{x}\in\mathcal{V}_{\star}}\mathbb{P}_{\boldsymbol{x}}^{\epsilon}\left[\Delta(t)>\alpha t\right]\le C_{1}\exp\left\{ -C_{2}(\alpha-\upsilon)\rho_{l}t\right\} \;,\label{mest}
\end{equation}
for all $\alpha\in(\upsilon,\,1)$, $\epsilon\in(0,\,\epsilon_{1}(\upsilon))$,
and $t>0$.
\end{lem}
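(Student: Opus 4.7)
The plan is to use a Chernoff-Cramér inequality, controlling $\mathbb{E}_{\boldsymbol{x}}^{\epsilon}[e^{c\rho_{l}\Delta(t)}]$ by iterating the strong Markov property at the excursion-start times $\sigma_{i}$, and controlling the number of excursions in $[0,t]$ through a lower bound on the waiting intervals between consecutive excursions.

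First I would observe that $\mathcal{V}_{\star}\subset\mathcal{A}_{l+1}$ (since $\overline{\Omega}$ is connected and $a_{l}=H$), and then obtain a uniform single-excursion bound
\[
\sup_{\boldsymbol{y}\in\mathcal{A}_{l+1}\setminus\overline{\mathcal{D}_{\star}}}\mathbb{E}_{\boldsymbol{y}}^{\epsilon}\bigl[\exp(c_{0}\rho_{l}H_{\partial\mathcal{D}_{\star}})\bigr]\le 8
\]
by combining \eqref{fw4} for $k=l+1$ with \eqref{fw5} via the strong Markov property applied at the stopping time $H_{\partial\mathcal{A}_{l+1}\cup\partial\mathcal{D}_{\star}}$: after this stopping time the process is either in $\partial\mathcal{D}_{\star}$ (nothing more to do) or in $\partial\mathcal{A}_{l+1}$ (apply \eqref{fw5} to come back). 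Consequently, by strong Markov at $\sigma_{i}$, each excursion length $\xi_{i}=\tau_{i}-\sigma_{i}$ satisfies $\mathbb{E}^{\epsilon}[e^{c_{0}\rho_{l}\xi_{i}}\mid\mathscr{F}_{\sigma_{i}}]\le 8$ uniformly.

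Next, using the decomposition $\Delta(t)\le\sum_{i:\sigma_{i}\le t}\xi_{i}$ from \eqref{ap1} and splitting according to the number $N(t)=|\{i:\sigma_{i}\le t\}|$ of excursions initiated by time $t$,
\[
\mathbb{P}_{\boldsymbol{x}}^{\epsilon}[\Delta(t)>\alpha t]\le\mathbb{P}_{\boldsymbol{x}}^{\epsilon}[N(t)>N^{\ast}]+\mathbb{P}_{\boldsymbol{x}}^{\epsilon}\Bigl[\sum_{i=1}^{N^{\ast}}\xi_{i}>\alpha t\Bigr],
\]
with threshold $N^{\ast}=\lfloor(\alpha-\upsilon)c_{0}\rho_{l}t/(2\log 8)\rfloor$. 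The second term, via Chernoff with parameter $c_{0}\rho_{l}$ together with the iterated strong Markov bound $\mathbb{E}^{\epsilon}[e^{c_{0}\rho_{l}\sum_{i=1}^{N^{\ast}}\xi_{i}}]\le 8^{N^{\ast}}$, is bounded by $\exp(-c_{0}\rho_{l}\alpha t+N^{\ast}\log 8)\le\exp(-c_{0}\rho_{l}(\alpha+\upsilon)t/2)$, which is of the required form.

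For the first term, each waiting interval $W_{i}=\sigma_{i}-\tau_{i-1}$ is an exit time from $\mathcal{V}_{\star}$ starting in $\partial\mathcal{D}_{\star}$; classical Freidlin--Wentzell lower bounds (in the spirit of \eqref{fw2}) give $\mathbb{P}^{\epsilon}[W_{i}<T_{w}]\le 1/2$, where $T_{w}=e^{\gamma/\epsilon}$ for some $\gamma>0$ depending on $r_{0}$ and $\eta$. Choosing the neighbourhood parameters $r_{0},\eta$ (consistent with Section \ref{s213}) so that $\gamma>H-h-3\eta$, and hence $\rho_{l}T_{w}\to\infty$ as $\epsilon\to 0$, a standard Bernoulli Chernoff on $|\{i\le N^{\ast}:W_{i}\ge T_{w}\}|$ shows $\mathbb{P}_{\boldsymbol{x}}^{\epsilon}[N(t)>N^{\ast}]\le C\exp(-c'(\alpha-\upsilon)\rho_{l}t)$ once $\epsilon$ is small enough that $\rho_{l}T_{w}>2\log 8/(c_{0}(\alpha-\upsilon))$.

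The main obstacle is the third step: calibrating $N^{\ast}$ so that both error terms decay at the common rate $(\alpha-\upsilon)\rho_{l}t$, uniformly in $\boldsymbol{x}\in\mathcal{V}_{\star}$ and $\alpha\in(\upsilon,1)$. This uniformity requires propagating the waiting-time lower bound from $\partial\mathcal{D}_{\star}$ to arbitrary starting points in $\mathcal{V}_{\star}$, which the hierarchy $\mathcal{A}_{1}\subsetneq\cdots\subsetneq\mathcal{A}_{l+1}$ underlying Lemma \ref{pe1}, rather than just the outermost cycle, makes available.
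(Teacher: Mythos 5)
There is a genuine gap in the step controlling $\mathbb{P}_{\boldsymbol{x}}^\epsilon[N(t) > N^*]$, and it is not the uniformity issue you flag at the end --- the waiting-time lower bound you invoke is simply too weak. Your calibration requires $\rho_l T_w\to\infty$, i.e.\ $\gamma > H - h - 3\eta$, since $\rho_l = e^{-(H-h-3\eta)/\epsilon}$. But $W_i = \sigma_i - \tau_{i-1}$ is the exit time from $\mathcal{V}_\star$ starting on $\partial\mathcal{D}_\star$, and its Freidlin--Wentzell exponent $\gamma$ is governed by $\min_{\partial\mathcal{V}_\star} U - h$. The set $\mathcal{V}_\star$ is a union of \emph{balls} $\mathcal{B}(\boldsymbol{m},r_0)$, not a level set, and Section \ref{s213} requires $r_0$ to be small enough that each such ball contains no other critical points and lies inside $\{U < H-a\}$. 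Hence $\min_{\partial\mathcal{V}_\star}U - h$ is at most $H-a-h$ and, for small $r_0$, is of order $r_0^2$; it is generically far below $H-h-3\eta$, and there is no freedom to rig $r_0$ and $\eta$ to change this: $r_0$ and $a$ are fixed in Section \ref{s213} independently of the appendix parameter $\eta$, which \eqref{mesh} drives down. Consequently $\rho_l T_w\to 0$, the typical number of excursions $N(t)\approx t/T_w$ vastly exceeds your threshold $N^*\approx\rho_l t$, and $\mathbb{P}_{\boldsymbol{x}}^\epsilon[N(t)>N^*]$ does not vanish.

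The underlying reason the single-rate scheme cannot work is that the bound $\mathbb{E}[e^{c_0\rho_l\xi_i}]\le 8$ is applied to \emph{every} excursion, including the overwhelmingly many shallow ones whose durations are negligible on the scale $\rho_l^{-1}$; this is hopelessly lossy once $N(t)\gg\rho_l t$. The paper's proof avoids this by refusing to treat an excursion as a single block: it splits $[\sigma_i,\tau_i]$ into stages $\tau_i^{(0)}\le\cdots\le\tau_i^{(l+2)}$ tracking first visits to the nested cycle boundaries $\partial\mathcal{A}_k$, and estimates the total stage-$k$ time $\Delta^{(k)}(t)$ separately in Lemma \ref{kcs}. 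The crucial ingredient you do not use is \eqref{fw2}, which shows that reaching $\partial\mathcal{A}_k$ on a time scale short compared with $\rho_k^{-1}$ is unlikely starting from $\mathcal{D}_\star$; this bounds the number of excursions that actually contribute to $\Delta^{(k)}$ by roughly $\rho_k t$ (not by the far larger total excursion count), and this frequency bound is calibrated to match exactly the rate $\rho_k$ in the stage-duration tail estimates \eqref{fw4}--\eqref{fw5}. Your scheme operates only at the single coarsest rate $\rho_l$ and tries to count all excursions against a uniform waiting-time floor, so it has no access to this level-by-level cancellation between rarity and duration, and the hierarchy you gesture at in the last paragraph cannot rescue it without essentially rebuilding the paper's decomposition.
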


Before proving this proposition, we show how it implies Proposition
\ref{p34}.
\begin{proof}[Proof of Proposition \ref{p34}]
Fix $\upsilon\in(0,\,1)$. Then, by Lemma \ref{kc}, for all $\epsilon\in(0,\,\epsilon_{1}(\upsilon))$,
$\boldsymbol{x}\in\mathcal{V}_{\star}$ and $t>0$, we obtain
\[
\mathbb{E}_{\boldsymbol{x}}^{\epsilon}\left[\frac{\Delta(t)}{t}\right]=\int_{0}^{1}\mathbb{P}_{\boldsymbol{x}}^{\epsilon}\left[\frac{\Delta(t)}{t}>\alpha\right]d\alpha\le\upsilon+\int_{\upsilon}^{\infty}C_{1}\exp\left\{ -C_{2}(\alpha-\upsilon)\rho_{l}t\right\} d\alpha=\upsilon+\frac{C}{t\rho_{l}}\;.
\]
Therefore, by \eqref{cov}, we have
\[
\mathbb{E}_{\boldsymbol{x}}^{\epsilon}\left[\widehat{\Delta}(t)\right]\le\upsilon t+\frac{C}{\theta_{\epsilon}\rho_{l}}=\upsilon t+C\exp\left(-\frac{H-h-3\eta}{\epsilon}\right)\;.
\]
Hence,
\[
\limsup_{\epsilon\rightarrow0}\sup_{\boldsymbol{x}\in\mathcal{V}_{\star}}\mathbb{E}_{\boldsymbol{x}}^{\epsilon}\left[\widehat{\Delta}(t)\right]\le\upsilon t\;.
\]
The proof is now completed by letting $\upsilon\rightarrow0$.
\end{proof}
Now we turn to the proof of Lemma \ref{kc}. Let us fix $\boldsymbol{x}\in\mathcal{B}(\boldsymbol{m},\,r_{0})$
for some $\boldsymbol{m}\in\mathcal{M}_{\star}$, and recall the cycle
structure $\mathcal{A}_{0}\subset\cdots\subset\mathcal{A}_{l+1}$
associated to $\boldsymbol{m}$. Recall the sequences of hitting times
$(\sigma_{i})$ and $(\tau_{i})$ from \eqref{ap0}. For each $i$,
we define a sequence of hitting times $\sigma_{i}=\tau_{i}^{(0)}\le\tau_{i}^{(1)}\le\cdots\le\tau_{i}^{(l+2)}=\tau_{i}$
recursively as,
\begin{align*}
 & \tau_{i}^{(k)}=\inf\{s\ge\tau_{i}^{(k-1)}:\boldsymbol{x}_{\epsilon}(s)\in\partial\mathcal{D}_{\star}\cup\partial\mathcal{A}_{k}\}\;\;;\;k\in\llbracket1,\,l+1\rrbracket\;,\\
 & \tau_{i}^{(l+2)}=\inf\{s\ge\tau_{i}^{(l+1)}:\boldsymbol{x}_{\epsilon}(s)\in\partial\mathcal{D}_{\star}\}\;.
\end{align*}
Now we write
\begin{equation}
\Delta^{(k)}(t)=\sum_{i=1}^{\nu(t)}(\tau_{i}^{(k+1)}-\tau_{i}^{(k)})\;\;;\;k\in\llbracket0,\,l+1\rrbracket\;.\label{lamk}
\end{equation}
With these notations, it suffices to prove the following lemma. For
convenience, we set $\rho_{l+1}:=\rho_{l}$.
\begin{lem}
\label{kcs}For all $k\in\llbracket0,\,l+1\rrbracket$ and $\upsilon\in(0,\,1)$,
there exist constants $C_{1}$, $C_{2}$ and $\epsilon_{1}=\epsilon_{1}(\upsilon)$
such that,
\[
\mathbb{P}_{\boldsymbol{x}}^{\epsilon}\left[\Delta^{(k)}(t)>\alpha t\right]\le C_{1}\exp\left\{ -C_{2}(\alpha-\upsilon)\rho_{k}t\right\} \;,
\]
for all $\alpha\in(\upsilon,\,1)$, $\epsilon\in(0,\,\epsilon_{0})$,
and $t>0$.
\end{lem}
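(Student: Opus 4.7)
My plan is a Chernoff bound at rate $\lambda := c_0\rho_k$ (with the convention $\rho_{l+1} := \rho_l$), combined with the exponential-moment estimates of Lemma \ref{pe1} applied to the excursion decomposition $\Delta^{(k)}(t) \le \sum_i X_i \mathbf{1}\{\tau_i^{(k)} \le t\}$ with $X_i := \tau_i^{(k+1)} - \tau_i^{(k)}$. The strong Markov property at $\tau_i^{(k)}$ together with \eqref{fw4} (for $k \in \llbracket 0, l\rrbracket$, applied at shifted index $k+1$) or \eqref{fw5} (for $k = l+1$) yields the per-increment bound $\mathbb{E}_{\boldsymbol{x}}^\epsilon[e^{\lambda X_i}\mid\mathscr{F}_{\tau_i^{(k)}}] \le C_0 \in \{2, 4\}$: at $\tau_i^{(k)}$ the process lies either on $\partial\mathcal{D}_\star$ (so $X_i = 0$) or on $\partial\mathcal{A}_k\subset\mathcal{A}_{k+1}\setminus\overline{\mathcal{D}_\star}$, where Lemma \ref{pe1} applies directly.

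The decisive refinement is that $X_i > 0$ requires the $i$-th excursion to reach $\partial\mathcal{A}_k$. By \eqref{fw2}, the probability of hitting $\partial\mathcal{A}_k$ within the time horizon $\exp((J_k + 3\eta)/\epsilon)$ is bounded by $1/64$; since this window contains $O(\rho_k^{-1})$ excursions (each of typical length $e^{\eta/\epsilon}$ due to the depth-$\eta$ barrier of the cycle $\mathcal{D}_\star$), a strong-Markov union bound across successive $\tau_i$'s yields the per-excursion activity bound $\mathbb{P}_{\boldsymbol{x}}^\epsilon[X_i > 0 \mid \mathscr{F}_{\tau_{i-1}}] \le C_3\rho_k$. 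Combined with the previous per-increment estimate,
\[
\mathbb{E}_{\boldsymbol{x}}^\epsilon\bigl[e^{\lambda X_i}\bigm|\mathscr{F}_{\tau_{i-1}}\bigr] \le 1 + (C_0-1)C_3\rho_k,
\]
so that $M_n := e^{\lambda\sum_{i=1}^n X_i}\bigl(1 + (C_0-1)C_3\rho_k\bigr)^{-n}$ is a supermartingale on the discrete filtration $\{\mathscr{F}_{\tau_n}\}_{n\ge 0}$.

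Applying Chernoff with this supermartingale on the event $\{\nu(t) \le C_6\, t\, e^{-\eta/\epsilon}\}$ then produces a bound of the form $C_1\exp\{-\rho_k t\,(c_0\alpha - C_8\, e^{-\eta/\epsilon})\}$, which has the required shape once $\epsilon < \epsilon_1(\upsilon)$ is small enough that $C_8\, e^{-\eta/\epsilon} < c_0\upsilon$. The complementary event $\{\nu(t) > C_6\, t\, e^{-\eta/\epsilon}\}$ is controlled via a Freidlin--Wentzell exponential-moment bound on the excursion durations $\tau_i - \tau_{i-1}$ at rate $\propto e^{-\eta/\epsilon}$ (the analogue of Lemma \ref{pe1} applied to the cycle $\mathcal{D}_\star$ itself), yielding $\mathbb{P}[\nu(t) > C_6\, t\, e^{-\eta/\epsilon}] \le C_1 e^{-C_7\, t\, e^{-\eta/\epsilon}}$; since $e^{-\eta/\epsilon} \gg \rho_k$, this decays faster than required and is therefore absorbed into the final bound. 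The chief obstacle is the rigorous derivation of the per-excursion activity estimate $\mathbb{P}[X_i > 0 \mid \mathscr{F}_{\tau_{i-1}}] \le C_3\rho_k$ from the cycle-level bound \eqref{fw2}, which demands careful bookkeeping of the excursion count within the Freidlin--Wentzell time window and iterated strong Markov to neutralize the dependence on the starting configuration of each excursion.
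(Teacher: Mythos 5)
Your approach is genuinely different from the paper's, and it has a real gap at precisely the step you flag as ``the chief obstacle.'' The paper never needs the per-excursion activity bound $\mathbb{P}_{\boldsymbol{x}}^\epsilon[X_i>0\mid\mathscr{F}_{\tau_{i-1}}]\le C_3\rho_k$. Instead it applies Chebyshev plus Cauchy--Schwarz to split $\mathbb{P}[\Delta^{(k)}(t)>\alpha t]$ over the events $\{\nu^{(k)}(t)=n\}$, where $\nu^{(k)}(t)$ counts only those excursions that actually reach $\partial\mathcal{A}_k$ by time $t$, and then bounds $\mathbb{P}[\nu^{(k)}(t)=n]$ for $n>\upsilon\rho_k t$ by a pigeonhole argument: if there are many such hits in $[0,t]$, at least half of the corresponding intervals $[\sigma_{i_m},\tau_{i_m}^{(k)}]$ are shorter than $2/(\upsilon\rho_k)$, and each short hit is improbable by the time-horizon bound \eqref{fw2}. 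This uses \eqref{fw2} exactly in the form in which it is stated, with no conversion to a per-trial probability and no need to control the total number of cycles $\nu(t)$.

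Your route, by contrast, needs two ingredients the paper avoids: a per-excursion hitting probability and a separate bound on $\nu(t)$. The derivation you sketch for the first does not go through. The length of a full cycle $\tau_i-\tau_{i-1}$ is governed by the escape from $\partial\mathcal{D}_\star$ (energy level $h+\eta$) to $\partial\mathcal{V}_\star=\partial\mathcal{B}(\boldsymbol{m},r_0)$, and since $\mathcal{D}_{\boldsymbol{m}}\subset\mathcal{B}(\boldsymbol{m},r_0)$ strictly, the relevant barrier is some $\kappa_0>\eta$ depending on $r_0$ and the Hessian of $U$ at $\boldsymbol{m}$, not $\eta$; so the cycle duration is of order $e^{\kappa_0/\epsilon}$, not $e^{\eta/\epsilon}$, and the time window of \eqref{fw2} contains of order $\exp((J_k+3\eta-\kappa_0)/\epsilon)$ cycles rather than $\rho_k^{-1}$ of them. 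This spoils the claimed per-excursion rate $\rho_k$ (the exponents simply do not match). Moreover, the conversion from the time-horizon bound \eqref{fw2} to a per-excursion bound is not a union bound: you need a lower bound on the number of cycles falling inside the window (uniformly over the random trajectory and over the starting point on $\partial\mathcal{V}_\star$, which is not even in $\mathcal{D}_\star$ where \eqref{fw2} is stated), combined with a geometric-trial argument. That is additional Freidlin--Wentzell machinery about exit-location probabilities, not something that follows from \eqref{fw2} alone. The supermartingale mechanism you set up downstream of this bound is fine, and the exponent mismatch could conceivably be absorbed when multiplied against a corrected bound on $\nu(t)$, but as written the central estimate is unsupported, and the paper's combinatorial route is both shorter and avoids the issue entirely.
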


\begin{proof}
We fix $k\in\llbracket0,\,l+1\rrbracket$. Observe first that $\tau_{i}^{(k+1)}-\tau_{i}^{(k)}\neq0$
if and only if $\boldsymbol{x}_{\epsilon}(\tau_{i}^{(k)})\in\partial\mathcal{A}_{k}$.
Denote by $\{i_{1},\,i_{2},\,\cdots\}$ the (random) set of $i$ such
that $\boldsymbol{x}_{\epsilon}(\tau_{i}^{(k)})\in\partial\mathcal{A}_{k}$,
and write $\nu^{(k)}(t)=\sup\{i:\tau_{i}^{(k)}\le t\}$. With these
notations, we can write
\[
\Delta^{(k)}(t)=\sum_{m=1}^{\nu^{(k)}(t)}(\tau_{i_{m}}^{(k+1)}-\tau_{i_{m}}^{(k)})\;.
\]
Then, by Chebyshev's inequality and Cauchy-Schwarz's inequality, we
obtain
\begin{align*}
\mathbb{P}_{\boldsymbol{x}}^{\epsilon}\left[\Delta^{(k)}(t)>\alpha t\right] & \le e^{-\lambda\alpha\rho_{k}t}\,\sum_{n=0}^{\infty}\mathbb{E}_{\boldsymbol{x}}^{\epsilon}\biggl[\exp\biggl\{\lambda\rho_{k}\sum_{m=1}^{\nu^{(k)}(t)}(\tau_{i_{m}}^{(k+1)}-\tau_{i_{m}}^{(k)})\biggr\}\mathbf{1}\left\{ \nu^{(k)}(t)=n\right\} \biggr]\\
 & \le e^{-\lambda\alpha\rho_{k}t}\,\sum_{n=0}^{\infty}\mathbb{E}_{\boldsymbol{x}}^{\epsilon}\biggl[\exp\biggl\{2\lambda\rho_{k}\sum_{m=1}^{n}(\tau_{i_{m}}^{(k+1)}-\tau_{i_{m}}^{(k)})\biggr\}\biggr]^{\frac{1}{2}}\mathbb{P}_{\boldsymbol{x}}^{\epsilon}\left[\nu^{(k)}(t)=n\right]^{\frac{1}{2}}\;.
\end{align*}
Now let $\lambda=c_{0}/2$ be the half of the constant that appeared
in Lemma \ref{pe1}. By the strong Markov property and Lemma \ref{pe1}
(we use \eqref{fw4} for $k\le l$ and \eqref{fw5} for $k=l+1$),
\[
\mathbb{E}_{\boldsymbol{x}}^{\epsilon}\biggl[\exp\biggl\{2\lambda\rho_{k}\sum_{m=1}^{n}(\tau_{i_{m}}^{(k+1)}-\tau_{i_{m}}^{(k)})\biggr\}\biggr]^{\frac{1}{2}}\le\sup_{\boldsymbol{y}\in\partial\mathcal{A}_{k}}\mathbb{E}_{\boldsymbol{y}}^{\epsilon}\left[\exp\left\{ 2\lambda\rho_{k}H_{\partial\mathcal{A}_{k+1}\cup\partial\mathcal{D}_{\star}}\right\} \right]^{\frac{n}{2}}<2^{\frac{n}{2}}\;.
\]
Summing up, we get
\begin{equation}
\mathbb{P}_{\boldsymbol{x}}^{\epsilon}\left[\Delta^{(k)}(t)>\alpha t\right]\le e^{-\lambda\alpha\rho_{k}t}\,\sum_{n=0}^{\infty}2^{\frac{n}{2}}\mathbb{P}_{\boldsymbol{x}}^{\epsilon}\left[\nu^{(k)}(t)=n\right]^{\frac{1}{2}}\;.\label{conn0}
\end{equation}

Now we estimate the probability $\mathbb{P}_{\boldsymbol{x}}^{\epsilon}\left[\nu^{(k)}(t)=n\right]$.
Fix $\upsilon>0$ and suppose that $n>\upsilon\rho_{k}t$. Conditioned
on the event $\{\nu^{(k)}(t)=n\}$, consider $n-1$ disjoint sub-intervals
of $[0,\,t]$:
\begin{equation}
[\sigma_{i_{1}},\,\tau_{i_{1}}^{(k)}],\,[\sigma_{i_{2}},\,\tau_{i_{2}}^{(k)}],\cdots,\,[\sigma_{i_{n-1}},\,\tau_{i_{n-1}}^{(k)}]\;.\label{esj}
\end{equation}
Note that the last interval $[\sigma_{i_{n}},\,\tau_{i_{n}}^{(k)}]$
is excluded since it is possible that $\tau_{i_{n}}^{(k)}>t$. Then,
since $n>\upsilon\rho_{k}t$, we can find $(n-1)/2$ intervals among
\eqref{esj} that have length at most $2/(\upsilon\rho_{k})$. Hence,
by the strong Markov property and \eqref{fw2}, there exists $\epsilon_{1}(\upsilon)>0$
such that
\begin{align*}
\mathbb{P}_{\boldsymbol{x}}^{\epsilon}\left[\nu^{(k)}(t)=n\right] & \le\sum_{S\subset\{i_{1},\,i_{2},\,\cdots,\,i_{n}\},\;|S|=\frac{n-1}{2}}\mathbb{P}_{\boldsymbol{x}}^{\epsilon}\biggl[\tau_{i}^{(k)}-\sigma_{i}\le\frac{2}{\upsilon\rho_{k+1}}\;\forall i\in S\biggr]\\
 & \le{n \choose (n-1)/2}\sup_{\boldsymbol{y}\in\mathcal{\mathcal{D}}_{\star}}\mathbb{P}_{\boldsymbol{y}}^{\epsilon}\biggl[H_{\partial\mathcal{A}_{k}}\le\frac{2}{\upsilon\rho_{k+1}}\biggr]^{\frac{n-1}{2}}\le{n \choose (n-1)/2}\frac{1}{8^{n-1}}\le\frac{1}{4^{n-1}}
\end{align*}
for all $\epsilon\in(0,\,\epsilon_{1}(\upsilon))$ and $n>\upsilon\rho_{k+1}t$.
Combining this computation with \eqref{conn0}, we get
\[
\mathbb{P}_{\boldsymbol{x}}^{\epsilon}\left[\Delta^{(k)}(t)>\alpha t\right]\le e^{-\alpha\beta\rho_{k+1}t}\,\biggl[\sum_{n=0}^{\upsilon\rho_{k+1}t}2^{\frac{n}{2}}+\sum_{n=\upsilon\rho_{k+1}t+1}^{\infty}2^{\frac{n}{2}}\frac{1}{4^{n-1}}\biggr]\le Ce^{-\alpha\beta\rho_{k+1}t}e^{\upsilon\rho_{k+1}t}\;.
\]
This completes the proof.
\end{proof}
\begin{acknowledgement*}
We wish to thank two anonymous reviewers  for directing our attention to several references, and valuable comments which have improved our manuscript.
I. Seo was supported by the National Research Foundation of Korea
(NRF) grant funded by the Korea government(MSIT) (No. 2018R1C1B6006896)
and (No. 2017R1A5A1015626). F. Rezakhanlou was supported in part by
NSF grant DMS-1407723.
\end{acknowledgement*}

\end{document}